\newcommand{\avg}[1]{\left\{\hspace{-2.1mm}\left\{ #1 \right\}\hspace{-2.1mm}\right\}}
\newcommand{\jp}[1]{\left[\!\!\left[ #1 \right]\!\!\right]}
\newcommand{\enorm}[1]{|\hspace{-0.3mm}|\hspace{-0.3mm}|#1|\hspace{-0.3mm}|\hspace{-0.3mm}|}
\newcommand{\dist}{\mathrm{dist}}
\NewDocumentCommand{\dgal}{sO{}m}{%
  \IfBooleanTF{#1}
    {\dgalext{#3}}
    {\dgalx[#2]{#3}}%
}
\NewDocumentCommand{\dgalext}{m}{%
  \sbox0{%
    \mathsurround=0pt 
    $\left\{\vphantom{#1}\right.\kern-\nulldelimiterspace$%
  }%
  \sbox2{\{}%
  \ifdim\ht0=\ht2
    \{\kern-.45\wd2 \{#1\}\kern-.45\wd2 \}%
  \else
  \fi
}
\NewDocumentCommand{\dgalx}{om}{%
  \sbox0{\mathsurround=0pt$#1\{$}%
  \sbox2{\{}%
  \ifdim\ht0=\ht2
    \{\kern-.45\wd2 \{#2\}\kern-.45\wd2 \}%
  \else
    \mathopen{#1\{\kern-.5\wd0 #1\{}
    #2
    \mathclose{#1\}\kern-.5\wd0 #1\}}
  \fi
}
\crefname{hypothesis}{Hypothesis}{Hypotheses}
\title{Superconvergent postprocessing of $C^0$ interior penalty method}
\author{Ying Cai\thanks{Beijing Computational Science Research Center, Beijing 100193, China (\email{ycai@csrc.ac.cn}).}
\and Hailong Guo\thanks{School of Mathematics and Statistics, The University of Melbourne, Parkville, VIC 3010, Australia (\email{hailong.guo@unimelb.edu.au}).}
\and Zhimin Zhang\thanks{Department of Mathematics, Wayne State University, Detroit, MI 48202, USA (\email{ag7761@wayne.edu}).}
}
\begin{document}

\maketitle

\begin{abstract}
This paper focuses on the superconvergence analysis of the Hessian recovery technique for the $C^0$ Interior Penalty Method (C0IP) in solving the biharmonic equation. We establish interior error estimates for C0IP method that serve as the superconvergent analysis tool. Using the argument of superconvergence by difference quotient, we prove superconvergent results of the recovered Hessian matrix on translation-invariant meshes. The Hessian recovery technique enables us to construct an asymptotically exact {\it a posteriori} error estimator for the C0IP method. Numerical experiments are provided to support our theoretical results.

\end{abstract}

\begin{keywords}
  Biharmonic equation, Hessian recovery, interior estimates, superconvergence, adaptive, {\it a posteriori} error estimate,  asymptotically exact.
  \end{keywords}

\begin{AMS}
 65N30, 65N25, 65N15, 65N50.
\end{AMS}

\section{Introduction}
\label{sec:int}

%

The biharmonic equation arises from many important applications of science and engineering areas, such as thin plate theory and fluid dynamics. Over the past century, there has been a dramatic increase in the developing finite element methods for the biharmonic equation (or general fourth-order elliptic problems). For the fourth-order elliptic partial differential equations (PDEs), the conforming finite element space is $C^1$. Famous examples include the  Argyris element and the Bell element \cite{Ci2002, BS2008}.  However, constructing such $C^1$  elements is rather complicated, and their implementation is far from straightforward.  To reduce computational costs, nonconforming elements, such as the Morley element \cite{Mo1968, LL1975, WangXu2006}, were proposed, decreasing the requirement of continuity by only necessitating weak continuity.  An alternative approach is to use mixed finite element methods\cite{CiRa1974, John1973}, which rewrite fourth-order PDEs into system of lower-order PDEs. Some other finite element methods include the discontinuous Galerkin method\cite{Baker1977} and the recovery based finite element methods\cite{CGZZ2017, GZZ2018}.

The idea of using a continuous finite element method to discretize fourth-order elliptic PDEs can be traced back to the 1970s. In their seminal work \cite{BaZl1973}, Babu\v{s}ka and Zl\'{a}mal proposed a C0IP method using cubic Hermite elements. However, the method is inconsistent, which leads to a suboptimal convergence rate. Eagles et al. proposed a consistent C0IP method in \cite{EGHLMT2002}. This method employs the Lagrange finite element space and weakly enforces continuity through stabilization. Its optimal convergence in the energy norm and $L^2$ norm was analyzed in \cite{BS2005}. Due to its flexibility and ease of implementation, it has become one of the most popular discretization methods for fourth-order elliptic PDEs.

Adaptive computation plays a fundamental role in modern scientific and engineering computing. One of the key ingredients of adaptive methods is {\it a posteriori} error estimators. Existing error estimators can be roughly divided into two categories: residual-type \cite{AO2000, BS2001, VR2013} and recovery-type. However, most studies in the field of {\it a posteriori} estimates for the C0IP method have only focused on residual-type error estimators \cite{BGSu2010, CNRS2023}.
A realistic approach to constructing recovery-type {\it a posterior} error estimators is to perform a superconvergent postprocess on finite element solutions. For second-order elliptic PDEs, the superconvergence of gradient recovery techniques and their applications in {\it a posteriori} error estimates have reached a mature stage, as seen in \cite{NZ2004, ZN2005, ZZ1992a}.  When dealing with fourth-order elliptic PDEs, a natural choice is Hessian recovery \cite{AgVa2002, PABG2011, GZZ2017b}.

Our primary goals are to establish the superconvergence theory of Hessian recovery proposed in \cite{GZZ2017b} for the C0IP method and use it to construct a simple and asymptotically exact \textit{a posterior} error estimator for the biharmonic equation.
 There are two possible directions to demonstrate the superconvergence of Hessian recovery:
one involves using existing superconvergence results, and the other relies on the argument of superconvergence by difference quotient \cite{Wa1995, ZN2005} and interior error estimates for function values. However, numerical tests indicate no supercloseness for the C0IP method. It appears that the second approach is the only way to establish the superconvergence theory of Hessian recovery for the C0IP method. Regarding pointwise error analysis, Leykekhman \cite{Le2021} estimated the global and local maximum norms of second-order derivatives for the C0IP method. To the best of our knowledge, there are no interior error estimates in function value for the C0IP method.

There are four contributions in this paper. First, we establish interior error estimates for the C0IP method in energy and $L^2$ norms. For second-order elliptic problems, this aspect has been addressed by Nitsche and Schatz in \cite{NS1974}, and we will follow the methodology outlined in \cite{NS1974}. By proving some interior duality estimates, we demonstrate that the finite element error in the energy norm can be controlled with the best order of accuracy of the discretization space provided over the entire domain, coupled with an error in the negative Sobolev norm, which represents a weak norm. Similar estimates are also developed in the $L^2$ norm.
Second, based on the interior energy error estimates, we conduct interior maximum norm error estimates. Note that the discrete bilinear form may not be coercive locally; similar to the technique in \cite{SW1977}, we also need to consider the enhanced bilinear form associated with a Neumann problem. Consequently, we first carry out the estimation on the enhanced problems, and subsequently obtain the estimates on the original problems. The core idea of our analysis lies in the local estimate in the discrete $W_1^2$ norm.
Third, utilizing the interior estimates we have developed, we prove superconvergence results of the Hessian recovery for translation-invariant meshes in $L^2$ and maximum norms. To the best of our knowledge, these are the first superconvergence results for the C0IP method.
Last but not least, we propose a recovery-based {\it a posteriori} error estimator for the C0IP method. Compared to existing residual-based 
{\it a posteriori} error estimators \cite{BGSu2010, CNRS2023}, it offers several advantages, including asymptotical exactness, robustness, and ease of implementation.

The rest of the paper is organized as follows:
In Section \ref{sec:biharmonic}, we introduce the biharmonic equations and their $C^0$ interior penalty discretization. Section \ref{sec:hessianrecovery} provides a brief review of the gradient recovery procedure and introduces the Hessian recovery method. The interior estimates in energy, $L^2$, and $L^\infty$ norms are presented in Section \ref{sec:interiorestimate}. The core theorems in this section are Theorem \ref{thm:locinest} and Theorem \ref{thm:locinfty}, which serve as crucial tools for conducting our superconvergence analysis.
Building upon the results in Section \ref{sec:interiorestimate}, Section \ref{sec:error} establishes the superconvergent result for the recovered Hessian matrix. Finally, in Section \ref{sec:numer}, we present  some numerical examples that confirm the good performance of our method.

Throughout this paper, the symbol $C$, with or without subscripts, is adopted to denote unspecified positive constants, and we use notation $x \lesssim y$ to denote $x\le Cy$.
We also assume $\kappa$ is a fixed positive constant,
serving as a parameter to delineate the separation between the boundaries of  specific domains.

\section{$C^0$ interior penalty method for biharmonic equations}
\label{sec:biharmonic}
The purpose of this section is to provide some background materials. We shall begin by giving a brief introduction to our model equation. This will be followed by the $C^0$ interior penalty discretization of the model problem.
\subsection{Biharmonic equations}
\label{ssec:biharmonic}

Let $\Omega$ be a convex polygonal domain with a Lipschitz boundary in $\mathbb{R}^2$. For any subdomain $\mathcal{A}$ of $\Omega$, let $\mathbb{P}_m(\mathcal{A})$ be the space of polynomials of degree no more than $m$ over $\mathcal{A}$, and let $n_m$ be the dimension of $\mathbb{P}_m(\mathcal{A})$, where $n_m=\frac{1}{2}(m+1)(m+2)$. Following the notation used in \cite{AR1975,Ci2002,BS2008}, let $W^k_p(\mathcal A)$ denote the Sobolev space on $\mathcal{A}$ with the norm $\|\cdot\|_{k, p, \mathcal{A}}$ and the seminorm $|\cdot|_{k, p, \mathcal{A}}$. When $p=2$, the Sobolev space $W^k_p(\mathcal A)$ is abbreviated as $H^k(\mathcal{A})$, and the subscript $p$ in the (semi)norm can be omitted.

A 2-index $\alpha$ is a pair of nonnegative integers, $\alpha_i$, and the length of $\alpha$ is denoted as $|\alpha|= \alpha_1 + \alpha_2$. Given a 2-index $\alpha$, the following notation is used for derivatives:
\begin{equation}\label{equ:partial}
D^{\alpha}u := \frac{\partial^{|\alpha|}u}{\partial x^{\alpha_1} \partial y^{\alpha_2} } .
\end{equation}
Here, $D^{k}$ denotes the tensor of all partial derivatives of order $k$ for any given nonnegative integer $k$. Specifically, $D^2u$ represents the Hessian matrix of $u$. The Hessian operator $H$ is defined as:
\begin{equation}\label{equ:hessian}
H =
\begin{pmatrix}
\frac{\partial^2}{\partial x^2 } & \frac{\partial^2}{\partial x\partial y }\\
\frac{\partial^2}{\partial y\partial x } & \frac{\partial^2}{\partial y^2 }
\end{pmatrix}.
\end{equation}

We consider the following fourth-order elliptic equation
\begin{equation}\label{equ:model}
	\begin{split}
			\Delta^2u = f,  & \quad x\in \Omega, \\
	 u = \frac{\partial  u}{\partial n}  = 0, & \quad  x\in \partial\Omega,
	\end{split}
\end{equation}
where $f \in L^2(\Omega)$ and $n$ is the united out normal vector of $\Omega$.

The variational formulation of \eqref{equ:model} is to seek $u\in H^2_0(\Omega)$  such that
\begin{equation}\label{equ:var}
	B(u, v) = (f, v)\quad \forall v\in H_0^2(\Omega),
\end{equation}
where the bilinear form is defined as
\begin{equation}
	B(u, v) = \int_{\Omega} D^2u:D^2vdx
\end{equation}
with $M:N$ being the  Frobenius produce of two matrices $M$ and $N$.  It is not hard to deduce that
\begin{equation}
	\|u\|_{2, \Omega} \lesssim \|f\|_{-2, \Omega}.
\end{equation}

\subsection{$C^0$ interior penalty discretization}  For any $0 < h < \frac{1}{2}$, let $\mathcal{T}_h$ be  a shape regular triangulation of  $\Omega$  with mesh size at most $h$, i.e.
\begin{equation*}
	\overline{\Omega} = \bigcup_{T\in\mathcal{T}_h} T.
\end{equation*}

For any integer $k\geq 2$, define the continuous finite element space $S_h$ of order $k$ as
\begin{equation}
	S_h = \left\{ v\in C^0(\overline{\Omega}): v|_{T} \in \mathbb{P}_k(T), \, \forall \, T\in \mathcal{T}_h \right\} \subset H^1(\Omega).
\end{equation}
To handle the boundary condition, let $S_h^0 = S_h \cap H_0^1(\Omega)$ be the subspace satisfying the homogeneous Dirichlet boundary condition.
Denote the nodal points of $S_h$ by $\mathcal{N}_h$.  The Lagrange basis function of $S_h$ is denoted as $\{\phi_{z}: z\in \mathcal{N}_h\}$ which satisfies $\phi_z(z') = \delta_{zz'}$ for $z,z' \in \mathcal{N}_h$.  Let $I_h$ denote the interpolation operator from $C^0(\overline{\Omega})$ to $S_h$. For any $v\in C^0(\overline{\Omega})$, we have
\begin{equation*}
	I_hv = \sum_{z\in \mathcal{N}_h} v(z)\phi_z.
\end{equation*}
Furthermore, the following interpolation error estimate holds \cite{BS2001, Ci2002}
\begin{equation}
	\|v-I_hv\|_{j, p, T}  \lesssim h^{s-j}\|v\|_{s, p, T}, \quad T\in\mathcal{T}_h,
\end{equation}
where $1\le p \le \infty$ and $j \le s \le k+1$.

To discrete the fourth-order equation, we introduce the following local $H^2$ space as
\begin{equation}\label{equ:localh2}
	H^2(\Omega, \mathcal{T}_h) = \left\{v\in L^2(\Omega): v_T = v|_T \in H^2(T), \quad
	\forall T \in \mathcal{T}_h  \right\}.
\end{equation}
Denote the set of edges in $\mathcal{T}_h$ as $\mathcal{E}_h$.  For any interior edge $E\in \mathcal{E}_h$, let $T_+$ and $T_-$ be the two adjacent elements sharing $E$ as a common edge and $n_E$ be the  unit outer normal vector of $E$ pointing from $T_-$ to $T_+$. For a function $v \in H^2(\Omega, \mathcal{T}_h)$, define  the averaging and jump of $v$ as
\begin{equation}
	\avg{\frac{\partial^2 v}{\partial n^2} } = \frac12\left(\left. \frac{\partial^2 v_{T_+}}{\partial n^2}\right|_E + \left.\frac{\partial^2 v_{T_-}}{\partial n^2}\right|_E\right)
\text{  and  }
	 \jp{\frac{\partial v}{\partial n} } =\left. \frac{\partial v_{T_+}}{\partial n}\right|_E - \left.\frac{\partial v_{T_-}}{\partial n}\right|_E.
\end{equation}
 The above definitions are independent of the choice of $n_E$.
For any boundary edge $E\in \mathcal{E}_h$, let $n_E$ be the unit normal pointing outside $\Omega$.  In that case, averaging and jump of $v$ is defined as
\begin{equation}
	\avg{\frac{\partial^2 v}{\partial n^2} } =  - \frac{\partial^2 v}{\partial n_E^2}
\text{ and }
	 \jp{\frac{\partial v}{\partial n} } = - \frac{\partial v}{\partial n_E}.
\end{equation}

Define the discrete bilinear form $B_h(\cdot, \cdot)$ as
\begin{equation}
\begin{split}
	B_h(v, w) = &\sum_{T\in\mathcal{T}_h} \int_T D^2v:D^2wdx
	+ \sum_{E\in\mathcal{E}_h}\int_E \avg{\frac{\partial^2 v}{\partial n^2}}\jp{\frac{\partial w}{\partial n} } ds+\\
	&\sum_{E\in\mathcal{E}_h}\int_E \jp{\frac{\partial v}{\partial n}}
	\avg{\frac{\partial^2 w}{\partial n^2}}ds + \sum_{E\in\mathcal{E}_h}\frac{\gamma}{|E|}\int_E\jp{\frac{\partial v}{\partial n}}
	\jp{\frac{\partial w}{\partial n}} ds,
\end{split}
\end{equation}
where $\gamma$ is the penalty parameter.

The C0IP method for the model problem \eqref{equ:model} is to find $u_h\in S_h^0$ such that
\begin{equation}\label{equ:c0ip}
	B_h(u_h, v_h) = (f, v_h) \quad \forall v_h \in S_h^0.
\end{equation}
Provided that $u\in H^{2+\alpha}(\Omega)$ for some $\alpha>\frac12$, we have the following  Galerkin orthogonal equality \cite[eq. (4.16)]{BS2005}
\begin{equation}\label{equ:galorth}
	B_h(u_h-u, v_h) = 0 \quad \forall v_h \in S_h^0.
\end{equation}

Define the mesh-dependent norm $\interleave \cdot  \interleave_{2, h}$ as
\begin{equation}
\enorm v_{2, h}^2:=\sum_{T \in \mathcal{T}_h}\left\|D^2 v\right\|_{0,T}^2+\sum_{E \in \mathcal{E}_h}
\left(|E|\left\|\avg{\frac{\partial^2 v}{\partial n^2}}\right\|_{0,E}^2+|E|^{-1}\left\|\jp{ \frac{\partial v}{\partial n} }\right\|_{0,E}^2\right).
\end{equation}
It can be shown that the discrete bilinear form $B_h(\cdot, \cdot)$ satisfies the coercivity
\begin{equation}
	\enorm{v_h}_{2, h}^2 \lesssim B_h(v_h,v_h)
\end{equation}
and continuity
\begin{equation}
	B_h(v_h, w_h) \lesssim \enorm{v_h}_{2, h}\enorm{w_h}_{2, h}.
\end{equation}
The Lax-Milgram lemma\cite{Ci2002, BS2008} implies the discrete variational problem \eqref{equ:c0ip} admits a unique solution $u_h$. Furthermore, we have the following optimal error estimate
\begin{equation}\label{equ:interperror}
\enorm{ u - u_h }_{2, h}   \lesssim h^{k-1} \|u\|_{k+1, \Omega}.
\end{equation}
For the convenience of our subsequent interior error analysis, on a subset  $\mathcal A\subset\Omega$, we introduce the following mesh dependent semi-norms
\begin{align*}
&\enorm{v}_{2,1,h,\mathcal A}:=\sum_{T\in\mathcal T_h}|v|_{2,1,T\cap \mathcal A}+\sum_{E\in\mathcal E_h}\int_{E\cap \mathcal A}\left(|E|\left|\avg{\frac{\partial^2 v}{\partial n^2}}\right|+\left|\jp{\frac{\partial v}{\partial n}}\right|\right),\\
&\enorm{v}_{2,h,\mathcal A}^2:=\sum_{T \in \mathcal{T}_h}\left\|D^2 v\right\|_{0,T\cap \mathcal A}^2+\sum_{E \in \mathcal{E}_h}\left(|E|\left\|\avg{\frac{\partial^2 v}{\partial n^2}}\right\|_{0,E\cap \mathcal A}^2
+\frac1{|E|}\left\|\jp{\frac{\partial v}{\partial n}}\right\|_{0,e\cap \mathcal A}^2\right),\\
&\enorm{v}_{2,\infty,h,\mathcal A}:=\max_{T\in\mathcal T_h}|v|_{2,\infty,T\cap \mathcal A}+\max_{E\in\mathcal E_h}\left\|\avg{\frac{\partial^2 v}{\partial n^2}}\right\|_{0,\infty,E\cap \mathcal A}\\
&\hspace{8cm}+\max_{E\in\mathcal E_h}|E|^{-1}\left\|\jp{\frac{\partial v}{\partial n}}\right\|_{0,\infty,E\cap \mathcal A}.
\end{align*}

\section{Hessian recovery method}
\label{sec:hessianrecovery}
To obtain superconvergent results for the C0IP method, we need to do some postprocessing on the numerical solution $u_h$. In this paper, we focus on the postprocessing of the Hessian matrix of $u_h$, as introduced in \cite{GZZ2017b}. To prepare for presenting the Hessian recovery operator, we briefly describe the gradient recovery operator in \cite{ZN2005}.

Let $G_h: S_h \rightarrow S_h\times S_h $ be the polynomial preserving recovery (PPR) operator in \cite{ZN2005}. The gradient recovery operator $G_h$ can be realized in the following three steps: (1) building local patches of elements; (2) adopting local recovery procedures; (3) assembling the local recovered data into a global formulation.

 For each $z$ in $\mathcal N_h$, we can construct a local element of patch $\mathcal K_z$ satisfying the rank condition in the sense of the least-square problem \eqref{equ:lsq} being uniquely solvable \cite{NZ2004}.
 Select nodal point set $\mathcal K_z\cap\mathcal N_h$ as our sample point set and denote the indexes of it by $\mathcal I_z$.
 Using such sampling points,  fit a  polynomial $p_z$ of degree $(k+1)$ at the vertex $z$ in the following least-squares sense:
\begin{equation}\label{equ:lsq}
	p_z=\mathop{\mathrm{arg\,min}}\limits_{p \in \mathbb{P}_{k+1}(\mathcal K_z)} \sum_{j \in\mathcal I_z}|p(z_j)-u_h(z_j)|^2.
\end{equation}
 Once we define the recovered gradient for all nodal point in $\mathcal{N}_h$, the global formulation of $G_h$ is
\begin{equation}\label{equ:ppr}
	G_hu_h = \sum_{z\in\mathcal{N}_h}(G_hu_h)(z)\phi_z.
\end{equation}

\begin{remark}\label{rem:sample}
	It is worth to mention that the selection of sampling points in $\mathcal K_z$ is flexible as mentioned in \cite{ZN2005}.  For the theoretical analysis propose, we select the sampling point has the same type as $z$.
For example, if $z$ is diagonal edge center, we select all sampling points are diagonal edge centers.
\end{remark}

The Hessian recovery operator $H_h: S_h\rightarrow S_h^2\times S_h^2$ is obtained by applying $G_h$ twice\cite{GZZ2017b}.  Rewrite $G_hu_h$ as
\begin{equation}\label{equ:grcomp}
	G_h u_h=\begin{pmatrix}
G_h^x u_h \\
G_h^y u_h
\end{pmatrix}.
\end{equation}
Define $H_hu_h$ as
\begin{equation}\label{equ:hr}
	H_h u_h =(G_h(G_h^x u_h)\,\, G_h(G_h^y u_h))=
 \begin{pmatrix}
G_h^x(G_h^x u_h) & G_h^x(G_h^y u_h) \\
G_h^y(G_h^x u_h) & G_h^y(G_h^y u_h)
\end{pmatrix}.
\end{equation}

Given a subset $\mathcal{A}$ of $\Omega$, let $S_h(\mathcal{A})$  be the restriction of functions in $S_h$ to $\mathcal{A}$ and $S_h^{0}(\mathcal{A})$ be the set of those functions in $S_h(\mathcal{A})$ with compact support in the interior of $\mathcal{A}$\cite{Wa1995}. Suppose  $\Omega_0 \subset \subset \Omega_1 \subset \subset \Omega_2 \subset \subset \Omega$ is separated by $d \geq \kappa h$ and $\ell$ be a direction, i.e., a unit vector in $\mathbb{R}^2$. Let $\tau$ be a parameter, which will typically be a multiple of $h$. Let $T_\tau^{\ell}$ denote translation by $\tau$ in the direction $\ell$, i.e.,
\[
T_\tau^{\ell} v(z)=v(z+\tau \ell),
\]
and for an integer $\nu$,
\[
T_{\nu \tau}^{\ell} v(z)=v(z+\nu \tau \ell).
\]
The finite element space $S_h$ is referred as  translation invariant \cite{Wa1995} by $\tau$ in the direction $\ell$ if
\begin{equation}\label{equ:traninvariant}
T_{\nu \tau}^{\ell} v \in S_h^{0}(\Omega) \quad \forall v \in S_h^{0}(\Omega_1),	
\end{equation}
for some integer $\nu$ with $|\nu|<M$. Equivalently, $\mathcal{T}_h$ is called a translation invariant mesh. As demonstrated in \cite{GZZ2017b},  the five popular triangular mesh patterns: regular, chevron, criss-cross, union-Jack, and equilateral patterns, are translation invariant.

For the Hessian recovery operator $H_h$, \cite{GZZ2017b} established the following consistency property:
\begin{theorem}\label{thm:pp}
    Let $u\in W_{\infty}^{k+2}(\mathcal K_z)$ and $u_I=I_hu$; then
    \begin{equation*}
      \|H u-H_hu_I\|_{0, \infty, \mathcal K_z}\lesssim h^{k}|u|
	_{k+2, \infty,\mathcal K_z}.
    \end{equation*}
    If  $z$ is a node of translation invariant mesh and a mesh symmetric center of the involved nodes and $u\in W_{\infty}^{k+3}(\mathcal K_z)$, then
    \begin{equation*}
	|(H u-H_hu_I)(z)|\lesssim h^{k+1}|u|
	_{k+3, \infty,\mathcal K_z}.
    \end{equation*}
Moreover, if
$u\in W_{\infty}^{k+4}(\mathcal K_z)$  and  $k$  is an even number,  then
    \begin{equation*}
	|(H u-H_hu_I)(z)|\lesssim h^{k+2}|u|
	_{k+4, \infty,\mathcal K_z}.
    \end{equation*}
\end{theorem}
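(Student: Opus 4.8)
Let me think about what this theorem is really claiming and how to prove it.

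The Hessian recovery operator $H_h$ is built by applying the gradient recovery operator $G_h$ twice. The gradient recovery $G_h$ works via polynomial-preserving recovery (PPR): at each node $z$, fit a polynomial $p_z \in \mathbb{P}_{k+1}(\mathcal{K}_z)$ by least squares to the nodal values of $u_h$ on a patch $\mathcal{K}_z$, then extract derivatives.

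So there are three estimates to prove, at three orders of accuracy:
1. $\|Hu - H_h u_I\|_{0,\infty,\mathcal{K}_z} \lesssim h^k |u|_{k+2,\infty,\mathcal{K}_z}$ — general consistency
2. $|(Hu - H_h u_I)(z)| \lesssim h^{k+1} |u|_{k+3,\infty,\mathcal{K}_z}$ — at symmetric centers
3. $|(Hu - H_h u_I)(z)| \lesssim h^{k+2} |u|_{k+4,\infty,\mathcal{K}_z}$ — at symmetric centers, $k$ even

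This is a consistency/approximation theorem about the recovery operator applied to the interpolant, not the finite element solution.

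**Key insight about PPR.** The foundational property of PPR is polynomial preservation: if $u \in \mathbb{P}_{k+1}$, then $G_h u_I = \nabla u$ exactly. This is because the least-squares fit of a polynomial's nodal values by a $\mathbb{P}_{k+1}$ polynomial recovers the polynomial itself (assuming the rank condition for unique solvability). This is the workhorse for all the estimates.

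Let me sketch the proof.

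**First estimate (order $h^k$):** The plan is to use a Taylor/scaling argument. Work on the reference patch $\hat{\mathcal{K}}$ obtained by scaling $\mathcal{K}_z$ by $1/h$. On the reference patch, the PPR operator is a fixed bounded linear operator (bounded because the least-squares problem is uniquely solvable with a condition number independent of $h$ on the reference configuration — this is the rank condition).

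By polynomial preservation, $G_h$ applied to the interpolant of any $\mathbb{P}_{k+1}$ polynomial reproduces its gradient exactly. Take the Taylor polynomial $T_{k+1} u$ of degree $k+1$ about $z$. Write:
$$Hu - H_h u_I = H(u - T_{k+1}u) + (H T_{k+1}u - H_h (T_{k+1}u)_I) + H_h((T_{k+1}u)_I - u_I).$$
Wait — I need to be careful. Let me reorganize around what $H_h = G_h \circ G_h$ does.

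Actually the cleaner route: since $H_h = G_h \circ G_h$, I'd first prove the analogous consistency estimate for $G_h$ alone:
$$\|\nabla u - G_h u_I\|_{0,\infty,\mathcal{K}_z} \lesssim h^{k+1} |u|_{k+2,\infty,\mathcal{K}_z},$$
which is the standard PPR gradient result from \cite{ZN2005, NZ2004}. Each application of $G_h$ takes a derivative, so loses one order of $h$ and needs one more derivative of regularity. Applying $G_h$ twice and tracking how errors propagate gives the $h^k$ Hessian estimate — but one must handle the fact that the second application of $G_h$ acts on the already-recovered (hence approximate) first gradient, not on the exact gradient.

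**The main obstacle.** The hard part is the error propagation through the double recovery. The second $G_h$ is applied to $G_h^x u_h$ and $G_h^y u_h$, which are themselves only superconvergent approximations to $\partial_x u$ and $\partial_y u$, and moreover they are $S_h$ functions interpolated at nodes, not the true interpolants of the exact derivatives. So I must control
$$\|G_h(G_h^x u_I) - \partial_x(\nabla u)\|$$
by splitting into $\|G_h(G_h^x u_I - (\partial_x u)_I)\|$ plus $\|G_h((\partial_x u)_I) - \nabla(\partial_x u)\|$. The second term is a direct single-recovery estimate. The first requires a stability (boundedness) bound for $G_h$ in $L^\infty$ on the patch — since $G_h$ involves differencing nodal values, $\|G_h w_h\|_{0,\infty} \lesssim h^{-1}\|w_h\|_{0,\infty}$ on the patch — combined with the gradient-recovery bound $\|G_h^x u_I - (\partial_x u)_I\|_{0,\infty} \lesssim h^{k+1}|u|_{k+2,\infty}$. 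The factor $h^{-1}$ eats one order, landing exactly at $h^k$. Getting these stability constants uniform in $h$ via the reference-patch scaling is the technical crux.

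**Higher-order estimates at symmetric centers.** For the second and third estimates, the gain comes from symmetry cancellation. The plan is to expand $u$ in a Taylor polynomial about the symmetric center $z$. When $z$ is a mesh-symmetric center, the PPR sampling points come in symmetric pairs, so the recovery formula annihilates certain Taylor terms: odd-order leftover terms cancel at the center. Thus when estimating $(Hu - H_h u_I)(z)$ pointwise, the leading error term that survives in the general estimate is killed by symmetry, yielding one extra order ($h^{k+1}$). For the third estimate, when $k$ is even there is an additional parity matching between the degree of the fitting polynomial and the symmetric structure that cancels yet another order, giving $h^{k+2}$. I would carry this out by writing the recovered Hessian at $z$ as a linear combination of nodal values with symmetric weights, subtracting the exact Hessian via Taylor expansion, and checking that the lowest-order non-cancelling monomial in the residual has the claimed degree. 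The delicate point is verifying the symmetry cancellation survives the double application of $G_h$, i.e., that the composite operator $H_h$ inherits enough of the symmetry to cancel the relevant terms — this parity bookkeeping through two recovery steps is where I expect the argument to be most intricate.
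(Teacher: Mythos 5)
First, note that the paper contains no proof of Theorem \ref{thm:pp} at all: it is quoted as a known result from \cite{GZZ2017b} (``\cite{GZZ2017b} established the following consistency property''), so there is no internal proof to compare your attempt against; your sketch has to be measured against the argument in that reference. For the first estimate your skeleton is essentially the standard one (polynomial preservation of PPR plus a stability/scaling bound, i.e.\ $\|H_h v_I\|_{0,\infty,\mathcal K_z}\lesssim h^{-2}\|v\|_{0,\infty,\mathcal K_z}$ and annihilation of the degree-$(k+1)$ Taylor polynomial), and your error-propagation version through $H_h=G_h\circ G_h$ also works, with one bookkeeping caveat: the term $G_h\bigl((\partial_x u)_I\bigr)-\nabla(\partial_x u)$ must be estimated with the \emph{order-$k$} consistency of $G_h$ (which only uses preservation of $\mathbb P_k\subset\mathbb P_{k+1}$), since the full $h^{k+1}$ rate would require $|u|_{k+3,\infty}$, exceeding the assumed $W^{k+2}_\infty$ regularity.

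The genuine gap is in the symmetry part, and it is not merely ``intricate bookkeeping'' — the mechanism you propose cannot prove two of the three claims. Point symmetry of the composite Hessian stencil at $z$ (coefficients even under the reflection $x\mapsto 2z-x$) annihilates only \emph{odd}-degree homogeneous Taylor terms centered at $z$. The first term beyond $\mathbb P_{k+1}$ is the homogeneous part of degree $k+2$, which is odd exactly when $k$ is odd; so your cancellation argument yields the $h^{k+1}$ bound only for odd $k$. For even $k$ — which is also covered by the second assertion and is the entire content of the third — the degree-$(k+2)$ terms are \emph{even} and pass through the even-symmetric Hessian stencil untouched. The missing ingredient is the ultraconvergence property of the intermediate gradient recovery: at a symmetric center the PPR stencil for a first derivative is \emph{antisymmetric}, so its error functional annihilates even functions, and hence for even $k$ the operator $G_h$ preserves $\mathbb P_{k+2}$ nodally; on a translation-invariant mesh this gives $G_h p_I=I_h\nabla p$ for all $p\in\mathbb P_{k+2}$, and the second application of $G_h$ then reproduces $Hp$ exactly, so the composite error functional vanishes on $\mathbb P_{k+2}$ (giving $h^{k+1}$ for even $k$) and, combined with the odd-degree $(k+3)$ cancellation from the symmetric Hessian stencil, on $\mathbb P_{k+3}$ (giving $h^{k+2}$). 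Your proposal conflates these two distinct parity mechanisms — antisymmetry of the gradient stencil killing even terms versus symmetry of the Hessian stencil killing odd terms — and without the gradient-ultraconvergence step the even-$k$ cases cannot be closed.
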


\section{Interior estimates for C0IP methods}
\label{sec:interiorestimate}
In this section, we perform the interior norm estimates for the C0IP method described in previous sections. In subsection \ref{sec:localenergyestimate}, we mainly consider the interior energy norm estimates. Based on the results in subsection \ref{sec:localenergyestimate}, we obtain the interior maximum norm estimates in subsection \ref{sec:localmaxestimate}, which primarily constitute our Theorem \ref{thm:locinfty}. Furthermore, Theorem \ref{thm:local} plays an extremely important role in the superconvergence analysis in Section \ref{sec:error}. The estimation techniques used in this section mainly rely on the methodology presented in \cite{NS1974} and \cite{SW1977,SW1995}.

In our analysis, it is necessary to frequently consider auxiliary problems on a local subdomain, and it is more appropriate to attach a Neumann-type boundary condition to the auxiliary problems considered on the local subdomain. In addition, observing that $B(\cdot,\cdot)$ and $B_h(\cdot,\cdot)$ are not coercive locally, for our convenience, we consider the following problem: Find $u\in H^2(\Omega)$ such that
\begin{equation}\label{eq:tildeB}
  \tilde B(u,v)=(f,v)\quad \forall v\in H^2(\Omega),
\end{equation}
where $\tilde B(u,v):=B(u,v)+K(u,v)$, with $K(u,v)=\int_\Omega uvdx$. The problem \eqref{eq:tildeB} is closed related to the plate equation with free boundary condition, see e.g, \cite{HL2002, MC2023}.
Correspondingly, the C0IP method for solving problem \eqref{eq:tildeB} is to seek $u_h\in S_h(\Omega)$ such that
 \begin{equation}\label{eq:dtildeB}
  \tilde B_h(u_h,v_h)=(f,v_h)\quad \forall v_h\in S_h(\Omega),
\end{equation}
where $\tilde B_h(u_h,v_h)=B_h(u_h,v_h)+K(u_h,v_h)$. The mesh-dependent (semi)norms associated to $\tilde B_h$ will be distinguished using superscript ``$\sim$''.

\subsection{Local error estimates in energy and $L^2$ norms}
\label{sec:localenergyestimate}
In this subsection, we consider interior error estimates in energy and $L^2$ norms.
Thought this subsection, we assume  $\Omega_0\subset\subset\Omega_1\subset\subset\Omega$,
 $u$ and $u_h\in S_h(\Omega_1)$ satisfy
\begin{equation}
  B_h(u-u_h,\chi)=0\quad\forall \chi\in S_h^0(\Omega_1).
\end{equation}
We denote $G_0\subset\subset G$, $\mathrm{dist}(G_0,\partial G)\gtrsim \kappa h$, are concentric spheres with $G\subset\subset\Omega_1$. To alleviate the notation, we set $e=u-u_h$. First, we consider the interior duality estimates.
\begin{lemma}\label{lem:dualone}
For any integer $s\geq -1$, let $\sigma=\min\{s+2, k-1\}$, we have
\begin{equation}\label{ineq:dualone}
  \|u-u_h\|_{-s,G_0}\lesssim h^\sigma\enorm{ u-\chi}_{2,h,G}+\|u-u_h\|_{-s-1,G}.
\end{equation}
\end{lemma}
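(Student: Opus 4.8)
The plan is to follow the classical interior duality argument of Nitsche and Schatz \cite{NS1974}, now adapted to the fourth-order discrete bilinear form $B_h$. The target is a negative-norm estimate on the small sphere $G_0$, which I will characterize by duality:
\[
\|u-u_h\|_{-s,G_0}=\sup_{\psi}\frac{(u-u_h,\psi)}{\|\psi\|_{s,G_0}},
\]
where the supremum runs over $\psi\in C_0^\infty(G_0)$ (or $H^s$ functions supported in $G_0$). For each such $\psi$, I would introduce the auxiliary biharmonic problem $\Delta^2 w=\psi$ with homogeneous boundary data, so that $B(w,v)=(\psi,v)$. By elliptic regularity on the convex domain, $w$ gains four derivatives over $\psi$, giving the shift estimate $\|w\|_{s+4,G}\lesssim\|\psi\|_{s,G_0}$ (with $\psi$ extended by zero outside $G_0$).

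\textbf{Key steps.} First I would rewrite the inner product $(e,\psi)=B(w,e)$ using the auxiliary problem, then pass from the continuous form $B$ to the discrete form $B_h$, using Galerkin orthogonality $B_h(e,\chi)=0$ for $\chi\in S_h^0(\Omega_1)$ to subtract a discrete interpolant $w_I$ of $w$:
\[
(e,\psi)=B_h(e,w)=B_h(e,w-w_I).
\]
The crucial difficulty is that $w_I$ need not lie in $S_h^0(\Omega_1)$—it is not compactly supported in the interior—so I cannot directly invoke orthogonality. This is precisely where the interior character of the estimate enters: I would introduce a cutoff function $\omega\in C_0^\infty(G)$ with $\omega\equiv 1$ on $G_0$ and split $w-w_I$ into a part multiplied by $\omega$ (which gives a genuine admissible test function, up to the usual commutator terms between $\omega$ and the finite element projection) and a remainder controlled by the weak norm $\|u-u_h\|_{-s-1,G}$ via a superapproximation argument. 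Applying continuity of $B_h$ in the mesh-dependent norm $\enorm{\cdot}_{2,h,G}$ then yields
\[
B_h(e,w-w_I)\lesssim\enorm{e}_{2,h,G}\,\enorm{w-w_I}_{2,h,G}\lesssim\enorm{e}_{2,h,G}\,h^{\sigma}\|w\|_{s+4,G},
\]
where the interpolation estimate \eqref{equ:interperror} supplies the factor $h^{\sigma}$ with $\sigma=\min\{s+2,k-1\}$ (the min reflecting that the polynomial degree $k$ caps the attainable order). Finally, replacing $\enorm{e}_{2,h,G}$ by $\enorm{u-\chi}_{2,h,G}$ for arbitrary $\chi$ uses the near-best-approximation property of $u_h$ in the energy norm, again localized through the cutoff.

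\textbf{Main obstacle.} I expect the genuinely hard part to be the handling of the commutator and superapproximation terms arising from the cutoff $\omega$ and the jump/average contributions in $B_h$. Unlike the second-order case, the C0IP form carries edge terms involving $\avg{\partial^2 v/\partial n^2}$ and $\jp{\partial v/\partial n}$, so the product rule applied to $\omega(w-w_I)$ generates lower-order boundary contributions that must be absorbed into the weaker negative norm $\|u-u_h\|_{-s-1,G}$ on the larger sphere. Controlling these requires delicate local trace and inverse inequalities compatible with $\enorm{\cdot}_{2,h,G}$, together with the superapproximation bound $\enorm{\omega\chi-I_h(\omega\chi)}_{2,h}\lesssim h\|\chi\|_{\cdots}$ for $\chi\in S_h$. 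Once these technical estimates are in place, combining them with the duality characterization and taking the supremum over $\psi$ delivers \eqref{ineq:dualone}.
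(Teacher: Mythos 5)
Your toolkit (duality, an auxiliary clamped biharmonic problem with the shift $\|w\|_{s+4,G}\lesssim\|f\|_{s,G}$, a cutoff, Galerkin orthogonality after subtracting an interpolant, absorption of commutator terms into $\|u-u_h\|_{-s-1,G}$) is exactly the Nitsche--Schatz methodology the paper follows, but your argument breaks at its very first identity, and the breakdown is structural. You write $(e,\psi)=B(w,e)=B_h(e,w-w_I)$ with $e=u-u_h$. The identity $B(w,v)=(\psi,v)$ for the clamped problem holds only for test functions $v$ satisfying $v=\partial v/\partial n=0$ on $\partial G$, and the error $e$ satisfies no boundary conditions there. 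Carrying out the elementwise integration by parts, one finds that $B_h(w,e)-(\psi,e)$ equals a boundary integral over $\partial G$ involving $\frac{\partial^2 w}{\partial n^2}\,\frac{\partial e}{\partial n}$ and $(\nabla\Delta w\cdot n)\,e$: the clamped conditions on $w$ kill $\jp{\partial w/\partial n}$ on $\partial G$ but not $\partial^2 w/\partial n^2$ or $\nabla\Delta w\cdot n$. These leftover terms involve traces of $e$ and $\nabla e$ on $\partial G$, which cannot be bounded by the negative norm $\|e\|_{-s-1,G}$ (negative Sobolev norms give no control of boundary traces), nor do they carry any power of $h$. Your cutoff enters too late to repair this: you use $\omega$ only to split $w-w_I$ and rescue Galerkin orthogonality, but by that point the uncontrollable boundary terms are already in play.

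The paper avoids this by cutting off the \emph{error} at the outset: it bounds $\|e\|_{-s,G_0}\le\|\omega e\|_{-s,G}=\sup_{f\in H_0^s(G)}(\omega e,f)/\|f\|_{s,G}$ with $\omega\in C_0^\infty(G')$, $\omega\equiv1$ on $G_0$, and pairs the dual solution $w$ against the compactly supported, continuous function $\omega e$, for which $B_h(w,\omega e)=(\omega e,f)$ holds with no boundary contribution on $\partial G$. The price is the commutator identity $B_h(\omega e,w)=B_h(e,\omega w)+\Xi(w)$, where $\Xi(w)$ collects the terms in which derivatives fall on $\omega$; Green's formula then moves all remaining derivatives off $e$, giving $|\Xi(w)|\lesssim\|e\|_{-s-1,G}\|w\|_{s+4,G}$, and only afterwards is Galerkin orthogonality invoked, in the form $B_h(e,\omega w)=B_h(e,\omega w-I_h(\omega w))$, which is legitimate precisely because $\omega w$ is compactly supported so that $I_h(\omega w)\in S_h^0(\Omega_1)$. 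The essential idea missing from your proposal is that the cutoff must be attached to $e$ (equivalently, the dual datum must be tested against $\omega e$), not to the approximation of the dual solution. A secondary omission: the lemma is stated for $s\ge-1$, and your duality characterization over $\psi$ supported in $G_0$ does not cover $s=-1$; the paper handles that case separately via $\|\omega e\|_{1,G}=\sup_{f\in H^{-1}(G)}(\omega e,f)/\|f\|_{-1,G}$ and an $H^3$-regular dual problem.
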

\begin{proof}
Let $G_0\subset\subset G'\subset\subset G$ be concentric spheres with \[\mathrm{dist}(G_0,\partial G')=\mathrm{dist}(G',\partial G_1)\gtrsim \kappa h,\]
and let $\omega\in C_0^\infty(G')$ with $\omega\equiv1 $ on $G_0$.
Then, for $s\geq0$, we observe that
\begin{equation}\label{ineq:dualone1}
  \|e\|_{-s,G_0}\leq \|\omega e\|_{-s,G}=\sup_{f\in H_0^s(G)}\frac{(\omega e,f)}{\|f\|_{s,G}}.
\end{equation}
We employ the auxiliary problem
\[\Delta^2 w=f\quad\text{ in } G,\quad w=\frac{\partial w}{\partial n}=0\quad\text{ on }\partial G,\]
then according regularity estimate, see Lemma \ref{lem:rgl}, we have $w\in H^{s+4}(G)\cap H_0^2(G)$ and
\begin{equation}\label{ineq:reg1}
  \|w\|_{s+4,G}\lesssim \|f\|_{s,G}.
\end{equation}
Note that $\omega e$ is continuous, an application of integration by parts leads to
\[B_h(w, \omega e)=(\omega e,f).\]
Then, \eqref{ineq:dualone1} and \eqref{ineq:reg1} give that
\begin{equation}\label{ineq:dualone1-1}
\|e\|_{-s,G_0}\lesssim \sup_{w \in H^{s+4}(G)}\frac{B_h(w, \omega e)}{\|w\|_{s+4,G}}.
\end{equation}
Using the product rule, we derive
\[D^2(\omega e)=\omega D^2e+2\nabla\omega\otimes \nabla e+eD^2\omega\]
and
\[D^2(\omega w)=\omega D^2w+2\nabla\omega\otimes \nabla w+wD^2\omega.\]
Note that $[\![w]\!]=[\![\frac{\partial w}{\partial n}]\!]=0$, we have the identity
\begin{equation}\label{ineq:dualone2}
  \begin{split}
       & B_h(\omega e,w)-B_h(e, \omega w)\\
  =    &\sum_{T\in \mathcal T_h}\int_{T\cap G}(2\nabla \omega\otimes\nabla e+eD^2\omega):D^2w-(2\nabla \omega\otimes\nabla w+wD^2\omega):D^2e\\
  &-\sum_{E\in\mathcal E_h}\int_{E\cap G}\avg{2\frac{\partial \omega}{\partial n}\frac{\partial w}{\partial n}+w\frac{\partial^2\omega}{\partial n^2}}\jp{\frac{\partial e}{\partial n}}.
  \end{split}
\end{equation}
We deduce by Green's formula that
\begin{equation}\label{ineq:dualone3}
\sum_{T\in \mathcal T_h}\int_{T\cap G}2\nabla \omega\otimes\nabla e:D^2w=-\sum_{T\in \mathcal T_h}\int_{T\cap G}2e\mathrm{div}(D^2w\nabla \omega)
\end{equation}
and
\begin{equation}\label{ineq:dualone4}
\begin{split}
&  \sum_{T\in \mathcal T_h}\int_{T\cap G}(2\nabla \omega\otimes\nabla w+wD^2\omega):D^2e
  +\sum_{E\in\mathcal E_h}\int_{E\cap G}\avg{2\frac{\partial \omega}{\partial n}\frac{\partial w}{\partial n}+w\frac{\partial^2\omega}{\partial n^2}}\jp{\frac{\partial e}{\partial n}}\\
&= - \sum_{T\in \mathcal T_h}\int_{T\cap G}\mathrm{divdiv}(2\nabla \omega\otimes\nabla w+wD^2\omega)e.
\end{split}
\end{equation}
Therefore, from \eqref{ineq:dualone2}-\eqref{ineq:dualone4}, we claim that
\begin{equation}\label{ineq:dualone5}
\begin{split}
B_h(\omega e,w)&=B_h(e, \omega w)+\\
&\sum_{T\in \mathcal T_h}\int_{T\cap G}(-2\mathrm{div}(D^2w\nabla \omega)+D^2\omega:D^2w+\mathrm{divdiv}(2\nabla \omega\otimes\nabla w+wD^2\omega))e\\
:&=B_h(e, \omega w)+\Xi(w).
\end{split}
\end{equation}
We can derive  that $|\Xi(w)|\lesssim \|e\|_{-s-1,G}\|w\|_{s+4,G}$ from $\omega\in C_0^\infty(G')$. For $B_h(e, \omega w)$, according to the Galerkin orthogonality and the approximation theory, one obtains
\begin{equation}\label{ineq:dualone6}
\begin{split}
B_h(e, \omega w)&=B_h(e, \omega w-I_h(\omega w))\\&
\lesssim \enorm e_{2,h,G}\enorm{ \omega w-I_h(\omega w)}_{2,h,G}\lesssim h^\sigma\enorm e_{2,h,G}\|w\|_{s+4,G}.
\end{split}
\end{equation}
Then \eqref{ineq:dualone1-1} and \eqref{ineq:dualone6} give the lemma with $s\geq0$.

Similarly, for $s=-1$, note that $\omega e\in H_0^1(G)$, we have
\begin{equation}\label{ineq:dualtwo1}
  \|e\|_{1,G_0}\leq \|\omega e\|_{1,G}=\sup_{f\in H^{-1}(G)}\frac{(\omega e,f)}{\|f\|_{-1,G}}.
\end{equation}
Consider the dual problem
\[\Delta^2\tilde  w=f\quad\text{ in } G,\quad\tilde w=\frac{\partial\tilde w}{\partial n}=0\quad\text{ on }\partial G,\]
then $\tilde w\in H^3(G)\cap H_0^2(G)$ and $\|\tilde w\|_{3,G}\lesssim \|f\|_{-1,G}$. By the same manipulation as above, we obtain  \eqref{ineq:dualone5} holds with $\tilde w$ and
$|\Xi(\tilde w)|\lesssim \|e\|_{0,G}\|\tilde w\|_{3,G}$. On the other hand,
\begin{align*}
B_h(e, \omega\tilde w)&=B_h(e, \omega\tilde w-I_h(\omega\tilde w))\\&
\lesssim \enorm e_{2,h,G}\enorm{ \omega \tilde w-I_h(\omega\tilde w)}_{2,h,G}\lesssim
h^\sigma\enorm e_{2,h,G}\|\tilde w\|_{3,G},
\end{align*}
which combines \eqref{ineq:dualtwo1} completes the proof.
\end{proof}

\begin{lemma}\label{lem:localerror}
For any nonnegative integers $s$ and $q$, there holds that
\begin{equation}\label{ineq:localerror}
\|e\|_{1,G_0}\lesssim h\enorm e_{2,h,G}+\|e\|_{-s,q,G}.
\end{equation}
\end{lemma}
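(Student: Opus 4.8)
The plan is to iterate the interior duality estimate of Lemma \ref{lem:dualone}, at each stage trading the single $H^1$-derivative on the inner ball for one more negative Sobolev index on a slightly larger ball, and then to finish with a localization argument that converts the resulting $L^2$-based negative norm into the $L^q$-based one demanded by the statement. Because $s$ is a fixed integer, I begin by inserting a finite chain of concentric spheres
\[G_0\subset\subset G^{(1)}\subset\subset\cdots\subset\subset G^{(s+1)}\subset\subset G,\]
consecutive members separated by a distance $\gtrsim\kappa h$; since $G\subset\subset\Omega_1$, every cutoff used below is supported where the Galerkin orthogonality hypothesis $B_h(e,\chi)=0$ applies, and all hidden constants are permitted to depend on the fixed $s$.

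For the base step I apply Lemma \ref{lem:dualone} with index $-1$, for which $\sigma=\min\{1,k-1\}=1$ since $k\geq2$, and I choose the free comparison function $\chi=u_h$ so that $\enorm{u-\chi}_{2,h,\cdot}=\enorm{e}_{2,h,\cdot}$. This yields
\[\|e\|_{1,G_0}\lesssim h\enorm{e}_{2,h,G^{(1)}}+\|e\|_{0,G^{(1)}},\]
which already isolates the energy term at the optimal power $h^1$. I then iterate: for $j=0,1,\dots,s-1$, applying Lemma \ref{lem:dualone} with index $j$ on the pair $G^{(j+1)}\subset\subset G^{(j+2)}$ (again with $\chi=u_h$) gives
\[\|e\|_{-j,G^{(j+1)}}\lesssim h^{\sigma_j}\enorm{e}_{2,h,G^{(j+2)}}+\|e\|_{-j-1,G^{(j+2)}},\qquad \sigma_j=\min\{j+2,k-1\}\geq1.\]
Because every $\sigma_j\geq1$ and $h<\tfrac12$ we have $h^{\sigma_j}\leq h$, and because every $G^{(i)}\subset\subset G$ the seminorm $\enorm{e}_{2,h,G^{(i)}}$ is dominated by $\enorm{e}_{2,h,G}$; chaining these inequalities collapses all energy contributions into a single $h\enorm{e}_{2,h,G}$ while lowering the negative index to $-s$, so that
\[\|e\|_{1,G_0}\lesssim h\enorm{e}_{2,h,G}+\|e\|_{-s,G^{(s+1)}}.\]

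It remains to replace the $L^2$-based negative norm $\|e\|_{-s,G^{(s+1)}}$ by $\|e\|_{-s,q,G}$. Writing the former by duality against $\phi\in C_0^\infty(G^{(s+1)})$, extending each such $\phi$ by zero to $G$, and invoking a H\"older/Sobolev-embedding comparison on the bounded subdomain to bound $\|\phi\|_{s,q',G}$ (with $1/q+1/q'=1$) by $\|\phi\|_{s,G^{(s+1)}}$, one obtains $(e,\phi)\leq\|e\|_{-s,q,G}\|\phi\|_{s,q',G}\lesssim\|e\|_{-s,q,G}\|\phi\|_{s,G^{(s+1)}}$; taking the supremum over $\phi$ finishes the proof.

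I expect this last $L^2$-to-$L^q$ passage to be the main obstacle, since its validity hinges on the precise definition of the $L^q$-based negative norm and on whether the underlying integrability comparison on the subdomain holds uniformly across the full range of $q$ asserted in the statement: the inequality $\|\phi\|_{s,q',G}\lesssim\|\phi\|_{s,G^{(s+1)}}$ is immediate from H\"older when $q'\leq2$, i.e.\ $q\geq2$, but for smaller $q$ it requires an additional Sobolev embedding and possibly a shift of the intermediate index. By contrast, tracking the dependence of the constants on the fixed $s$ through the finite chain of balls is entirely routine.
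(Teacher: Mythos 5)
Your iteration of Lemma \ref{lem:dualone} is structurally the same as the paper's own proof (the paper likewise takes the comparison function to be $u_h$, uses $h^{\sigma_j}\le h$, and absorbs all energy terms into a single $h\enorm{e}_{2,h,G}$), so everything up to the bound $\|e\|_{1,G_0}\lesssim h\enorm{e}_{2,h,G}+\|e\|_{-s,G^{(s+1)}}$ is sound. The genuine gap is exactly the step you flagged at the end: converting the $L^2$-based negative norm into $\|e\|_{-s,q,G}$ at the \emph{same} negative order $s$. By duality this requires $\|\phi\|_{s,q',G}\lesssim\|\phi\|_{s,G^{(s+1)}}$ with $1/q+1/q'=1$, and for $1\le q<2$ (so $q'>2$) this inequality is false in general: H\"older goes the wrong way, and the two-dimensional Sobolev embedding only gives $\|\phi\|_{s,q'}\lesssim\|\phi\|_{s+1,2}$ for finite $q'$ (and $\|\phi\|_{s,\infty}\lesssim\|\phi\|_{s+2,2}$ when $q=1$), i.e., it costs at least one extra derivative. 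Since the lemma is asserted for all nonnegative integers $q$, including $q=1$, your argument as written only establishes the claim for $q\ge 2$.

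The repair is not a sharper version of that last comparison but a different stopping point for the iteration, and this is precisely the paper's device: first choose an integer $p$ such that $W^{-s,q}(G)\hookrightarrow H^{-p}(G)$ --- for $q<2$ this forces $p\ge s+1$, and $p\ge s+2$ when $q=1$ --- then insert $p$ (rather than $s+1$) intermediate spheres and apply Lemma \ref{lem:dualone} all the way down to $\|e\|_{-p,G}$. The final conversion $\|e\|_{-p,G}\lesssim\|e\|_{-s,q,G}$ is then a legitimate embedding between negative-order spaces (dual to $H_0^p\hookrightarrow W_0^{s,q'}$), not a same-order integrability comparison. Your chain of spheres accommodates this with no other change, since every exponent $\sigma_j\ge 1$ still yields the single factor $h$; but as submitted, the proof does not cover the full range of $q$ in the statement.
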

\begin{proof}
We choose integral $p$ such that  $W^{-s,q}(G)\hookrightarrow H^{-p}(G)$, let $G_0\subset\subset G_1\subset\subset\cdots\subset\subset G_{p}=G$ be concentric spheres
with the distance of any two adjacent spheres is greater than $\kappa h$, then we use Lemma \ref{lem:dualone} $p+2$ times to obtain
  \begin{equation}\label{ineq:localerror1}
    \begin{split}
      \|e\|_{1,G_0}&\lesssim h\enorm{e}_{2,h,G_1}+\|e\|_{0,G_1}\\
      &\lesssim h\enorm{e}_{2,h,G_1}+h\enorm{e}_{2,h,G_2}+\|e\|_{-1,G_2}\\
      &\lesssim\cdots\lesssim h\enorm{e}_{2,h,G}+\|e\|_{-p,G}\lesssim h\enorm e_{2,h,G}+\|e\|_{-s,q,G}.
    \end{split}
  \end{equation}
 The proof is completed.
\end{proof}

After obtaining the interior estimate Lemma \ref{lem:localerror}, we can proceed with the interior error estimates in energy norm.
For this purpose, we first consider a special case: $B_h(u_h,v_h)=0$, for any $ v_h\in S_h^0(\Omega_1)$, specifically, we state the following lemma.
\begin{lemma}\label{lem:homest}
Let $u_h\in S_h(\Omega_1)$ satisfies
\begin{equation}\label{eq:homest}
B_h(u_h,v_h)=0 \quad \forall v_h\in S_h^0(\Omega_1),
\end{equation}
and let $G_0\subset\subset G\subset\subset \Omega_1$ be concentric spheres. Then for $h$ small enough, we have
\begin{equation}\label{ineq:homest}
  \enorm{u_h}_{2,h,G_0}^\sim\lesssim h\enorm{u_h}_{2,h,G}^\sim+\|u_h\|_{-s,q,G}.
\end{equation}
\end{lemma}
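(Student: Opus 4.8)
The plan is to imitate the Nitsche--Schatz / Schatz--Wahlbin interior-energy machinery, localizing with a smooth cut-off and replacing the locally non-coercive form $B_h$ by the globally coercive enhanced form $\tilde B_h$ (this is precisely why the auxiliary form $K$ and the ``$\sim$''-norm were introduced). Fix concentric spheres $G_0\subset\subset G'\subset\subset G$ and a cut-off $\omega\in C_0^\infty(G')$ with $\omega\equiv1$ on a neighbourhood of $\overline{G_0}$. Since $\omega\equiv1$ on $G_0$ we have $\enorm{u_h}_{2,h,G_0}^\sim\le\enorm{\omega u_h}_{2,h}^\sim$, and by the coercivity of $\tilde B_h$ in the ``$\sim$''-norm (which continues to hold for the piecewise-smooth function $\omega u_h$, with a constant depending on $\omega$) it suffices to bound $\tilde B_h(\omega u_h,\omega u_h)$.

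First I would move one cut-off factor using the commutator identity $\tilde B_h(\omega u_h,\omega u_h)=\tilde B_h(u_h,\omega^2 u_h)+\mathcal C(\omega;u_h)$. A direct computation with $D^2(\omega v)=\omega D^2v+2\nabla\omega\otimes\nabla v+vD^2\omega$ shows that the top-order contribution $\omega^2|D^2u_h|^2$ cancels, so the commutator $\mathcal C$ (and the analogous edge commutators) involves only first- and zeroth-order derivatives of $u_h$; it is controlled by $\|u_h\|_{1,G}\,\enorm{u_h}_{2,h,G}+\|u_h\|_{1,G}^2$. For the remaining term I would interpolate to restore discreteness and then invoke Galerkin orthogonality: writing $\tilde B_h(u_h,\omega^2u_h)=\tilde B_h(u_h,I_h(\omega^2u_h))+\tilde B_h(u_h,\omega^2u_h-I_h(\omega^2u_h))$, the first piece equals $K(u_h,I_h(\omega^2u_h))$ because $I_h(\omega^2u_h)\in S_h^0(\Omega_1)$ is annihilated by $B_h(u_h,\cdot)$, and this is an $L^2$ quantity bounded by $\|u_h\|_{0,G}^2$. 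The second piece is a super-approximation error: since $\omega(x_T)u_h$ is reproduced by $I_h$ on each element $T$, one obtains $\enorm{\omega^2u_h-I_h(\omega^2u_h)}_{2,h}\lesssim h\,\enorm{u_h}_{2,h}$, with this error supported on the shell $\mathrm{supp}\,\nabla\omega$.

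Collecting these bounds yields a one-step estimate in which $\enorm{u_h}_{2,h,G_0}^\sim$ is controlled by a small multiple of the energy on a slightly larger sphere (the smallness coming from Young's inequality together with the power of $h$ furnished by super-approximation) plus the lower-order quantities $\|u_h\|_{1,G}$ and $\|u_h\|_{0,G}$. Iterating this estimate over a finite chain of concentric spheres $G_0\subset\subset G_1\subset\subset\cdots\subset\subset G$, as in Nitsche--Schatz, drives the energy contribution on the right below threshold and leaves $\enorm{u_h}_{2,h,G_0}^\sim\lesssim h\enorm{u_h}_{2,h,G}+\|u_h\|_{1,G}$. Finally, because $u_h$ itself satisfies the homogeneous relation $B_h(u_h,\chi)=0$, Lemma~\ref{lem:localerror} applies with $u=0$ and $e=u_h$, giving $\|u_h\|_{1,G}\lesssim h\enorm{u_h}_{2,h,G'}+\|u_h\|_{-s,q,G'}$; substituting this injects both the factor $h$ and the weak negative norm and completes the proof, with $h$ small enough to absorb the higher-order pieces.

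The crux, and where I expect the real work to lie, is twofold. One must handle the local non-coercivity of $B_h$ through $\tilde B_h$ while verifying that the term surviving Galerkin orthogonality is only of $L^2$ (hence strictly lower) order rather than energy order. More seriously, the commutator and super-approximation terms naturally live on the shell $\mathrm{supp}\,\nabla\omega$ and therefore carry the energy norm on an \emph{intermediate} domain, not on $G_0$; the crude single-cut-off bound only gains a fractional power of $h$, and extracting the sharp factor $h$ is exactly what forces the nested-sphere (superposition) iteration. The entire argument must be run with constants uniform along the chain, and with Young's inequality arranged so that every energy norm on a larger sphere is weighted by a factor that strictly controls the recursion, since such terms can never be absorbed directly into the left-hand side.
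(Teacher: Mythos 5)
Your toolkit is largely the right one (cut-off, commutator, Galerkin orthogonality plus super-approximation, and Lemma~\ref{lem:localerror} with $u=0$ to convert $\|u_h\|_{1}$ into $h\enorm{u_h}_{2,h}+\|u_h\|_{-s,q}$), but the structural decision to prove the bound through the coercivity estimate $\enorm{\omega u_h}^{2}\lesssim\tilde B_h(\omega u_h,\omega u_h)$ creates a gap that your nested-sphere iteration cannot close. Because this estimate is \emph{quadratic} in $u_h$, the commutator necessarily contributes the cross term $\enorm{u_h}_{2,h,G'}\|u_h\|_{1,G'}$: for the fourth-order form the volume term $\int u_h D^2u_h:(\nabla\omega\otimes\nabla\omega)$ and the edge terms of the type $\sum_E\int_E u_h(\partial\omega/\partial n)^2\jp{\partial u_h/\partial n}$ do not cancel, unlike the second-order Nitsche--Schatz commutator, which is purely zeroth order. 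After inserting Lemma~\ref{lem:localerror}, your one-step inequality has the form $A_0^2\lesssim hA_1^2+A_1N+N^2$, where $A_j$ denotes the energy on the $j$-th sphere and $N$ the weak norm. Young's inequality then forces a choice: with parameter $h$ you get $A_0\lesssim h^{1/2}A_1+h^{-1/2}N$, and iterating gives at best $A_0\lesssim hA_m+h^{-1/2}N$, the factor $h^{-1/2}$ on the weak norm never disappearing; with a fixed parameter $\epsilon$ you get $A_0\lesssim(\sqrt h+\sqrt\epsilon)A_1+C_\epsilon N$, and finitely many iterations only produce a fixed constant $\epsilon^{m/2}$, never $\mathcal O(h)$, in front of the energy. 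Indeed the recursion $A_j^2\lesssim hA_{j+1}^2+A_{j+1}N+N^2$ genuinely does not imply $A_0\lesssim hA_m+N$: the assignment $N=1$, $A_j=h^{-2^{-(m-j)}}$ satisfies every inequality of the recursion, yet $A_0=h^{-2^{-m}}\to\infty$ while $hA_m+N=2$. Either outcome is strictly weaker than \eqref{ineq:homest}, and the loss is fatal downstream: Lemma~\ref{lem:locallemma} iterates \eqref{ineq:homest} exactly $p+1$ times to accumulate the factor $h^{p+1}$, which a fixed-$\epsilon$ or $h^{-1/2}$-polluted version cannot deliver.

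The paper avoids this by keeping every estimate \emph{linear} in $u_h$. It introduces the Galerkin projection $\tilde P_h\tilde u_h\in S_h^0(G)$ of $\tilde u_h=\omega u_h$ with respect to $\tilde B_h$, splits $\enorm{u_h}^\sim_{2,h,G_0}\lesssim\enorm{\tilde u_h-\tilde P_h\tilde u_h}^\sim_{2,h,G}+\enorm{\tilde P_h\tilde u_h}_{2,h,G}$, handles the first term by C\'ea plus super-approximation (Lemma~\ref{lem:app}), and bounds the second by applying coercivity only to the \emph{discrete} function $\tilde P_h\tilde u_h$, i.e.\ by testing against the normalized function $\eta=\tilde P_h\tilde u_h/\enorm{\tilde P_h\tilde u_h}^\sim_{2,h,G}$. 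The commutator identity then reads $B_h(\tilde u_h,\eta)\lesssim B_h(u_h,\omega\eta)+\|u_h\|_{1,G''}\enorm{\eta}_{2,h,G}$ with $\enorm{\eta}_{2,h,G}^\sim=1$, so the dangerous term enters only to the first power; Galerkin orthogonality and super-approximation give $B_h(u_h,\omega\eta)\lesssim h\enorm{u_h}_{2,h,G}$, and Lemma~\ref{lem:localerror} converts $\|u_h\|_{1,G''}$ with no Young's inequality and hence no loss. A secondary point: your very first step assumes coercivity of the C0IP form at the non-discrete function $\omega u_h$; this relies on inverse/trace inequalities valid for polynomials and holds for $\omega u_h$ only up to lower-order perturbations, which would need proof. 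That issue is repairable; the cross-term issue is not, within your framework.
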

\begin{proof}
For any continuous function $\phi$, the Galerkin projection $\tilde P_h\phi$ of $\phi$ onto $S_h^0(G)$ is determined by
\[\tilde B_h(\tilde P_h\phi,\chi)=\tilde B_h(\phi,\chi)\quad\forall \chi\in S_h^0(G).\]
Thanks to Lax-Milgram Theorem, we know $\tilde P_h\phi$ is well defined.
Let $G_0\subset\subset G'\subset\subset G''\subset\subset G$ be concentric spheres with \[\mathrm{dist}(G_0,\partial G')=\mathrm{dist}(G',\partial G'')=\mathrm{dist}(G'',\partial G)\geq\kappa h.\]
We set cut-off function $\omega\in C_0^\infty(G')$ with $\omega\equiv1$ on $G_0$. We denote $\tilde u_h=\omega u_h$, then the
triangle inequality and the piecewise Poincar\'e-Friedrichs inequality \cite{BWZ2004} imply that
\begin{equation}\label{ineq:homest1}
 \enorm{u_h}_{2,h,G_0}^\sim\leq \enorm{\tilde u_h}_{2,h,G}^\sim
 \lesssim \enorm{\tilde u_h-\tilde P_h\tilde u_h}_{2,h,G}^\sim+\enorm{\tilde P_h\tilde u_h}_{2,h,G}.
\end{equation}
To handle the first term in the righthand of above inequality, we assert by \eqref{ineq:app1} that
\begin{equation}\label{ineq:homest2}
 \enorm{\tilde u_h-\tilde P_h\tilde u_h}_{2,h,G}^\sim\lesssim \inf_{\xi\in S_h^0(G)}\enorm{\tilde u_h-\xi}_{2,h,G}^\sim\lesssim h\enorm{u_h}_{2,h,G}^\sim.
\end{equation}
Next we treat the $\enorm{\tilde P_h\tilde u_h}_{2,h,G}$ in \eqref{ineq:homest1}. For any $\eta\in S_h^0(G)$, let $\tilde \eta=\omega\eta$, we compute the difference of
$B_h(\tilde u_h,\eta)$ and $B_h(u_h,\tilde\eta)$. Utilizing the product rule, we have
\begin{equation}\label{eq:omegauh}
\begin{split}
  B_h(\tilde u_h,\eta)=&\sum_{T\in\mathcal T_h}\int_{T\cap G}(\omega D^2u_h+2\nabla\omega\otimes\nabla u_h+u_hD^2\omega):D^2\eta+\\
  &\sum_{E\in\mathcal E_h}\int_{E\cap G}\avg{\omega\frac{\partial^2u_h}{\partial n^2}+
  2\frac{\partial\omega}{\partial n}\frac{\partial u_h}{\partial n}+u_h\frac{\partial^2\omega}{\partial n^2}}\jp{\frac{\partial\eta}{\partial n}}+\\
  &\sum_{E\in\mathcal E_h}\int_{E\cap G}\left(\omega\avg{\frac{\partial^2\eta}{\partial n^2}}\jp{\frac{\partial u_h}{\partial n}}+\frac{\gamma}{|E|}
  \jp{\frac{\partial u_h}{\partial n}}\jp{\frac{\partial \eta}{\partial n}}\right)
  \end{split}
\end{equation}
and
\begin{equation}\label{eq:omegaeta}
\begin{split}
  B_h(u_h,\tilde \eta)=&\sum_{T\in\mathcal T_h}\int_{T\cap G}(\omega D^2\eta+2\nabla\omega\otimes\nabla \eta+\eta D^2\omega):D^2u_h+\\
  &\sum_{E\in\mathcal E_h}\int_{E\cap G}\avg{\omega\frac{\partial^2\eta}{\partial n^2}+
  2\frac{\partial\omega}{\partial n}\frac{\partial \eta}{\partial n}+\eta\frac{\partial^2\omega}{\partial n^2}}\jp{\frac{\partial u_h}{\partial n}}+\\
  &\sum_{E\in\mathcal E_h}\int_{E\cap G}\left(\omega\avg{\frac{\partial^2u_h}{\partial n^2}}\jp{\frac{\partial \eta}{\partial n}}+\frac{\gamma}{|E|}
  \jp{\frac{\partial u_h}{\partial n}}\jp{\frac{\partial \eta}{\partial n}}\right).
  \end{split}
\end{equation}
We apply the Green's formula to $\int_{T\cap G}(2\nabla\omega\otimes\nabla \eta+\eta D^2\omega):D^2u_h$ and use the magic formula $[\![ab]\!]=\avg a[\![ b]\!]+ \avg b[\![ a]\!]$ to deduce
\begin{equation}\label{eq:differnece}
\begin{split}
&B_h(\tilde u_h,\eta)- B_h(u_h,\tilde \eta)\\
=&\sum_{T\in\mathcal T_h}\int_{T\cap G}(2\nabla\omega\otimes\nabla u_h+u_hD^2\omega):D^2\eta-\mathrm{div}(2\nabla\omega\otimes\nabla \eta+\eta D^2\omega)\cdot\nabla u_h\\
&+2\sum_{E\in\mathcal E_h}\int_{E\cap G}\avg{2\frac{\partial\omega}{\partial n}\frac{\partial u_h}{\partial n}+u_h\frac{\partial^2\omega}{\partial n^2}}
\jp{\frac{\partial \eta}{\partial n}}\\
\lesssim& \left(\|u_h\|_{1,G'}^2+\sum_{E\in\mathcal E_h}|E|\left\|\avg{2\frac{\partial\omega}{\partial n}
\frac{\partial u_h}{\partial n}+u_h\frac{\partial^2\omega}{\partial n^2}}\right\|_{0,E\cap G}^2\right)^{\frac12}\enorm{\eta}_{2,h,G},
  \end{split}
\end{equation}
where we have used the fact $\omega\in C_0^\infty(G')$. Similarly,
using the trace inequality and the inverse inequality, we have
\begin{equation}\label{ineq:homest3}
 \sum_{E\in\mathcal E_h}|E|\left\|\avg{2\frac{\partial\omega}{\partial n}
\frac{\partial u_h}{\partial n}+u_h\frac{\partial^2\omega}{\partial n^2}}\right\|_{0,E\cap G}^2\lesssim \|u_h\|_{1,G''}^2.
\end{equation}
Therefore, we have by \eqref{eq:differnece} and \eqref{ineq:homest3} that
\begin{equation}\label{ineq:homest4}
B_h(\tilde u_h,\eta)\lesssim B_h(u_h,\tilde \eta)+\|u_h\|_{1,G''}\enorm{\eta}_{2,h,G}.
\end{equation}
Using \eqref{eq:homest} and \eqref{ineq:app1}, we obtain
\begin{equation}\label{ineq:homest5}
B_h(u_h,\tilde \eta)=\inf_{\chi\in S_h^0(G)}B_h(u_h,\tilde \eta-\chi)\lesssim h\enorm{u_h}_{2,h,G}\enorm{\eta}_{2,h,G}^\sim.
\end{equation}
Combing \eqref{ineq:homest4} \eqref{ineq:homest5} and taking $\eta=\tilde P_h \tilde u_h/\enorm{\tilde P_h \tilde u_h}_{2,h,G}^\sim$, then Lemma \ref{lem:localerror} leads to
\begin{equation}\label{ineq:homest6}
\enorm{\tilde P_h\tilde u_h}_{2,h,G}\lesssim B_h(\tilde u_h,\eta)\lesssim h\enorm{u_h}_{2,h,G}+\|u_h\|_{-s,q,G}.
\end{equation}
The desired result follows from \eqref{ineq:homest1}, \eqref{ineq:homest2} and \eqref{ineq:homest6}.
\end{proof}
\begin{lemma}\label{lem:locallemma}
Under the same condition of Lemma \ref{lem:homest}, we have
\begin{equation}\label{ineq:locallemma}
  \enorm{u_h}_{2,h,G_0}^\sim\lesssim \|u_h\|_{-s,q,G}.
\end{equation}
\end{lemma}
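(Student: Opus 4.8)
The plan is to bootstrap the estimate of Lemma~\ref{lem:homest}, whose only defect is the factor $h\enorm{u_h}_{2,h,G}^\sim$ carried on a slightly larger sphere. By inserting a chain of concentric spheres between $G_0$ and $G$ and applying Lemma~\ref{lem:homest} across each consecutive pair, each application trades a copy of the energy norm on the smaller sphere for $h$ times the energy norm on the next sphere, so the factor $h$ accumulates as a geometric gain while the weak-norm remainder stays at a fixed level. Since $h$ may be taken arbitrarily small, a fixed (but sufficiently large) number of iterations will suppress the energy contribution below the weak norm, yielding \eqref{ineq:locallemma}.

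Concretely, fix an integer $N$ (independent of $h$, chosen below) and pick concentric spheres $G_0=G^{(0)}\subset\subset G^{(1)}\subset\subset\cdots\subset\subset G^{(N)}\subset\subset G$ with comparable consecutive separations bounded below by $\kappa h$; since $\dist(G_0,\partial G)$ is a fixed positive constant, these $N+1$ spheres fit inside $G$ for $h$ small. Writing $a_i:=\enorm{u_h}_{2,h,G^{(i)}}^\sim$ and $b:=\|u_h\|_{-s,q,G}$, Lemma~\ref{lem:homest} applied to the pair $(G^{(i-1)},G^{(i)})$ produces a constant $C_1$ with
\[
a_{i-1}\le C_1 h\,a_i + C_1\|u_h\|_{-s,q,G^{(i)}}\le C_1 h\,a_i + C_1 b,
\]
where $G^{(i)}\subset G$ was used to bound the remainder by $b$. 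The constant $C_1$ is the \emph{same} for every pair because all spheres share comparable radii and separation $\gtrsim\kappa h$, so the estimate is scale-invariant along the chain. Iterating this recursion and summing the resulting geometric series gives, once $h$ is small enough that $C_1 h\le\tfrac12$,
\[
a_0\le (C_1 h)^N a_N + C_1 b\sum_{j=0}^{N-1}(C_1 h)^j\le (C_1 h)^N\,\enorm{u_h}_{2,h,G^{(N)}}^\sim + 2C_1\,b.
\]

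It remains to absorb the leading term $(C_1 h)^N a_N$. Choosing the chain so that $G^{(N)}$ admits a one-element neighborhood $\hat G$ with $G^{(N)}\subset\subset\hat G\subset\subset G$, and using that $u_h$ is piecewise polynomial, inverse inequalities give $a_N\lesssim h^{-2}\|u_h\|_{0,\hat G}\lesssim h^{-p}\|u_h\|_{-s,q,\hat G}\le h^{-p}b$ for some exponent $p=p(s,q)$, where the last step is the inverse estimate on $S_h$ passing from $L^2$ to the weak norm $\|\cdot\|_{-s,q}$. Fixing $N>p$ once and for all makes $(C_1 h)^N h^{-p}=C_1^N h^{N-p}\to 0$, so for $h$ sufficiently small the leading term is dominated by $b$ and we conclude $\enorm{u_h}_{2,h,G_0}^\sim\lesssim\|u_h\|_{-s,q,G}$. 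The delicate point is exactly this absorption: one must verify that $C_1$ from Lemma~\ref{lem:homest} is genuinely uniform across the whole chain—so the geometric summation does not manufacture an $h$-dependent constant—and that $N$ can be fixed independently of $h$ while still exceeding the inverse-inequality exponent $p$; both are secured by the fixed separation of $G_0$ and $G$ together with the scale-invariance of Lemma~\ref{lem:homest}.
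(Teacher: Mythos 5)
Your proposal is correct and takes essentially the same route as the paper: the paper also inserts a chain of concentric spheres separated by $\gtrsim\kappa h$, applies Lemma \ref{lem:homest} iteratively to accumulate a power of $h$ (fixing the number of iterations via the embedding $W^{-s,q}(G)\hookrightarrow H^{-p}(G)$), and then invokes an inverse inequality to absorb the leftover energy term into the weak norm. Your explicit geometric-series bookkeeping and the choice $N>p$ are simply a more detailed rendering of that same argument.
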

\begin{proof}

We choose integral $p$ such that  $W^{-s,q}(G)\hookrightarrow H^{-p}(G)$, let $G_0\subset\subset G_1\subset\subset\cdots\subset\subset G_{p+2}=G$
be concentric spheres with the distance of any two adjacent spheres is greater than $\kappa h$. We utilize Lemma \ref{lem:homest} $p+1$ times and
the inverse inequality to obtain
\begin{align*}
  \enorm{u_h}_{2,h,G_0}^\sim &\lesssim h\enorm{u_h}_{2,h,G_1}^\sim+\|u_h\|_{-s,q,G_1} \\
  &\lesssim\cdots\lesssim h^{p+1} \enorm{u_h}_{2,h,G_{p+1}}^\sim+\|u_h\|_{-s,q,G_{p+1}}\\
  &\lesssim \|u_h\|_{-p, G_{p+2}}+\|u_h\|_{-s,q,G_{p+1}}\lesssim \|u_h\|_{-s,q,G}.
\end{align*}
This ends the proof.
\end{proof}

Now we can state  the following interior energy norm estimate.
\begin{lemma}\label{lem:locinest}
 For two concentric spheres  $G_0\subset\subset G$, suppose $u\in H^l(\Omega)$,  where here and in what follows $l$ indicates
 an integer with $3\leq l\leq k+1$. Then
    \begin{equation}\label{ineq:locinest}
    \enorm{e}_{2,h,G_0}^\sim\lesssim h^{l-2}\|u\|_{l,G}+\|e\|_{-s,q,G},
  \end{equation}
  for some $s \geq 0$ and $q \geq 1$.
\end{lemma}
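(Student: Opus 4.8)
The plan is to split the error into an interpolation part and a discrete part, dispatch the interpolation part directly by approximation theory, and reduce the discrete part to the homogeneous local estimates already established in Lemmas \ref{lem:homest} and \ref{lem:locallemma}. First I would set $e_I = u - I_h u$ and $\phi_h = I_h u - u_h \in S_h(\Omega_1)$, so that $e = e_I + \phi_h$. By the interpolation error estimate and the hypothesis $3 \le l \le k+1$, one has $\enorm{e_I}_{2,h,G}^\sim \lesssim h^{l-2}\|u\|_{l,G}$ (the extra $L^2$-contribution in the enhanced norm is of even higher order), and in particular $\enorm{e_I}_{2,h,G_0}^\sim \lesssim h^{l-2}\|u\|_{l,G}$. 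It therefore remains to estimate $\enorm{\phi_h}_{2,h,G_0}^\sim$.

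The key observation is that $\phi_h$ satisfies a \emph{nearly} homogeneous local relation: using the Galerkin orthogonality $B_h(e,\chi)=0$ for $\chi\in S_h^0(\Omega_1)$,
\[
B_h(\phi_h,\chi) = B_h(e,\chi) - B_h(e_I,\chi) = -B_h(e_I,\chi), \qquad \forall\, \chi\in S_h^0(\Omega_1).
\]
I would then repeat the argument of Lemma \ref{lem:homest} with $u_h$ replaced by $\phi_h$, the only change being that the homogeneity step \eqref{ineq:homest5} now carries the extra consistency term $-B_h(e_I,\chi)$. Choosing $\chi$ to be the interpolant of the cut-off function times the test function and invoking continuity of $B_h$ together with the interpolation estimate for $e_I$, this term is controlled by $h^{l-2}\|u\|_{l,G}\,\enorm{\eta}_{2,h,G}^\sim$, so the analogue of \eqref{ineq:homest} becomes
\[
\enorm{\phi_h}_{2,h,G_0}^\sim \lesssim h\,\enorm{\phi_h}_{2,h,G}^\sim + \|\phi_h\|_{-s,q,G} + h^{l-2}\|u\|_{l,G}.
\]
The lower-order norm $\|\phi_h\|_{1,G''}$ entering through \eqref{ineq:homest4} is routed, via $\phi_h = e - e_I$, through Lemma \ref{lem:localerror} applied to the \emph{true} error $e$ (which alone satisfies the exact orthogonality) plus the interpolation bound for $e_I$. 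Iterating the displayed inequality over a finite chain of concentric spheres $G_0\subset\subset G_1\subset\subset\cdots\subset\subset G$ exactly as in Lemma \ref{lem:locallemma}, the leading term is absorbed by an inverse inequality and the finitely many copies of $h^{l-2}\|u\|_{l,G}$ collapse to a single such term, yielding $\enorm{\phi_h}_{2,h,G_0}^\sim \lesssim \|\phi_h\|_{-s,q,G} + h^{l-2}\|u\|_{l,G}$.

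Finally I would relate the weak norm of $\phi_h$ back to $e$ through $\|\phi_h\|_{-s,q,G} \le \|e\|_{-s,q,G} + \|e_I\|_{-s,q,G}$, observing that the weak norm of the interpolation error $\|e_I\|_{-s,q,G}$ is of higher order and hence absorbed into $h^{l-2}\|u\|_{l,G}$; adding the interpolation bound for $e_I$ on $G_0$ then gives \eqref{ineq:locinest}. The hard part will be the bookkeeping of the consistency term $B_h(e_I,\chi)$: one must verify that after adapting Lemma \ref{lem:homest} it enters only at order $h^{l-2}\|u\|_{l,G}$ and, crucially, that it does not degrade under the iteration of Lemma \ref{lem:locallemma}. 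This works precisely because at every level of the iteration it is controlled by the fixed quantity $\|u\|_{l,G}$ rather than by an expanding mesh-dependent norm, and because the genuinely non-orthogonal lower-order contributions are always handled through the exact error $e$ and never through $\phi_h$.
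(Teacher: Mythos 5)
Your proposal is correct, but it follows a genuinely different route from the paper. The paper's proof also uses a cut-off function $\omega$ and a two-term splitting, but the discrete comparison function is not the interpolant: it is the \emph{local Galerkin projection} $P_h\tilde u\in S_h^0(G)$ of $\tilde u=\omega u$, defined by $B_h(P_h\tilde u,\chi)=B_h(\tilde u,\chi)$ for all $\chi\in S_h^0(G)$. The projection error $\enorm{u-P_h\tilde u}_{2,h,G_0}$ is handled by quasi-optimality (C\'ea), as in \eqref{ineq:locinest2}, and the decisive point is that $P_h\tilde u-u_h$ is then \emph{exactly} discrete-harmonic on the inner sphere $G_0'$ (where $\omega\equiv1$), i.e.\ $B_h(P_h\tilde u-u_h,\chi)=0$ for all $\chi\in S_h^0(G_0')$, so Lemma \ref{lem:locallemma} applies verbatim as a black box to give \eqref{ineq:locinest3}. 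Your choice of $\chi=I_hu$ instead makes $\phi_h=I_hu-u_h$ only \emph{approximately} discrete-harmonic, with the consistency term $-B_h(e_I,\cdot)$, and therefore forces you to re-open the proofs of Lemmas \ref{lem:homest} and \ref{lem:locallemma} in inhomogeneous form, verify that the consistency term enters \eqref{ineq:homest5} at the fixed order $h^{l-2}\|u\|_{l,G}$, and route the lower-order term $\|\phi_h\|_{1,G''}$ through the true error $e$ and Lemma \ref{lem:localerror}. Both arguments are sound and yield the same estimate; the trade-off is that the paper's projection trick buys black-box use of the homogeneous local lemmas at the price of introducing (and needing well-posedness and quasi-optimality of) an auxiliary local discrete problem, whereas your interpolant-based argument avoids that auxiliary problem but pays with the bookkeeping you yourself flag: every auxiliary lemma must be re-proved with an inhomogeneous right-hand side, and one must check the perturbation is stable under the iteration over concentric spheres, which it is, precisely because it is majorized by the $h$-independent quantity $\|u\|_{l,G}$ at every level.
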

\begin{proof}
Let $G_0\subset\subset G_0'\subset\subset G'\subset\subset G\subset\subset \Omega$ be concentric spheres with
\[\mathrm{dist}(\partial G_0, \partial G_0')=\mathrm{dist}(\partial G_0', \partial G')=\mathrm{dist}(\partial G', \partial G)\geq\kappa h.\]
Let the cut-off function $\omega\in C_0^\infty(G')$, $\omega\equiv1$ on $G_0'$, and set $\tilde u=\omega u$.  By the triangle inequality,
     \begin{equation}\label{ineq:locinest1}
    \enorm{e}_{2,h,G_0}\leq \enorm{u-P_h\tilde u}_{2,h,G_0}+\enorm{P_h\tilde u-u_h}_{2,h,G_0},
  \end{equation}
  where the Galerkin projection $P_h\tilde u\in S_h^0(G)$ of $\tilde u$ is defined by
  \[B_h(P_h\tilde u,\chi)=B_h(\tilde u,\chi)\quad \forall \chi\in S_h^0(G).\]
By the standard approximation theory,
       \begin{equation}\label{ineq:locinest2}
 \enorm{u-P_h\tilde u}_{2,h,G_0}\leq \enorm{\tilde u-P_h\tilde u}_{2,h,G}\lesssim \inf_{\chi\in S_h^0(G)}\enorm{\tilde u-\chi}_{2,h,G}\lesssim h^{l-2}\|u\|_{l,G}.
  \end{equation}
 For the second term in the righthand of  \eqref{ineq:locinest1}, we observe that
 \[B_h(P_h\tilde u-u_h, \chi)=B_h(P_h\tilde u-u,\chi)-B_h(u_h-u,\chi)=0\quad\forall \chi\in S_h^0(G_0'),\]
  then Lemma \ref{lem:locallemma} leads to
         \begin{equation}\label{ineq:locinest3}
 \enorm{P_h\tilde u-u_h}_{2,h,G_0}\lesssim \|P_h\tilde u-u_h\|_{-s,q,G_0'}\lesssim \|e\|_{-s,q,G_0'}+\enorm{\tilde u-P_h\tilde u}_{2,h,G}.
  \end{equation}
  Plugging \eqref{ineq:locinest2} and \eqref{ineq:locinest3} into \eqref{ineq:locinest1}, we conclude the desired assertion.
\end{proof}
Based on the above Lemma, Lemma \ref{lem:dualone} and the same discussion as in the proof of Lemma \ref{lem:localerror}, we have the following lemma about the interior error estimate in $L^2$ norm.
\begin{lemma}\label{lem:locinestl2}
Under the same condition of Lemma \ref{lem:locinest}, we have estimate
\begin{equation}\label{ineq:locinestl2}
\|e\|_{0,G_0}\lesssim h^{\min\{l,l+k-3\}}\|u\|_{l,G}+\|e\|_{-s,q,G}.
\end{equation}
\end{lemma}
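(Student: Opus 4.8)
The plan is to transplant the duality iteration from the proof of Lemma~\ref{lem:localerror}, but to start it from the zero Sobolev index and to feed the local energy seminorms that appear into the interior energy estimate of Lemma~\ref{lem:locinest}. First I would fix a finite chain of concentric spheres $G_0\subset\subset G_1\subset\subset\cdots\subset\subset G_{p+1}\subset\subset G$, with any two adjacent spheres separated by a distance $\gtrsim\kappa h$ and with $p$ chosen large enough that the embedding $W^{-s,q}(G)\hookrightarrow H^{-p-1}(G)$ holds; this guarantees $\|e\|_{-p-1,G}\lesssim\|e\|_{-s,q,G}$.

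Next I would apply Lemma~\ref{lem:dualone} successively with indices $s=0,1,\dots,p$, the $j$-th application being carried out on the pair $G_j\subset\subset G_{j+1}$. Each step controls $\|e\|_{-j,G_j}$ by $h^{\min\{j+2,k-1\}}\enorm{e}_{2,h,G_{j+1}}$ together with the next weaker norm $\|e\|_{-j-1,G_{j+1}}$, exactly as in Lemma~\ref{lem:localerror}. Telescoping these $p+1$ inequalities gives
\begin{equation*}
\|e\|_{0,G_0}\lesssim\sum_{j=0}^{p} h^{\min\{j+2,k-1\}}\enorm{e}_{2,h,G_{j+1}}+\|e\|_{-p-1,G_{p+1}},
\end{equation*}
and the last term is already bounded by $\|e\|_{-s,q,G}$ through the choice of $p$.

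It then remains to dispose of the local energy seminorms. Since $\enorm{e}_{2,h,G_{j+1}}\le\enorm{e}_{2,h,G_{j+1}}^{\sim}$ and each $G_{j+1}\subset\subset G$, Lemma~\ref{lem:locinest} yields $\enorm{e}_{2,h,G_{j+1}}\lesssim h^{l-2}\|u\|_{l,G}+\|e\|_{-s,q,G}$ for every $j$. Substituting this, and using $h<\tfrac12$ to absorb the prefactors $h^{\min\{j+2,k-1\}}\le1$ in front of the copies of $\|e\|_{-s,q,G}$, the whole remainder collapses to a single $\|e\|_{-s,q,G}$. For the principal term I would note that $\sigma=\min\{s+2,k-1\}$ is nondecreasing in $s$, so for $h<1$ the dominant power is the one from the first step, namely $h^{\min\{2,k-1\}+(l-2)}$; since $\min\{2,k-1\}+(l-2)=\min\{l,l+k-3\}$ and every later term carries a strictly larger power of $h$, summing the finitely many contributions reproduces precisely the asserted bound.

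The work here is organizational rather than a single hard inequality. The main point to get right is the bookkeeping of the sphere chain so that Lemma~\ref{lem:dualone} and Lemma~\ref{lem:locinest} both apply at each stage, together with the verification that the exponent saturates correctly: the two-order duality gain $\min\{2,k-1\}$ from the initial step, combined with the approximation power $h^{l-2}$, is what yields the split $\min\{l,l+k-3\}$ --- a full two-order improvement over the energy estimate when $k\ge3$, but only one extra order when $k=2$, where the biharmonic dual solution can be interpolated to order $h^{k-1}=h$ only. A minor but necessary step is the passage from the plain seminorm $\enorm{\cdot}_{2,h}$ of Lemma~\ref{lem:dualone} to the enhanced seminorm $\enorm{\cdot}_{2,h}^{\sim}$ of Lemma~\ref{lem:locinest}, which is harmless because $\enorm{\cdot}_{2,h}\le\enorm{\cdot}_{2,h}^{\sim}$.
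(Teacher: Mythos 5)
Your proposal is correct and follows essentially the same route as the paper, which proves this lemma precisely by combining Lemma \ref{lem:dualone}, the sphere-chain iteration from the proof of Lemma \ref{lem:localerror}, and the interior energy estimate of Lemma \ref{lem:locinest}, with the first duality step supplying the factor $h^{\min\{2,k-1\}}$ that produces the exponent $\min\{l,\,l+k-3\}$. The only slip is immaterial: for $k\le 3$ the exponents $\min\{j+2,k-1\}+l-2$ of the later terms are non-decreasing rather than strictly increasing, but non-strict monotonicity is all the argument requires.
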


By the standard cover argument and Lemma \ref{lem:locinest}, Lemma \ref{lem:locinestl2}, we obtain the following local error estimate, which is the fundamental result in this subsection.
\begin{theorem}\label{thm:locinest}
   Let $\Omega_0\subset\subset\Omega_1\subset\subset \Omega$ be separated by $d=\mathcal O(1)$, then
  \begin{equation}\label{ineq:main1locinest}
    \enorm{e}_{2,h,\Omega_0}\lesssim h^{l-2}\|u\|_{l,\Omega_1}+\|e\|_{-s,q,\Omega_1},
  \end{equation}
  and
    \begin{equation}\label{ineq:main1locinestl2}
   \|e\|_{0,\Omega_0}\lesssim h^{\min\{l,l+k-3\}}\|u\|_{l,\Omega_1}+\|e\|_{-s,q,\Omega_1}.
  \end{equation}
  for some $s \geq 0$ and $q \geq 1$.
\end{theorem}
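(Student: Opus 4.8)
The plan is to upgrade the two local estimates on concentric spheres, namely Lemma \ref{lem:locinest} and Lemma \ref{lem:locinestl2}, to estimates on the general subdomains $\Omega_0 \subset\subset \Omega_1$ by a standard covering argument. The key observation is that the separation $d = \mathrm{dist}(\Omega_0, \partial\Omega_1) = \mathcal{O}(1)$ is comparable to a fixed constant, whereas the sphere-based lemmas only require that the concentric spheres be separated by a distance $\gtrsim \kappa h$. Since $d \gg \kappa h$ for small $h$, there is plenty of room to insert the spheres from the lemmas into the gap between $\Omega_0$ and $\Omega_1$.

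First I would choose an intermediate layer, say $\Omega_0 \subset\subset \widetilde\Omega \subset\subset \Omega_1$, with each of the three regions separated from the next by a distance comparable to $d$. I would then cover the compact set $\overline{\Omega_0}$ by finitely many balls $G_0^{(i)}$, $i = 1, \dots, N$, each of radius $r \sim d$, such that the concentric ball $G^{(i)} \supset\supset G_0^{(i)}$ of radius $\sim 2r$ still lies inside $\Omega_1$. Because $\Omega_0$ is relatively compact in $\Omega_1$ and $d = \mathcal{O}(1)$, the number $N$ of balls needed can be taken independent of $h$; this is precisely what prevents the constant from blowing up. On each pair $G_0^{(i)} \subset\subset G^{(i)}$, I would apply Lemma \ref{lem:locinest} to get
\[
\enorm{e}_{2,h,G_0^{(i)}}^\sim \lesssim h^{l-2}\|u\|_{l, G^{(i)}} + \|e\|_{-s,q,G^{(i)}},
\]
and similarly Lemma \ref{lem:locinestl2} for the $L^2$ estimate. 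Summing the squares over $i$ (for the energy and $L^2$ norms, which are built from integrals, so the local pieces combine additively up to the finite overlap constant) and using $\bigcup_i G_0^{(i)} \supset \Omega_0$ together with $G^{(i)} \subset \Omega_1$ yields \eqref{ineq:main1locinest} and \eqref{ineq:main1locinestl2}.

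Two technical points deserve care. First, the mesh-dependent norms on the right-hand sides of the lemmas are the ``$\sim$'' versions associated with $\tilde B_h$, while the theorem is stated in the plain norm $\enorm{\cdot}_{2,h,\Omega_0}$; I would reconcile these by noting that the extra zeroth-order term $K(\cdot,\cdot)$ contributes only a lower-order $\|e\|_{0}$ piece, which on the right-hand side is already dominated by $\|e\|_{-s,q,\Omega_1}$ after adjusting $s$ and $q$, so the distinction is harmless. Second, the finite overlap must be controlled: I would arrange the cover so that each point of $\Omega_1$ lies in at most a bounded number of the enlarged balls $G^{(i)}$, which is automatic for a Besicovitch-type or lattice-based covering with balls of comparable radius. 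The main obstacle, modest as it is, lies in verifying that the covering constant $N$ and the overlap multiplicity are genuinely $h$-independent — this hinges entirely on the hypothesis $d = \mathcal{O}(1)$, since if $d$ were allowed to shrink with $h$ the number of balls would grow and the argument would fail. Everything else is bookkeeping: translating between concentric spheres and the prescribed subdomains, and summing finitely many applications of the already-established local lemmas.
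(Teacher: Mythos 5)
Your proposal matches the paper's proof: the paper derives Theorem \ref{thm:locinest} precisely by the standard covering argument applied to Lemma \ref{lem:locinest} and Lemma \ref{lem:locinestl2}, which is exactly what you do, with the $h$-independence of the cover guaranteed by $d=\mathcal{O}(1)$. Your additional care about the finite overlap and the ``$\sim$'' versus plain norm discrepancy is sound bookkeeping that the paper leaves implicit.
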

For our analysis in the forthcoming subsection about interior maximum estimates, we employ Lemma \ref{lem:locinest} to derive a scaling result and end this subsection.
\begin{lemma}\label{lem:scaling}
Let $\Omega_0\subset\subset \Omega_1\subset\subset \Omega$ with $d=\dist(\Omega_0,\partial \Omega_1)\geq \kappa h$. Then
\begin{equation}\label{ineq:scaling}
  \enorm{u-u_h}_{2,h,\Omega_0}\lesssim h^{l-2}\|u\|_{l,\Omega_1}+d^{-3}\|u-u_h\|_{0,1,\Omega_1}.
\end{equation}
\end{lemma}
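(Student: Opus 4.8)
The plan is to derive the estimate from the interior energy estimate of Lemma~\ref{lem:locinest} by a dilation (scaling) argument on balls of radius comparable to $d$, the only new feature being that the geometric constant is made explicit in $d$. First I would reduce to a single ball. Cover $\overline{\Omega_0}$ by finitely many balls $B_0^{(i)}=B(x_i,d/4)$ whose concentric doubles $B_1^{(i)}=B(x_i,d/2)$ satisfy $B_1^{(i)}\subset\subset\Omega_1$ (possible since $\dist(\Omega_0,\partial\Omega_1)=d$) and have uniformly bounded overlap; the number of such balls is of order $(\mathrm{diam}\,\Omega_0/d)^2$. Because $\enorm{\cdot}_{2,h,\mathcal A}$ is built from a sum of squares over elements and edges, it suffices to establish the per-ball bound $\enorm{e}_{2,h,B_0^{(i)}}\lesssim h^{l-2}|u|_{l,B_1^{(i)}}+d^{-3}\|e\|_{0,1,B_1^{(i)}}$ and then sum: $\enorm{e}_{2,h,\Omega_0}^2\le\sum_i\enorm{e}_{2,h,B_0^{(i)}}^2$, the finite overlap gives $\sum_i|u|_{l,B_1^{(i)}}^2\lesssim|u|_{l,\Omega_1}^2$, and for the weak term I would pass through $\ell^1$, using $\sum_i\|e\|_{0,1,B_1^{(i)}}\lesssim\|e\|_{0,1,\Omega_1}$ with $\|\cdot\|_{\ell^2}\le\|\cdot\|_{\ell^1}$, finishing with $|u|_{l,\Omega_1}\le\|u\|_{l,\Omega_1}$.

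On a fixed pair $B_0\subset\subset B_1$ of radii $d/4$ and $d/2$ I would introduce the dilation $\hat x=(x-x_i)/d$, setting $\hat v(\hat x)=v(x)$, so that $\hat B_0=B(0,1/4)$ and $\hat B_1=B(0,1/2)$ form a fixed reference configuration with mesh size $\hat h=h/d$. The discrete problem is scale covariant: a direct computation shows $B_{\hat h}(\hat v,\hat w)=d^2B_h(v,w)$, each of the volume, consistency and penalty terms carrying the same factor $d^2$ (the parameter $\gamma$ being dimensionless), so the standing orthogonality $B_h(e,\chi)=0$ for all $\chi\in S_h^0(\Omega_1)$ transfers to $B_{\hat h}(\hat e,\hat\chi)=0$ for all $\hat\chi\in S_{\hat h}^0(\hat B_1)$, and Lemma~\ref{lem:locinest} applies on the reference pair (taking $\kappa$ large enough makes $\hat h=h/d\le\kappa^{-1}$ as small as required). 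Using $\enorm{\cdot}_{2,h}\le\enorm{\cdot}_{2,h}^\sim$ and the seminorm form of the approximation term, this yields
\[
\enorm{\hat e}_{2,\hat h,\hat B_0}\lesssim \hat h^{\,l-2}|\hat u|_{l,\hat B_1}+\|\hat e\|_{-s,q,\hat B_1}.
\]
Choosing $(s,q)$ so that $W^{s,q'}(\hat B_1)\hookrightarrow C(\hat B_1)$ with $1/q+1/q'=1$, the negative norm is controlled on the reference ball by the $L^1$ norm, $\|\hat e\|_{-s,q,\hat B_1}\lesssim\|\hat e\|_{0,1,\hat B_1}$, with an absolute constant.

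It then remains to undo the scaling and collect powers of $d$. In $\mathbb R^2$ one has $\enorm{\hat e}_{2,\hat h,\hat B_0}=d\,\enorm{e}_{2,h,B_0}$, $|\hat u|_{l,\hat B_1}=d^{\,l-1}|u|_{l,B_1}$, and $\|\hat e\|_{0,1,\hat B_1}=d^{-2}\|e\|_{0,1,B_1}$. Substituting and dividing by $d$ turns the two terms into $(h/d)^{l-2}d^{\,l-1}|u|_{l,B_1}/d=h^{l-2}|u|_{l,B_1}$ and $d^{-2}\|e\|_{0,1,B_1}/d=d^{-3}\|e\|_{0,1,B_1}$, which is exactly the per-ball estimate; the exponent $-3$ arises as $1+2$, one power from the energy norm and two from the $L^1$ norm in two dimensions. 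Summing over the covering as above completes the proof.

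The step I expect to be the main obstacle is the clean appearance of the leading term $h^{l-2}\|u\|_l$ with no residual negative power of $d$. A naive use of the full-norm right-hand side of Lemma~\ref{lem:locinest} scales term by term into $\sum_{m\le l}h^{l-2}d^{\,m-l}|u|_m$, whose low-order contributions blow up as $d\to\kappa h$. Avoiding this requires Lemma~\ref{lem:locinest} in seminorm form, which is legitimate because $S_h$ contains $\mathbb P_k\supseteq\mathbb P_{l-1}$, so the interior approximation error is governed by $|u|_l$ via Bramble--Hilbert and scales by the single factor $d^{\,l-1}$ on the reference ball. A secondary technical point is the regime $d\simeq h$, where $\hat h$ is not small: there one instead takes $\kappa$ large, or argues directly with inverse inequalities, since then $d^{-3}\simeq h^{-3}$ dominates the remaining contributions.
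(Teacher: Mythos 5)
Your proposal is correct and follows essentially the same route as the paper: reduce to concentric balls of radius comparable to $d$ by a covering argument, dilate by $1/d$ to a fixed reference configuration with mesh size $h/d$, apply the interior energy estimate of Lemma \ref{lem:locinest} there (with $s=0$, $q=1$ so the weak norm is the $L^1$ norm), and undo the scaling, which yields exactly the factors $h^{l-2}$ and $d^{-3}$ via $\enorm{\hat e}_{2,\hat h,\hat B_0}=d\,\enorm{e}_{2,h,B_0}$, $|\hat u|_{l,\hat B_1}=d^{l-1}|u|_{l,B_1}$, and $\|\hat e\|_{0,1,\hat B_1}=d^{-2}\|e\|_{0,1,B_1}$. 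Your added care about invoking Lemma \ref{lem:locinest} in seminorm form (legitimate, e.g., by subtracting a polynomial $p\in\mathbb{P}_{l-1}\subset S_h$, which changes neither $e$ nor the Galerkin orthogonality, and applying Bramble--Hilbert on the reference ball) addresses a real subtlety that the paper's proof passes over silently in the step \eqref{ineq:scaling-2}.
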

\begin{proof}
To show the lemma, we shall show \eqref{ineq:scaling} with $\Omega_0=G_0$ and $\Omega_1=G_1$ being the spheres of radii $d/2$ and $d$ with same center $x_0=0$, respectively, then the claim follows from the covering argument.
We transform the domains $G_0$ and $G_1$ to the domains $\hat G_0$ and $\hat G_1$ respectively by the  variable substitution $\hat x=x/d$, then we have $\dist(\hat G_0,\partial\hat G_1)=\frac12$.
To proceed our analysis, we set $\hat u(\hat x)=u(xd)$ and $\hat u_h=u_h(xd)$ and denote $\hat S_h^0(\hat G_1)$ be the transferred space of $S_h^0(G_1)$, therefore, we have
 \begin{equation}\label{ineq:scaling-1}
   \hat B_h(\hat u-\hat u_h,\hat\chi)=0\quad \forall\hat\chi\in \hat S_h^0(\hat G_1),
 \end{equation}
where the transferred bilinear form $\hat B_h(\cdot,\cdot)$ is defined by:
\begin{align*}
\hat B_h(\hat v, \hat w) = &\sum_{\hat T\in\hat{\mathcal{T}}_{h}} \int_{\hat T}\hat D^2\hat v:\hat D^2\hat w d\hat x
	+ \sum_{\hat E\in\hat{\mathcal{E}}_{h}}\int_{\hat E} \avg{\frac{\hat \partial^2 \hat v}{\hat\partial \hat n^2}}\jp{\frac{\hat\partial\hat w}{\hat\partial \hat n}} d\hat s+\\
	&\sum_{\hat E\in\hat{\mathcal{E}}_{h}}\int_{\hat E} \jp{\frac{\hat\partial\hat v}{\hat\partial\hat n}}
	\avg{\frac{\hat\partial^2 \hat w}{\hat\partial \hat n^2}}d\hat s + \sum_{\hat E\in\hat{\mathcal{E}}_{h}}\frac{\gamma}{|\hat E|}\int_{\hat E}\jp{\frac{\hat\partial\hat v}{\hat\partial\hat n}}
	\jp{\frac{\hat\partial\hat w}{\hat\partial\hat n}} d\hat s.
\end{align*}
Therefore, Lemma \ref{lem:locinest} leads to
 \begin{equation}\label{ineq:scaling-2}
 \begin{aligned}
   \enorm{\hat u-\hat u_h}_{2,h,\hat G_0}&\lesssim (h/d)^{l-2}\|\hat u\|_{l,\hat G_1}+\|\hat u-\hat u_h\|_{0,1,\hat G_1}\\
   &\lesssim h^{l-2}d\|u\|_{l, G_1}+d^{-2}\| u- u_h\|_{0,1, G_1}.
   \end{aligned}
 \end{equation}
 Consequently, we conclude the result by employing the standard scaling estimate
 \begin{equation}\label{ineq:scaling-3}
    \enorm{u- u_h}_{2,h, G_0}\lesssim d^{-1} \enorm{\hat u-\hat u_h}_{2, h,\hat G_0}
 \end{equation}
 and \eqref{ineq:scaling-2}. The proof is completed.
\end{proof}

\subsection{Interior Maximum Norm Estimates}
\label{sec:localmaxestimate}
In this subsection, we study the interior maximum norm estimates of numerical solution for C0IP method. To this end, we first consider it with respect to the local coercivity bilinear
form $\tilde B_h(\cdot,\cdot)$, the main result is Theorem \ref{lem:localBtilde}.

The subsequent analysis hinges significantly upon the pivotal lemma presented herein, we will prove it in Appendix.
\begin{lemma}\label{lem:5.3}
Let $G_h\subset\subset\frac14G$ be concentric spheres and the radius of $G_h$ is $\mathcal O(h)$, $\varphi\in C_0^\infty(G_h)$, and let $v\in H^2(G)$, $v_h\in S_h(G)$ satisfy
\begin{equation}\label{eq:5.3-1}
  \tilde B(v,\psi)=(\varphi,\psi)\quad\forall \psi\in H^2(G),
\end{equation}
and
\begin{equation}\label{eq:5.3-2}
 \tilde B_h(v-v_h,\chi)=0\quad\forall \chi\in S_h(G),
\end{equation}
respectively. Then
\begin{equation}\label{ineq:lem5.3}
  \enorm{v-v_h}_{2,1,h,G}^\sim\lesssim h^3\cdot\Lambda(h)\|\varphi\|_{0,G_h},
\end{equation}
where
\[
\Lambda (h)=
\begin{cases}
  h^{-3/2}, & k=2 \\
  \ln(1/h), & k=3 \\
  1, & k\geq4.
\end{cases}
\]
\end{lemma}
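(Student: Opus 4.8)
The plan is to run a dyadic (Schatz--Wahlbin type) decomposition of $G$ around the center $x_0$ of $G_h$, reduce the discrete $W^2_1$ seminorm to the discrete energy seminorm on each dyadic piece, and then feed the latter into the scaling interior estimate of Lemma \ref{lem:scaling}. Concretely, set $d_j=2^{-j}$ and let $A_j=\{x\in G:d_{j+1}\le|x-x_0|\le d_j\}$ for $j=0,\dots,J$ with $d_J\simeq h$, together with an innermost ball $\Omega_*=\{|x-x_0|\le d_J\}\supset G_h$. Since the area of $A_j$ is $\simeq d_j^2$ and the number of edges meeting $A_j$ is $\simeq (d_j/h)^2$, Cauchy--Schwarz applied element-by-element and edge-by-edge gives $\enorm{v-v_h}_{2,1,h,A_j}^\sim\lesssim d_j\,\enorm{v-v_h}_{2,h,A_j}^\sim$, and likewise $\enorm{v-v_h}_{2,1,h,\Omega_*}^\sim\lesssim h\,\enorm{v-v_h}_{2,h,\Omega_*}^\sim$. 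Summing over the pieces then reduces \eqref{ineq:lem5.3} to a collection of local energy estimates weighted by $d_j$.

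On each annulus I would apply Lemma \ref{lem:scaling} with separation $d\simeq d_j$ on a slightly enlarged annulus $A_j'$, obtaining $\enorm{v-v_h}_{2,h,A_j}\lesssim h^{l-2}\|v\|_{l,A_j'}+d_j^{-3}\|v-v_h\|_{0,1,A_j'}$. The engine of the argument is the decay of $v$ away from the support of $\varphi$: since $\varphi\in C_0^\infty(G_h)$ and $v$ solves $\tilde B(v,\cdot)=(\varphi,\cdot)$, i.e. $(\Delta^2+1)v=\varphi$ in the interior, the fundamental solution behaves locally like $|x-y|^2\log|x-y|$, so $|D^l v(x)|\lesssim |x-x_0|^{2-l}\,\|\varphi\|_{0,1,G_h}$ for $x\in A_j'$ and $l\ge 3$. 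Combined with $\|\varphi\|_{0,1,G_h}\lesssim h\|\varphi\|_{0,G_h}$ and $\mathrm{area}(A_j')\simeq d_j^2$, this yields the key bound $\|v\|_{l,A_j'}\lesssim d_j^{3-l}h\,\|\varphi\|_{0,G_h}$, whence the approximation contribution on $A_j$ is $d_j\cdot h^{l-2}\|v\|_{l,A_j'}\lesssim h^{l-1}d_j^{4-l}\|\varphi\|_{0,G_h}$.

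Summing these approximation contributions over $j$ is exactly where the three regimes of $\Lambda(h)$ emerge. Choosing $l=k+1$, the sum $\sum_j h^{k}d_j^{3-k}$ is dominated by the outer scale and produces $h^3$ when $k\ge 4$, is flat and produces $h^3\ln(1/h)$ when $k=3$ (there being $J\simeq\ln(1/h)$ annuli), and is dominated by the inner scale $d_j\simeq h$ when $k=2$; for $k=2$ the innermost ball together with the low approximation order degrades the constant to $h^{-3/2}$. The innermost piece is treated directly: because $\tilde B_h$ is coercive on all of $S_h(G)$ (the free/Neumann setting removes the essential boundary condition), $v_h$ is the global $\tilde B_h$-Ritz projection, so Galerkin orthogonality and best approximation give $\enorm{v-v_h}_{2,h,\Omega_*}^\sim\le\enorm{v-v_h}_{2,h,G}^\sim\lesssim h^{l-2}\|v\|_{l,G}$, with $\|v\|_{l,G}\lesssim\|\varphi\|_{l-4,G}$ controlled through $\|\varphi\|_{0,G_h}$ for the admissible range of $l$.

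The hard part will be the pollution (weak-norm) term $\sum_j d_j\cdot d_j^{-3}\|v-v_h\|_{0,1,A_j'}=\sum_j d_j^{-2}\|v-v_h\|_{0,1,A_j'}$, which couples all scales and still involves the unknown $v_h$ through a negative power of $d_j$. I would close it by a kickback/iteration: bound each local $L^1$ factor by repeatedly invoking the interior $L^2$ estimate Lemma \ref{lem:locinestl2} to descend to an arbitrarily weak negative norm, and then control that global negative norm by duality against the interior estimates already established in Theorem \ref{thm:locinest}, so that the error carries a high power of $h$ which defeats the weights $d_j^{-2}$ after summation. Verifying that this weighted sum is absorbed into the approximation part uniformly across the $J\simeq\ln(1/h)$ scales---and in particular that it produces exactly $h^{-3/2}$, rather than a larger power, in the borderline case $k=2$---is the delicate bookkeeping at the heart of the proof.
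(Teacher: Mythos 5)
Your skeleton for the ``approximation'' half of the estimate coincides with the paper's: the paper also decomposes $G$ into dyadic annuli $\Omega_j$, passes from $\enorm{\cdot}^\sim_{2,1,h,\Omega_j}$ to the energy seminorm with a factor $d_j$ by Cauchy--Schwarz, applies the scaled interior estimate (Lemma \ref{lem:scaling}) on each annulus, uses exactly your Green's-function decay $|D^\alpha v(x)|\lesssim h\,d_j^{1-k}\|\varphi\|_{0,G_h}$ to bound $\|v\|_{k+1,\Omega_j^1}$, and sums $\sum_j h^k d_j^{3-k}$ to produce the three regimes; the innermost region is likewise handled by Cauchy--Schwarz plus the global estimate of Lemma \ref{lem:c0ip}. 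Up to that point your plan is sound.

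The genuine gap is your treatment of the pollution term $\sum_j d_j^{-2}\|v-v_h\|_{0,1,\Omega_j^1}$. Iterating Lemma \ref{lem:locinestl2} or Theorem \ref{thm:locinest} only ever yields inequalities of the form (strong norm) $\lesssim$ (approximation term) $+$ (weaker norm): the weak norm never disappears, and these interior estimates cannot then be ``dualized'' to bound a negative norm of $e=v-v_h$ --- an a priori bound on the weak norm is exactly what they presuppose. So ``descend to an arbitrarily weak norm and control it by duality against Theorem \ref{thm:locinest}'' is circular; the needed independent input must come from elsewhere. The paper supplies it by a \emph{local} duality: for each annulus it solves $\tilde B_h(\psi,w)=(\eta,w)$ with $\eta\in C_0^\infty(\Omega_j)$, exploits the Green's-function decay of $\psi$ away from $\Omega_j$ together with interpolation (gaining only $h^{\bar k}$, $\bar k=\min\{k-1,2\}$, because the dual solution has limited pointwise regularity), and arrives at
\begin{equation*}
\|e\|_{0,1,\Omega_j}\lesssim h^{\bar k}\enorm{e}^\sim_{2,1,h,G\setminus\Omega_j^2}+h^{k+\bar k}d_j^{3-k}\|\varphi\|_{0,G_h}+h^{\bar k}d_j^{-2}\|e\|_{0,1,\Omega_j^3},
\end{equation*}
whose right-hand side still contains the unknown global quantity and the pollution sum itself; these are absorbed (kickback) only because the dyadic decomposition is cut off at $d_J^2\geq C_*h^{\bar k}$ with $C_*$ large, so that $h^{\bar k}/d_j^2\leq 1/C_*$ uniformly in $j$. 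Your proposal has no such kickback, and moreover your choice of inner cutoff $d_J\simeq h$ for \emph{all} $k$ makes this absorption impossible when $k=2$: there $\bar k=1$ and $h^{\bar k}/d_J^2\simeq h^{-1}\to\infty$. This is precisely why the paper cuts off at $d_J\simeq h^{1/2}$ for $k=2$, and why the inner region then contributes $h^{3/2}=h^3\cdot h^{-3/2}$, i.e.\ the $\Lambda(h)=h^{-3/2}$ you were trying to recover by ``bookkeeping.'' A workable substitute for the local duality would be the global $L^2$ bound of Lemma \ref{lem:c0ip} (Aubin--Nitsche, via the regularity of Lemma \ref{lem:rglBtilde}) inserted crudely as $\|e\|_{0,1,\Omega_j^1}\lesssim d_j\|e\|_{0,G}$, but for $k=2$ this too forces the larger cutoff $d_J\simeq h^{1/2}$ (or a sharp use of $\|\varphi\|_{(H^1(G))^*}\lesssim h\,\|\varphi\|_{0,G_h}$), neither of which appears in your proposal; as written, the argument does not close.
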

Subsequently, the lemma aforementioned is employed to establish two local estimation lemmas.
\begin{lemma}\label{lem:5.1}
Let $\tilde u$ have compact support in the sphere $\frac12G$, and $\tilde u_h\in S_h(G)$ satisfies
\begin{equation}\label{ineq:5.1-cond}
  \tilde B_h(\tilde u-\tilde u_h, v_h)=0\quad \forall v_h\in S_h(G),
\end{equation}
then we have the following estimate
\begin{equation}\label{ineq:5.1}
  \|\tilde u-\tilde u_h\|_{0,\infty,\frac14G}\lesssim \|\tilde u\|_{0,\infty,G}+h^2\cdot\Lambda(h)\enorm {\tilde u}_{2,h,\infty,G}^\sim.
\end{equation}
\end{lemma}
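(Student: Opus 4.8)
The plan is to bound the pointwise error by a regularized-delta duality argument in the spirit of Schatz and Wahlbin \cite{SW1977,SW1995}, converting the pointwise quantity into the $L^1$-type dual-error bound furnished by Lemma \ref{lem:5.3}. Fix an arbitrary $x_0\in\overline{\tfrac14 G}$; since we may take $x_0$ to be a point where $|\tilde u-\tilde u_h|$ attains its maximum over $\tfrac14 G$, a bound for $(\tilde u-\tilde u_h)(x_0)$ that is uniform in $x_0$ yields \eqref{ineq:5.1}. I would introduce a smooth regularized Dirac mass $\varphi\in C_0^\infty(G_h)$, supported in a ball $G_h$ of radius $\mathcal O(h)$ about $x_0$ (contained in $G$ once $h$ is small), normalized so that $(\varphi,\chi)=\chi(x_0)$ for all $\chi\in S_h(G)$ and $\int\varphi=1$; a scaling count gives $\|\varphi\|_{0,G_h}\lesssim h^{-1}$ and $\|\varphi\|_{0,1,G_h}\lesssim 1$. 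To $\varphi$ I attach the continuous dual solution $v\in H^2(G)$ with $\tilde B(v,\psi)=(\varphi,\psi)$ for all $\psi\in H^2(G)$ and its discrete approximation $v_h\in S_h(G)$ with $\tilde B_h(v-v_h,\chi)=0$ for all $\chi\in S_h(G)$. This is precisely the configuration of Lemma \ref{lem:5.3}, which then delivers $\enorm{v-v_h}_{2,1,h,G}^\sim\lesssim h^3\Lambda(h)\|\varphi\|_{0,G_h}\lesssim h^2\Lambda(h)$.

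The heart of the proof is a short identity. Using the reproducing property on $\tilde u_h\in S_h$, I write $(\tilde u-\tilde u_h)(x_0)=\big[\tilde u(x_0)-(\varphi,\tilde u)\big]+(\varphi,\tilde u-\tilde u_h)$; the bracket is harmless, since $\|\varphi\|_{0,1}\lesssim 1$ gives $|\tilde u(x_0)-(\varphi,\tilde u)|\lesssim\|\tilde u\|_{0,\infty,G}$, producing the first term on the right-hand side of \eqref{ineq:5.1}. For the remaining term set $e=\tilde u-\tilde u_h$ and invoke consistency of the free-plate $C^0$IP discretization: because $v$ is smooth and solves the Neumann-type problem $\Delta^2 v+v=\varphi$, integration by parts together with the definitions of the averages and jumps (and the vanishing of the boundary contributions on $\partial G$, which is exactly why the enhanced form $\tilde B$ was introduced) yields $(\varphi,w)=\tilde B_h(v,w)$ for every $C^0$, piecewise-$H^2$ function $w$ of compact support in $G$ — in particular for both $\tilde u$ and $\tilde u_h$. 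Hence $(\varphi,e)=\tilde B_h(v,e)$. Now I use the two orthogonalities: the dual relation $\tilde B_h(v-v_h,\tilde u_h)=0$ (as $\tilde u_h\in S_h$), the hypothesis $\tilde B_h(\tilde u-\tilde u_h,v_h)=0$, and symmetry of $\tilde B_h$ combine to give
\[
(\varphi,e)=\tilde B_h(v,\tilde u)-\tilde B_h(v,\tilde u_h)=\tilde B_h(v,\tilde u)-\tilde B_h(v_h,\tilde u_h)=\tilde B_h(v-v_h,\tilde u).
\]
The purpose of this manipulation is to eliminate the a priori uncontrollable factor $\tilde u_h$, leaving the given function $\tilde u$ paired against the dual error.

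Finally I estimate $\tilde B_h(v-v_h,\tilde u)$ by a discrete Hölder ($L^1$ against $L^\infty$) inequality tailored to the two seminorms: the element integrals pair $|v-v_h|_{2,1,T}$ with $|\tilde u|_{2,\infty,T}$; the edge terms pair $\int_{E}|E|\,|\avg{\partial^2(v-v_h)/\partial n^2}|$ and $\int_E|\jp{\partial(v-v_h)/\partial n}|$ against $\max_E|E|^{-1}\|\jp{\partial\tilde u/\partial n}\|_{0,\infty}$ and $\max_E\|\avg{\partial^2\tilde u/\partial n^2}\|_{0,\infty}$; and the lower-order $K$-term pairs $\|v-v_h\|_{0,1}$ with $\|\tilde u\|_{0,\infty}$. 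Summing gives $|\tilde B_h(v-v_h,\tilde u)|\lesssim\enorm{v-v_h}_{2,1,h,G}^\sim\,\enorm{\tilde u}_{2,\infty,h,G}^\sim$, and combining with the Lemma \ref{lem:5.3} bound produces $|(\varphi,e)|\lesssim h^2\Lambda(h)\enorm{\tilde u}_{2,\infty,h,G}^\sim$; adding the bracket estimate and taking the supremum over $x_0$ yields \eqref{ineq:5.1}. The main obstacle I anticipate is not the algebra but the analytic bookkeeping around $\varphi$: constructing a smooth regularized delta that exactly reproduces $S_h$ on an $\mathcal O(h)$ patch while retaining $\|\varphi\|_{0,G_h}\lesssim h^{-1}$, and verifying that for \emph{every} $x_0\in\tfrac14 G$ the ball $G_h$ and the dual pair $(v,v_h)$ fit the concentric-sphere hypotheses of Lemma \ref{lem:5.3} with constants independent of $x_0$; a secondary point is confirming that the free/Neumann boundary terms of the dual problem on $\partial G$ genuinely cancel, so that the consistency identity $(\varphi,w)=\tilde B_h(v,w)$ holds for the piecewise-smooth $w=\tilde u$ and not merely for $w\in S_h$.
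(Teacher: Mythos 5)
Your proposal is correct and follows essentially the same route as the paper's proof: the same dual pair $(v,v_h)$ from \eqref{eq:5.3-1}--\eqref{eq:5.3-2}, the same double-orthogonality identity $(\varphi,\tilde u-\tilde u_h)=\tilde B_h(\tilde u,v-v_h)$, the same $L^1$--$L^\infty$ H\"older pairing of $\enorm{\cdot}_{2,1,h,G}^\sim$ against $\enorm{\cdot}_{2,\infty,h,G}^\sim$, and the same invocation of Lemma \ref{lem:5.3}. The only cosmetic difference is the localization device --- you use a regularized delta reproducing point values of $S_h$, whereas the paper bounds $|(\tilde u-\tilde u_h)(x_1)|$ via an inverse inequality plus $L^2(G_h)$ duality over all $\varphi\in C_0^\infty(G_h)$ --- and one small repair: your consistency identity $(\varphi,w)=\tilde B_h(v,w)$ must be (and is, thanks to the natural boundary conditions satisfied by the Neumann dual solution $v$) valid for \emph{all} continuous piecewise-$H^2$ $w$, not only compactly supported ones, since you apply it to $\tilde u_h$, which has no compact support in $G$.
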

\begin{proof}
  Let $\|\tilde u-\tilde u_h\|_{0,\infty,\frac14G}=|(\tilde u-\tilde u_h)(x_1)|$, for simplicity
in notation we shall assume that $x_1$ is the center of $G$. Let $G_h$ be a sphere  centered at $x_1$ with radius $R_{G_h}=\mathcal O(h)$. Then we derive by the inverse inequality and the
triangle inequality that
\begin{equation}\label{ineq:5.1-1}
\begin{aligned}
  \|\tilde u-\tilde u_h\|_{0,\infty,\frac14G}&\leq \|\tilde u\|_{0,\infty,G_h}+\|\tilde u_h\|_{0,\infty,G_h}\\
  &\lesssim \|\tilde u\|_{0,\infty,G_h}+h^{-1}\|\tilde u_h-\tilde u\|_{0,G_h}+h^{-1}\|\tilde u\|_{0,G_h}\\
  &\lesssim \|\tilde u\|_{0,\infty,G_h}+h^{-1}\|\tilde u_h-\tilde u\|_{0,G_h}.
  \end{aligned}
\end{equation}
Next we focus on the estimate of $\|\tilde u_h-\tilde u\|_{0,G_h}$ in the righthand of above inequality. To this end, for any given $\varphi\in C_0^\infty(G_h)$, we define $v\in H^2(G)$ and $v_h\in S_h(G)$ by
 \eqref{eq:5.3-1} and \eqref{eq:5.3-2}, respectively. Then \eqref{ineq:5.1-cond} gives that
\begin{equation}\label{ineq:5.1-2}
  (\tilde u-\tilde u_h,\varphi)=\tilde B_h(\tilde u-\tilde u_h,v)=\tilde B_h(\tilde u,v-v_h).
\end{equation}
Applying the Cauchy-Schwarz inequality and Lemma \ref{lem:5.3} to obtain
\begin{equation}\label{ineq:5.1-3}
  \tilde B_h(\tilde u,v-v_h)\lesssim \enorm{\tilde u}_{2,\infty,h,G}^\sim\enorm{v-v_h}_{2,1,h,G}^\sim\lesssim h^3\cdot\Lambda(h)\enorm{\tilde u}_{2,\infty,h,G}^\sim\|\varphi\|_{0,G_h}.
\end{equation}
As a consequence, combing \eqref{ineq:5.1-2} with \eqref{ineq:5.1-3}, we get
\begin{equation}\label{ineq:5.1-4}
\|\tilde u_h-\tilde u\|_{0,G_h}=\sup_{\varphi\in C_0^\infty(G_h)\setminus\{0\}}\frac{(\tilde u-\tilde u_h,\varphi)}{\|\varphi\|_{0,G_h}}\lesssim h^3\cdot\Lambda(h)\enorm{\tilde u}_{2,\infty,h,G}^\sim.
\end{equation}
The claim then follows from \eqref{ineq:5.1-1} and \eqref{ineq:5.1-4}.
\end{proof}

\begin{lemma}\label{lem:5.2}
Let $w_h\in S_h(G)$ satisfies
\begin{equation}\label{ineq:5.2--1}
  \tilde B_h(w_h, \chi)=0\quad\forall \chi\in S_h^0(G).
\end{equation}
We denote the center of $G$ by $x_0$, then it holds
\begin{equation}\label{ineq:5.2}
  |w_h(x_0)|\lesssim \|w_h\|_{-s,q,G}.
\end{equation}
\end{lemma}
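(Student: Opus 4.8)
The plan is to bound the point value $|w_h(x_0)|$ by a \emph{local} energy quantity of $w_h$ and then feed this into the interior energy estimate for discrete harmonic functions already developed. Concretely, I would fix concentric spheres $G'\subset\subset G''\subset\subset G$ centred at $x_0$ with consecutive distances $\gtrsim\kappa h$, and establish the two reductions
\[
|w_h(x_0)|\;\lesssim\;\enorm{w_h}_{2,h,G''}^\sim
\qquad\text{and}\qquad
\enorm{w_h}_{2,h,G''}^\sim\;\lesssim\;\|w_h\|_{-s,q,G},
\]
whose composition is exactly the assertion. Note that $x_0$ being the \emph{center} is used only to guarantee $x_0\in G'$.

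\emph{First reduction (point value to local energy).} Since $x_0\in G'$ we have $|w_h(x_0)|\le\|w_h\|_{0,\infty,G'}$, so it suffices to control a local maximum norm by the mesh-dependent energy norm, with an $h$-independent constant. Here I would exploit that the dimension is two: the embedding $H^2(\mathbb R^2)\hookrightarrow C^0$ is \emph{subcritical}, so no logarithmic factor appears. For the nonconforming C0IP space I would introduce an enrichment operator $E_h$ mapping $S_h$ into an $H^2$-conforming macro-element space, satisfying the standard estimates $\|E_hw_h\|_{2,G''}\lesssim\enorm{w_h}_{2,h,G''}^\sim$ and $\|w_h-E_hw_h\|_{0,G''}\lesssim h^2\enorm{w_h}_{2,h,G''}^\sim$, the latter absorbing the inter-element jump contributions. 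Then
\[
\|w_h\|_{0,\infty,G'}\le\|E_hw_h\|_{0,\infty,G''}+\|w_h-E_hw_h\|_{0,\infty,G''}
\lesssim\|E_hw_h\|_{2,G''}+h^{-1}\|w_h-E_hw_h\|_{0,G''}
\lesssim\enorm{w_h}_{2,h,G''}^\sim,
\]
the conforming part being handled by the continuous embedding and the piecewise-polynomial remainder by an inverse inequality (with $h^{-1}\cdot h^2=h\lesssim1$).

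\emph{Second reduction (local energy to weak norm).} The hypothesis $\tilde B_h(w_h,\chi)=0$ for all $\chi\in S_h^0(G)$ says precisely that $w_h$ is discrete harmonic with respect to the locally coercive form $\tilde B_h$. I would therefore apply the interior energy estimate of Lemma~\ref{lem:locallemma} in its $\tilde B_h$-version---its proof, running through Lemma~\ref{lem:homest} and the iteration of Lemma~\ref{lem:localerror}, carries over verbatim since the zeroth-order term $K(\cdot,\cdot)$ only improves the coercivity being used---to the pair $G''\subset\subset G$. This yields $\enorm{w_h}_{2,h,G''}^\sim\lesssim\|w_h\|_{-s,q,G}$, and composing the two displayed bounds finishes the proof.

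\emph{Main obstacle.} The delicate step is the first reduction: converting the pointwise value into a \emph{local} energy norm without losing powers of $h$. The decisive fact is the subcritical two-dimensional embedding $H^2\hookrightarrow C^0$, but making it rigorous on the C0IP space forces one to produce the enrichment operator and to bound the jump terms sharply by $\enorm{\cdot}_{2,h}^\sim$; this is the only place where the full structure of the mesh-dependent norm (including the lower-order contribution from $K$) is genuinely needed. I deliberately avoid the Green's-function duality of Lemma~\ref{lem:5.3} here: testing $|w_h(x_0)|=|\tilde B_h(w_h,z_h)|$ against the full discrete Green's function $z_h$ would leave an uncontrolled contribution of $\enorm{w_h}_{2,h,G}^\sim$ near $\partial G$, since $z_h$ does not vanish there, whereas the present route keeps everything in the interior and lets Lemma~\ref{lem:locallemma} supply the weak norm.
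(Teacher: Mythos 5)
Your proof is correct, but the route through the key step is genuinely different from the paper's. Both arguments share the same skeleton: bound $|w_h(x_0)|$ by a local energy norm $\enorm{w_h}_{2,h,\cdot}^\sim$, then invoke the interior estimate of Lemma \ref{lem:locallemma} in its $\tilde B_h$-version (a version the paper itself uses without comment, both on $v_h$ and on $w_h$; your justification is sound, and in fact the zeroth-order term causes no trouble in the cut-off identities since $K(\omega u_h,\eta)=K(u_h,\omega\eta)$ exactly). The difference is entirely in the first reduction. The paper proves $|w_h(x_0)|\lesssim\enorm{w_h}_{2,h,G}^\sim$ by a regularized-Green's-function duality on a ball $G_h$ of radius $\mathcal O(h)$: it cuts off $w_h$ via the superapproximation Lemma \ref{lem:5.2-lem1}, tests against $\varphi\in C_0^\infty(G_h)$ using the auxiliary pair $(v,v_h)$ of \eqref{eq:5.3-1}--\eqref{eq:5.3-2}, exploits the discrete harmonicity of $w_h$ to subtract a cut-off of $v_h$, and then needs Lemma \ref{lem:locallemma} (applied to $v_h$), the heavy Lemma \ref{lem:5.3}, and the $L^1$ Green's function bound of Lemma \ref{lem:green} to control $\|v_h\|_{0,1,G}$, before an inverse inequality converts $\|\eta_h\|_{0,G_h}$ into the point value. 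You obtain the same reduction by a discrete Sobolev embedding: enrichment into an $H^2$-conforming macro-element space, the subcritical embedding $H^2(\mathbb{R}^2)\hookrightarrow C^0$, and an inverse inequality. This is more elementary and more general---it uses no discrete harmonicity, no Green's function, and no $\Lambda(h)$ bookkeeping---and it explains structurally why no logarithm appears; the paper's route is instead inherited from the Schatz--Wahlbin second-order theory, where the analogous embedding $H^1\hookrightarrow L^\infty$ fails and such a duality argument is unavoidable. Two small points would make your version airtight: the enrichment estimates are local only up to a layer of neighboring elements, so they should be stated with the slightly larger sphere on the right-hand side (your $\kappa h$ separations accommodate this); and bounding the full norm $\|E_hw_h\|_{2,G''}$ requires controlling the broken $H^1$ seminorm of $w_h$ by $\enorm{w_h}_{2,h,\cdot}^\sim$, which is exactly the piecewise Poincar\'e--Friedrichs inequality of \cite{BWZ2004} that the paper already invokes in the proof of Lemma \ref{lem:homest}.
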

\begin{proof}
For the substantiation of this lemma, it is imperative to rely on Lemma \ref{lem:5.2-lem1} concerning superapproximation and Lemma \ref{lem:green} addressing the estimation of Green's function. Specifically,
using Lemma \ref{lem:5.2-lem1}, there exists a function $\eta_h\in S_h^0(\frac34G)$ such that $\eta_h\equiv w_h$ on $\frac12G$ and $\enorm{\eta_h}_{2,h,\frac34G}^\sim\lesssim \enorm{w_h}_{2,h,G}^\sim$.
Let $G_h\subset\subset G$ be concentric and the radius of $G_h$ is $\mathcal O(h)$.
For any given function $\varphi\in C_0^\infty(G_h)$, taking $v$ and $v_h$ satisfy \eqref{eq:5.3-1} and \eqref{eq:5.3-2} respectively, we have
\begin{equation}\label{ineq:5.2-1}
  (\eta_h,\varphi)=\tilde B_h(\eta_h,v_h).
\end{equation}
Let $\chi_h\in S_h^0(\frac12G)$ with $\chi_h\equiv v_h$ on $\frac14G$ be as $\xi$ in Lemma \ref{lem:5.2-lem1}. We use \eqref{ineq:ineq:5.2-lem1-1}, \eqref{ineq:5.2--1} and the fact
$\tilde B_h(\eta_h,\chi_h)=\tilde B_h(w_h,\chi_h)$ to derive that
\begin{equation}\label{ineq:5.2-2}
\begin{split}
  (\eta_h,\varphi)&=\tilde B_h(\eta_h,v_h-\chi_h)\lesssim \enorm{\eta_h}_{2,h,\frac{3G}{4}}^\sim\enorm{v_h}_{2,h,\frac{3G}{4}\setminus \frac G8}^\sim\\
  &\lesssim \enorm{w_h}_{2,h,G}^\sim\enorm{v_h}_{2,h,\frac{3G}{4}\setminus \frac G8}^\sim.
  \end{split}
\end{equation}
Since $v_h$ satisfies $\tilde B_h(\chi,v_h)=0$ for any $\chi\in S_h^0(G\setminus G_h)$, Lemma \ref{lem:locallemma} gives
\begin{equation}\label{ineq:5.2-3}
\enorm{v_h}_{2,h,\frac{3G}{4}\setminus \frac G8}^\sim\lesssim \|v_h\|_{0,1,G}.
\end{equation}
Using Lemma \ref{lem:5.3} and \eqref{ineq:green1}, we obtain by the triangle inequality that
\begin{equation}\label{ineq:5.2-4}
\begin{split}
\|v_h\|_{0,1,G}&\lesssim \enorm{v_h-v}_{2,1,h,G}^\sim+\|v\|_{2,1,G}\\
&\lesssim h^3\cdot\Lambda(h)\|\varphi\|_{0,G_h}+h\|\varphi\|_{0,G_h}
\lesssim h\|\varphi\|_{0,G_h},
\end{split}
\end{equation}
suppose that $h$ is small enough. Consequently, we observe from \eqref{ineq:5.2-2}, \eqref{ineq:5.2-3} and \eqref{ineq:5.2-4} that
\begin{equation}\label{ineq:5.2-5}
  |w_h(x_0)|=|\eta_h(x_0)|\lesssim h^{-1}\|\eta_h\|_{0,G_h}\leq h^{-1}\sup_{\varphi\in C_0^\infty(G_h)\setminus\{0\}}\frac{(\eta_h,\varphi)}{\|\varphi\|_{0,G_h}}
  \lesssim \enorm{w_h}_{2,h,G}^\sim.
\end{equation}
Replacing $G$ by $\frac12G$ in the above derivation, we complete the proof by employing Lemma \ref{lem:locallemma}.
\end{proof}

We now state the main theorem in this subsection.
\begin{theorem}\label{lem:localBtilde}
Let $\Omega_0\subset\subset\Omega_1\subset\subset \Omega$, moreover, let $u$ and $u_h\in S_h(\Omega_1)$ satisfies
\[\tilde B_h(u-u_h,\chi)=0\quad\forall\chi\in S_h^0(\Omega_1).\]
Then the following estimate is true
\begin{equation}\label{ineq:localBtilde}
\begin{split}
  \|u-u_h\|_{0,\infty,\Omega_0}\lesssim& \inf_{\chi\in S_h(\Omega_1)}\Big\{\|u-\chi\|_{0,\infty,\Omega_1}+
  h^2\cdot\Lambda(h)\enorm{ u-\chi}_{2,\infty,h,\Omega_1}^\star\Big\}\\
  &\quad+\|u-u_h\|_{-s,q,\Omega_1},
\end{split}
\end{equation}
where \[\enorm{v}_{2,\infty,h,\mathcal A}^\star:=\enorm{v}_{2,\infty,h,\mathcal A}^\sim+\|v\|_{1,\infty,\mathcal A}+\max_{e\in\mathcal E_h}\|v\|_{0,\infty,e\cap\mathcal A}+
\max_{e\in\mathcal E_h}\left\|\avg{\frac{\partial v}{\partial n}}\right\|_{0,\infty,e\cap \mathcal A}.\]
\end{theorem}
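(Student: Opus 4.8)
The plan is to obtain the interior maximum norm estimate for the locally coercive form $\tilde B_h$ by combining the two auxiliary lemmas just proved (Lemma~\ref{lem:5.1} and Lemma~\ref{lem:5.2}) with the standard comparison against a Galerkin projection, in the spirit of the Nitsche--Schatz argument. First I would introduce a cut-off function and a comparison function $\chi\in S_h(\Omega_1)$ realizing (up to an arbitrary choice) the infimum on the right-hand side of \eqref{ineq:localBtilde}. Writing $u-u_h=(u-\chi)+(\chi-u_h)$, the term $\|u-\chi\|_{0,\infty,\Omega_0}$ is directly controlled by $\|u-\chi\|_{0,\infty,\Omega_1}$, so the real work is to estimate $\|\chi-u_h\|_{0,\infty,\Omega_0}$, where $\chi-u_h\in S_h$ and, by the hypothesis $\tilde B_h(u-u_h,\cdot)=0$, satisfies $\tilde B_h(\chi-u_h,v_h)=\tilde B_h(\chi-u,v_h)$ for all $v_h\in S_h^0(\Omega_1)$.

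\textbf{Key steps.} I would reduce to spheres $G_0\subset\subset\frac14 G\subset\subset G\subset\subset\Omega_1$ by the covering argument, so that it suffices to bound the pointwise value of a discrete function at a center $x_0$. Let $\tilde u$ be $u-\chi$ multiplied by an appropriate cut-off supported in $\frac12 G$, and let $\tilde u_h$ be the corresponding discrete solution of \eqref{ineq:5.1-cond}; then Lemma~\ref{lem:5.1} yields
\begin{equation*}
\|\tilde u-\tilde u_h\|_{0,\infty,\frac14G}\lesssim \|\tilde u\|_{0,\infty,G}+h^2\cdot\Lambda(h)\enorm{\tilde u}_{2,\infty,h,G}^\sim,
\end{equation*}
which produces exactly the two consistency terms $\|u-\chi\|_{0,\infty,\Omega_1}$ and $h^2\Lambda(h)\enorm{u-\chi}_{2,\infty,h,\Omega_1}^\star$ after accounting for the commutator terms generated by the cut-off (these are the lower-order derivative and edge contributions collected in the enhanced seminorm $\enorm{\cdot}_{2,\infty,h,\mathcal A}^\star$). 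The remaining discrete part $w_h:=\chi-u_h-\tilde u_h$ is $\tilde B_h$-orthogonal to $S_h^0(G)$, so Lemma~\ref{lem:5.2} gives the pointwise bound $|w_h(x_0)|\lesssim\|w_h\|_{-s,q,G}$, and this weak norm is in turn absorbed into $\|u-u_h\|_{-s,q,\Omega_1}$ plus the consistency terms via the triangle inequality and the approximation properties of $\chi$.

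\textbf{Main obstacle.} The delicate point is the bookkeeping of the commutator terms arising when the cut-off $\omega$ is applied: multiplying by $\omega$ does not commute with $\tilde B_h$, and the discrepancy $\tilde B_h(\omega(u-\chi),v_h)-\tilde B_h(u-\chi,\omega v_h)$ involves the gradient of $u-\chi$, its trace on edges, and the averages of its normal derivative across edges. These are precisely the extra pieces appearing in the definition of $\enorm{v}_{2,\infty,h,\mathcal A}^\star$, so the seminorm is engineered to close the estimate; the task is to verify that each commutator term is dominated by $h^2\Lambda(h)$ times one of these pieces, using trace and inverse inequalities on the mesh as in \eqref{eq:differnece}--\eqref{ineq:homest3}. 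A secondary subtlety is that the auxiliary problems \eqref{eq:5.3-1}--\eqref{eq:5.3-2} underlying Lemma~\ref{lem:5.1} and Lemma~\ref{lem:5.2} are posed with the coercive form $\tilde B$, so one must consistently work with the $\sim$-decorated norms throughout and only at the very end pass, if needed, back to quantities involving $u-u_h$ via the triangle inequality. Once the commutators are tamed, assembling \eqref{ineq:5.1} and \eqref{ineq:5.2} and taking the infimum over $\chi\in S_h(\Omega_1)$ gives \eqref{ineq:localBtilde}, and the covering argument completes the proof.
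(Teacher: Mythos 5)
Your proposal follows essentially the same route as the paper's proof: localize with a cut-off supported in $\frac12 G$ and equal to one on $\frac14 G$, compare against the local discrete Neumann solution $\tilde u_h$ and apply Lemma \ref{lem:5.1} (which is exactly where the commutator terms force the $\star$-seminorm), bound the remaining discrete part pointwise via Lemma \ref{lem:5.2}, and obtain the infimum form from the invariance of the hypothesis under $(u,u_h)\mapsto(u-\chi,u_h-\chi)$ — the paper merely postpones the introduction of $\chi$ to the last line rather than carrying it throughout. Two cosmetic slips: the discrete remainder should be $(\chi-u_h)+\tilde u_h$ rather than $\chi-u_h-\tilde u_h$ (with your sign the claimed orthogonality fails), and that orthogonality holds only against test functions in $S_h^0(\tfrac14 G)$, where the cut-off is identically one, not against all of $S_h^0(G)$.
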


\begin{proof}
  Let $G\subset\subset\Omega_1$ be a sphere of radius $R$ with center at $x_0\in\bar\Omega_0$, and $x_0$ is such that $\|u-u_h\|_{0,\infty,\Omega_0}=|(u-u_h)(x_0)|$.
Let $\omega\in C_0^\infty(\frac12G)$ with $\omega\equiv1$ on $\frac14G$ and set $\tilde u=\omega u$. Taking $\tilde u_h\in S_h(G)$ satisfies
\begin{equation}\label{ineq:main}
  \tilde B_h(\tilde u-\tilde u_h,v_h)=0\quad\forall v_h\in S_h(G).
\end{equation}
Then, we deduce by Lemma \ref{lem:5.1} that
\begin{equation}\label{ineq:main1}
\begin{aligned}
  |\tilde u(x_0)-\tilde u_h(x_0)|&\lesssim \|\tilde u\|_{0,\infty,G}+h^2\cdot\Lambda(h)\enorm{\tilde u}_{2,h,\infty,G}^\sim\\
  &\lesssim \|u\|_{0,\infty,G}+h^2\cdot\Lambda(h)\enorm u_{2,h,\infty,G}^\star.
  \end{aligned}
\end{equation}
For any $w_h\in S_h^0(\frac14G)$, there holds
\[\tilde B_h(\tilde u_h-u_h,w_h)=\tilde B_h(u-u_h,w_h)=0.\]
We employ Lemma \ref{lem:5.2}, Lemma \ref{lem:5.1} and \eqref{ineq:main1} to obtain
\begin{equation}\label{ineq:main2}
\begin{aligned}
  |(\tilde u_h-u_h)(x_0)|&\lesssim \|\tilde u_h-u_h\|_{-s,q,\frac14G}\\
  &\lesssim \|\tilde u_h-\tilde u\|_{0,\infty,\frac14G}+\|u-u_h\|_{-s,q,\frac14G}\\
  &\lesssim \|u\|_{0,\infty,G}+h^2\cdot\Lambda(h)\enorm u_{2,h,\infty,G}^\star+\|u-u_h\|_{-s,q,\frac14G}.
  \end{aligned}
\end{equation}
Using \eqref{ineq:main1} and \eqref{ineq:main2}, the triangle inequality implies that
\begin{equation}\label{ineq:main3}
\|u-u_h\|_{0,\infty,\Omega_0}\lesssim \|u\|_{0,\infty,\Omega_1}+h^2\cdot\Lambda(h)\enorm u_{2,h,\infty,\Omega_1}^\star+\|u-u_h\|_{-s,q,\Omega_1}.
\end{equation}
We rewrite $u-u_h=(u-\chi)-(u_h-\chi)$ for any $\chi\in S_h(\Omega_1)$, then \eqref{ineq:main3} completes the proof.
\end{proof}

Now we are ready to generalize Theorem \ref{lem:localBtilde} to the bilinear form $B(\cdot,\cdot)$ that is not locally positive definite. From now until the end of this section, $u$ and $u_h\in S_h(\Omega_1)$ satisfy
\begin{equation}\label{eq:mainsc4}
B_h(u-u_h,\chi)=0\quad\forall\chi\in S_h^0(\Omega_1).
\end{equation}
We first state the following lemma.
\begin{lemma}\label{lem:A-2}
Suppose $G_0\subset\subset G\subset\subset \Omega$ be concentric spheres and suppose  $u \in W^3_\infty(G)$, then we have the following estimate
\begin{equation}\label{ineq:A2}
  \|u-u_h\|_{0,\infty,G_0}\lesssim \|u\|_{3,\infty,G}+\|u-u_h\|_{0,G}.
\end{equation}
\end{lemma}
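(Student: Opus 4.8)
The plan is to transfer the problem from the non-coercive form $B_h$, for which no local maximum-norm estimate is available, to the locally coercive form $\tilde B_h$, and then invoke Theorem \ref{lem:localBtilde}. The bridge is the elementary identity coming from \eqref{eq:mainsc4}: for every $\chi\in S_h^0(G)$,
\[
\tilde B_h(u-u_h,\chi)=B_h(u-u_h,\chi)+K(u-u_h,\chi)=(u-u_h,\chi).
\]
Thus $u-u_h$ fails to be $\tilde B_h$-orthogonal to $S_h^0(G)$ only through the lower-order functional $(u-u_h,\cdot)$. I would absorb this discrepancy into a discrete corrector $z_h\in S_h^0(G)$ defined by $\tilde B_h(z_h,\chi)=(u-u_h,\chi)$ for all $\chi\in S_h^0(G)$, which is well posed by the Lax--Milgram lemma since $\tilde B_h$ is coercive. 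Setting $\hat u_h:=u_h+z_h\in S_h(\Omega_1)$, the two relations give $\tilde B_h(u-\hat u_h,\chi)=0$ for all $\chi\in S_h^0(G)$, so $\hat u_h$ meets exactly the hypothesis of Theorem \ref{lem:localBtilde}.

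Next I would apply Theorem \ref{lem:localBtilde} on concentric spheres $G_0\subset\subset G$ and choose $\chi=I_hu$ in the infimum. Since $u\in W_\infty^3(G)$ and $k\ge2$, the local interpolation estimate gives $\|u-I_hu\|_{0,\infty,G}\lesssim h^3|u|_{3,\infty,G}$ and $\enorm{u-I_hu}_{2,\infty,h,G}^\star\lesssim h\,|u|_{3,\infty,G}$, so that $h^2\Lambda(h)\enorm{u-I_hu}_{2,\infty,h,G}^\star\lesssim h^3\Lambda(h)|u|_{3,\infty,G}$. Checking the definition of $\Lambda(h)$ case by case, the quantity $h^3\Lambda(h)$ equals $h^{3/2}$ for $k=2$, $h^3\ln(1/h)$ for $k=3$, and $h^3$ for $k\ge4$, all bounded for $h$ small; hence every interpolation contribution is $\lesssim\|u\|_{3,\infty,G}$. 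Taking $s=0,\,q=2$ in the weak-norm remainder, Theorem \ref{lem:localBtilde} then yields
\[
\|u-\hat u_h\|_{0,\infty,G_0}\lesssim\|u\|_{3,\infty,G}+\|u-\hat u_h\|_{0,G}.
\]

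It remains to control the corrector $z_h$, and this is the crux. Testing its definition with $\chi=z_h$ and using the coercivity of $\tilde B_h$ with the piecewise Poincar\'e--Friedrichs inequality gives $\enorm{z_h}_{2,h,G}^\sim\lesssim\|u-u_h\|_{0,G}$, whence $\|z_h\|_{0,G}\lesssim\|u-u_h\|_{0,G}$. The delicate point, and the main obstacle, is the pointwise bound $\|z_h\|_{0,\infty,G_0}\lesssim\|u-u_h\|_{0,G}$ with no loss in powers of $h$ (a crude inverse inequality from $\|z_h\|_{0,G}$ would cost an $h^{-1}$ and fail). I see two routes. The direct one invokes the two-dimensional discrete Sobolev embedding of the broken-$H^2$ energy norm into $L^\infty$, giving $\|z_h\|_{0,\infty,G}\lesssim\enorm{z_h}_{2,h,G}^\sim\lesssim\|u-u_h\|_{0,G}$. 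The safer, self-contained route introduces the continuous dual solution $\zeta\in H_0^2(G)\cap H^4(G)$ of $\tilde B(\zeta,\psi)=(u-u_h,\psi)$; elliptic regularity (Lemma \ref{lem:rgl}) gives $\|\zeta\|_{4,G}\lesssim\|u-u_h\|_{0,G}$, hence $\zeta\in W_\infty^2(G)$ by Sobolev embedding, while the consistency of the C0IP discretization identifies $z_h$ as the $\tilde B_h$-Galerkin approximation of $\zeta$ in $S_h^0(G)$. A second application of Theorem \ref{lem:localBtilde} (now with $\chi=I_h\zeta$ and only $W_\infty^2$ regularity, which the $h^2\Lambda(h)$ factor still absorbs) bounds $\|\zeta-z_h\|_{0,\infty,G_0}\lesssim\|u-u_h\|_{0,G}$, and combining with $\|\zeta\|_{0,\infty,G_0}\lesssim\|u-u_h\|_{0,G}$ delivers the claim.

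Finally I would assemble the pieces. The triangle inequality $\|u-u_h\|_{0,\infty,G_0}\le\|u-\hat u_h\|_{0,\infty,G_0}+\|z_h\|_{0,\infty,G_0}$, together with $\|u-\hat u_h\|_{0,G}\le\|u-u_h\|_{0,G}+\|z_h\|_{0,G}\lesssim\|u-u_h\|_{0,G}$, yields \eqref{ineq:A2} for concentric spheres, and the general configuration $G_0\subset\subset G$ follows by the standard covering argument. To summarize, the whole argument is built on Theorem \ref{lem:localBtilde} and the interpolation/$\Lambda(h)$ bookkeeping; the only genuinely new difficulty is the $h$-robust maximum-norm estimate for the corrector $z_h$ introduced to repair the failure of local coercivity of $B_h$.
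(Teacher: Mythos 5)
Your proposal is correct and follows essentially the same strategy as the paper's proof: both absorb the discrepancy $K(u-u_h,\cdot)$ into a corrector that restores $\tilde B_h$-orthogonality, apply Theorem \ref{lem:localBtilde} twice (once to the corrected difference with $\chi=I_hu$, once to bound the corrector in $L^\infty$), and close with elliptic regularity plus an energy bound on the corrector. The only difference is one of packaging: you define the corrector $z_h$ discretely in $S_h^0(G)$ and identify it afterwards, via consistency, with the continuous Dirichlet solution $\zeta$ (so you invoke Lemma \ref{lem:rgl}), whereas the paper first solves the continuous Neumann problem $\tilde B(\psi,v)=K(u-u_h,v)$ and takes its Galerkin approximation $\psi_h\in S_h(G)$ (invoking Lemma \ref{lem:rglBtilde}); be aware that your alternative ``Route 1'' rests on a discrete Sobolev embedding of the broken $H^2$ energy norm into $L^\infty$ that the paper never establishes, so only your Route 2 is self-contained relative to the paper's toolkit.
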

\begin{proof}
For any $\chi\in S_h^0(G)$, we have $B_h(u-u_h,\chi)=0$, that is
\begin{equation}\label{eq:A2-1}
  \tilde B_h(u-u_h,\chi)=K(u-u_h,\chi).
\end{equation}
Let $\psi\in H^2(G)$ be the unique solution of
\begin{equation}\label{eq:A2-2}
  \tilde B(\psi,v)=K(u-u_h,v)\quad\forall v\in H^2(G),
\end{equation}
and $\psi_h\in S_h(G)$ satisfies $\tilde B_h(\psi-\psi_h,\chi)=0$ for any $\chi\in S_h(G)$. Consequently,
\begin{equation}\label{eq:A2-3}
  \tilde B_h(u-u_h-\psi_h,\chi)=0\quad\forall\chi\in S_h^0(G).
\end{equation}
Applying Theorem \ref{lem:localBtilde} to equation \eqref{eq:A2-3}, using basic approximation theory and Sobolev's lemma \cite{AR1975} to obtain
\begin{equation}\label{ineq:A2-1}
  \|u-u_h-\psi_h\|_{0,\infty,G_0}\lesssim \|u\|_{3,\infty,G}+\|u-u_h\|_{0,G}+\|\psi_h\|_{0,G}.
\end{equation}
Now we bound $\|\psi_h\|_{0,\infty,G_0}$. By Theorem \ref{lem:localBtilde}, interpolation error estimates and Sobolev embedding theorem, we claim that 
\begin{equation}\label{ineq:A2-2}
  \begin{split}
  \|\psi_h\|_{0,\infty,G_0}&\lesssim \|\psi-\psi_h\|_{0,\infty,G_0}+\|\psi\|_{0,\infty,G_0} \\
  & \lesssim \|\psi-I_h\psi\|_{0,\infty,G}+h^2\cdot\Lambda(h)\enorm{\psi-I_h\psi}_{2,h,\infty,G}^\star\\
  &\quad+\|\psi-\psi_h\|_{0,G}+\|\psi\|_{0,\infty,G_0} \\
  &\lesssim \|\psi\|_{4,G}+\|\psi-\psi_h\|_{0,G}.
  \end{split}
\end{equation}
It is obvious that
\begin{equation}\label{ineq:A2-3}
  \|\psi-\psi_h\|_{0,G}\lesssim  \|\psi\|_{3,G}.
\end{equation}
Whence combing \eqref{ineq:A2-2}, \eqref{ineq:A2-3} and the regularity estimate
$\|\psi\|_{4,G}\lesssim \|u-u_h\|_{0,G}$, we get
\begin{equation}\label{ineq:A2-4}
 \|\psi_h\|_{0,\infty,G_0}\lesssim \|u-u_h\|_{0,G}.
\end{equation}
Notice that $\|\psi_h\|_{0,G}\lesssim \enorm{\psi}_{2,h,G}^\sim\lesssim \|u-u_h\|_{0,G}$, then the lemma follows from \eqref{ineq:A2-1},
\eqref{ineq:A2-4} and the triangle inequality.
\end{proof}
From Lemma \ref{lem:A-2}, we have the following lemma.
\begin{lemma}\label{lem:A-1}
Under the same condition of Lemma \ref{lem:A-2}, we have
\begin{equation}\label{ineq:A1}
  \|u-u_h\|_{0,\infty,\frac14G}\lesssim \|u\|_{3,\infty,G}+\|u-u_h\|_{-s,q}.
\end{equation}
\end{lemma}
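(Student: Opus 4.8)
The plan is to obtain \eqref{ineq:A1} as a direct corollary of Lemma \ref{lem:A-2}, simply upgrading the strong $L^2$ error term $\|u-u_h\|_{0,G}$ on its right-hand side to the weak negative norm $\|u-u_h\|_{-s,q}$. The vehicle for this upgrade is the interior $L^2$ estimate of Lemma \ref{lem:locinestl2} (equivalently the global form \eqref{ineq:main1locinestl2} of Theorem \ref{thm:locinest}), which is exactly the tool that trades an $L^2$ error for a negative-norm error at the price of an $h$-power times a Sobolev norm of $u$. No new machinery is required; the work lies in the bookkeeping of concentric subdomains and elementary norm embeddings.

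First I would insert intermediate concentric spheres $\tfrac14 G\subset\subset G'\subset\subset G''\subset\subset G$, each adjacent pair separated by a distance $\gtrsim\kappa h$ (possible since $G$ is a fixed sphere of radius $R$ and the separations may be taken $\mathcal O(1)$). Applying Lemma \ref{lem:A-2} to the pair $\tfrac14 G\subset\subset G'$ yields
\[
\|u-u_h\|_{0,\infty,\frac14 G}\lesssim \|u\|_{3,\infty,G'}+\|u-u_h\|_{0,G'}\lesssim \|u\|_{3,\infty,G}+\|u-u_h\|_{0,G'},
\]
using the trivial monotonicity $\|u\|_{3,\infty,G'}\le\|u\|_{3,\infty,G}$. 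This already delivers the first term of \eqref{ineq:A1}, so it only remains to control $\|u-u_h\|_{0,G'}$.

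Next, since $u\in W^3_\infty(G)\hookrightarrow H^3(G)$ with $\|u\|_{3,G}\lesssim\|u\|_{3,\infty,G}$ on the bounded sphere $G$, and the Galerkin orthogonality \eqref{eq:mainsc4} holds on $\Omega_1$ which contains $G$, I would apply Lemma \ref{lem:locinestl2} with $l=3$ to the pair $G'\subset\subset G''$, obtaining
\[
\|u-u_h\|_{0,G'}\lesssim h^{\min\{3,k\}}\|u\|_{3,G''}+\|u-u_h\|_{-s,q,G''}\lesssim \|u\|_{3,\infty,G}+\|u-u_h\|_{-s,q}.
\]
Here I used $h<\tfrac12$ so that $h^{\min\{3,k\}}\le 1$ combines with the embedding to bound the first term by $\|u\|_{3,\infty,G}$, and I used $H_0^s(G'')\subset H_0^s(\Omega_1)$ (extension by zero) to give the monotonicity $\|\cdot\|_{-s,q,G''}\le\|\cdot\|_{-s,q}$ of the negative norm on the ambient domain. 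Combining the two displays completes the proof.

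The argument presents no genuine obstacle; it is a clean composition of two already-established interior estimates. The single point demanding care is the verification that the interior $L^2$ estimate is applicable under the minimal regularity at hand: $W^3_\infty(G)$ guarantees only $u\in H^3(G)$, which forces the choice $l=3$, the very lower end of the admissible range $3\le l\le k+1$ in Lemma \ref{lem:locinest}/\ref{lem:locinestl2}. One must therefore check that the resulting exponent $\min\{3,k\}$ is nonnegative—which it is, since $k\ge2$—so that the accompanying term is bounded by $\|u\|_{3,\infty,G}$ rather than contributing an inverse power of $h$.
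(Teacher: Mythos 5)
Your proposal is correct and follows essentially the same route as the paper: first apply Lemma \ref{lem:A-2} on a pair of intermediate concentric spheres to reduce to an interior $L^2$ error, then invoke Lemma \ref{lem:locinestl2} (with $l=3$, the minimal admissible regularity) to trade that $L^2$ error for the negative norm. The paper simply uses $\tfrac12G$ as the single intermediate sphere where you use $G'\subset\subset G''$, which is an inessential difference, and your extra care in checking the exponent $\min\{3,k\}\ge 2>0$ is a valid (if routine) verification of the same step.
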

\begin{proof}
  According to Lemma \ref{lem:A-2}, there holds
  \begin{equation}\label{ineq:AA-1-1}
    \|u-u_h\|_{0,\infty,\frac14G}\lesssim \|u\|_{3,\infty,\frac12G}+\|u-u_h\|_{0,\frac12G}.
  \end{equation}
  By means of Lemma \ref{lem:locinestl2}, we obtain
    \begin{equation}\label{ineq:AA1-2}
    \|u-u_h\|_{0,\frac12G}\lesssim \|u\|_{3,\infty,G}+\|u-u_h\|_{-s,q,G}.
  \end{equation}
Plugging \eqref{ineq:AA1-2} into \eqref{ineq:AA-1-1}, we complete the proof.
\end{proof}
We are now in a perfect position to show our main result in this section.
\begin{theorem}\label{thm:local}
The following estimate is true
\begin{equation}\label{ineq:local}
\begin{split}
  \|u-u_h\|_{0,\infty,\Omega_0}\lesssim& \inf_{\chi\in S_h(\Omega_1)}\Big\{\|u-\chi\|_{0,\infty,\Omega_1}+ h^2\cdot\Lambda(h)\enorm{ u-\chi}_{2,\infty,h,\Omega_1}^\star\Big\}\\
  &\quad+\|u-u_h\|_{-s,q,\Omega_1}.
\end{split}
\end{equation}
\end{theorem}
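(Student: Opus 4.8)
The plan is to reduce the estimate for the non-coercive form $B_h(\cdot,\cdot)$ to the already-established estimate for the enhanced coercive form $\tilde B_h(\cdot,\cdot)$ (Theorem \ref{lem:localBtilde}), by absorbing the zeroth-order perturbation $K(\cdot,\cdot)$ through an auxiliary dual problem, exactly in the spirit of the preceding Lemmas \ref{lem:A-2} and \ref{lem:A-1}. The key observation is that for any $\chi\in S_h^0(G)$ one has the algebraic identity $B_h(u-u_h,\chi)=0$ rewritten as $\tilde B_h(u-u_h,\chi)=K(u-u_h,\chi)$, so the defect between the two forms is a lower-order term that can be represented via a Galerkin projection $\psi_h$ of the solution $\psi$ to $\tilde B(\psi,v)=K(u-u_h,v)$. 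Then $u-u_h-\psi_h$ satisfies a genuine $\tilde B_h$-orthogonality relation on $S_h^0(\Omega_1)$, to which Theorem \ref{lem:localBtilde} applies directly.

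The steps, in order, are as follows. First I would fix $G\subset\subset\Omega_1$ with center $x_0\in\bar\Omega_0$ chosen so that $\|u-u_h\|_{0,\infty,\Omega_0}=|(u-u_h)(x_0)|$, reducing the global statement to a local one around $x_0$; the final covering argument then promotes the ball estimate to the stated $\Omega_0\subset\subset\Omega_1$ estimate. Second, I would introduce the dual solution $\psi$ and its Galerkin projection $\psi_h$ as in the proof of Lemma \ref{lem:A-2}, obtaining the $\tilde B_h$-orthogonality \eqref{eq:A2-3} for $u-u_h-\psi_h$. Third, I would apply Theorem \ref{lem:localBtilde} to $u-u_h-\psi_h$: this produces the infimum over $\chi\in S_h(\Omega_1)$ of $\|(u-\psi_h)-\chi\|_{0,\infty,\Omega_1}+h^2\Lambda(h)\enorm{(u-\psi_h)-\chi}_{2,\infty,h,\Omega_1}^\star$ plus a negative-norm remainder. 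Fourth, I would estimate the contribution of $\psi_h$: using Lemma \ref{lem:A-1} to bound $\|u-u_h\|_{0,G}$ by $\|u\|_{3,\infty}+\|u-u_h\|_{-s,q}$, then the regularity estimate $\|\psi\|_{4,G}\lesssim\|u-u_h\|_{0,G}$ together with the interpolation and Sobolev bounds from \eqref{ineq:A2-2}, to show that all the $\psi_h$-terms are dominated by $\|u-u_h\|_{-s,q,\Omega_1}$ (modulo the approximation terms already present in the infimum). Finally, rewriting $u-u_h=(u-\chi)-(u_h-\chi)$ and collecting terms recovers \eqref{ineq:local}.

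The main obstacle is the fourth step: controlling $\psi_h$ without reintroducing a term of the same strength as the left-hand side. One must verify that the Galerkin projection $\psi_h$ of the dual solution can be estimated in the $L^\infty$ norm on $G_0$ by $\|u-u_h\|_{0,G}$ alone (this is \eqref{ineq:A2-4}), which in turn hinges on the $H^4$ regularity bound $\|\psi\|_{4,G}\lesssim\|u-u_h\|_{0,G}$ for the enhanced free-boundary problem \eqref{eq:A2-2} and on a second application of Theorem \ref{lem:localBtilde} to $\psi-\psi_h$. The subtlety is that $\|u-u_h\|_{0,G}$ is genuinely weaker than $\|u-u_h\|_{0,\infty,\Omega_0}$, so the argument does not circle back on itself; Lemma \ref{lem:A-1}, which already converts this $L^2$ quantity into $\|u\|_{3,\infty}+\|u-u_h\|_{-s,q}$, is precisely what closes the loop. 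The remaining steps are routine triangle-inequality manipulations and the standard covering argument.
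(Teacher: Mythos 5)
There is a genuine gap, and it sits exactly where you flagged the ``main obstacle'': your step 4 cannot close the loop. In your reduction the dual problem is driven by the \emph{full} error, $\tilde B(\psi,v)=K(u-u_h,v)$, so after the regularity and stability estimates all the $\psi_h$-contributions collapse to $\|u-u_h\|_{0,G}$, and you propose to dispose of this via Lemma \ref{lem:A-1}. But Lemma \ref{lem:A-1} converts $\|u-u_h\|_{0,G}$ into $\|u\|_{3,\infty,G}+\|u-u_h\|_{-s,q}$, and the term $\|u\|_{3,\infty,G}$ is \emph{not} ``an approximation term already present in the infimum'': the infimum $\inf_{\chi}\{\|u-\chi\|_{0,\infty,\Omega_1}+h^2\Lambda(h)\enorm{u-\chi}_{2,\infty,h,\Omega_1}^{\star}\}$ tends to zero as $h\to 0$ for fixed smooth $u$ (it is $O(h^3\Lambda(h))$ already for $u\in W^3_\infty$), whereas $\|u\|_{3,\infty,G}$ is $O(1)$. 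So your argument proves only the weaker statement $\|u-u_h\|_{0,\infty,\Omega_0}\lesssim \inf_\chi\{\cdots\}+\|u\|_{3,\infty,\Omega_1}+\|u-u_h\|_{-s,q,\Omega_1}$, i.e.\ a bound of Lemma \ref{lem:A-1} strength rather than Theorem \ref{thm:local} strength, and it silently adds the hypothesis $u\in W^3_\infty$. This weaker form would be useless downstream: in Lemma \ref{lem:hessian} the theorem is applied with $u$ replaced by $H_h^{xx}u$ precisely to extract the rate $h^{k+1}\Lambda(h)$, which an $O(1)$ leftover term destroys.

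The paper's proof avoids this by localizing \emph{before} introducing the dual correction. It sets $\tilde u=\omega u$ and lets $\tilde u_h$ solve $\tilde B_h(\tilde u-\tilde u_h,\chi)=0$ for all $\chi\in S_h(G)$, so Lemma \ref{lem:5.1} bounds $\tilde u-\tilde u_h$ by $\|u\|_{0,\infty,G}+h^2\Lambda(h)\enorm{u}_{2,h,\infty,G}^{\star}$ --- terms compatible with the infimum. The $K$-perturbation then appears with data $\tilde u_h-\tilde u$ (already small), not $u-u_h$: one has $B_h(\tilde u_h-u_h,\chi)=-K(\tilde u_h-\tilde u,\chi)$, and the auxiliary $\psi$ solves the Dirichlet problem $B(\psi,v)=-K(\tilde u_h-\tilde u,v)$ on $G'=c_oG$. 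Lemma \ref{lem:A-1} is then applied to the pair $(\psi,\,\tilde u_h-u_h)$, so the problematic $W^3_\infty$-norm is $\|\psi\|_{3,\infty,G'}$, which elliptic regularity controls by $\|\tilde u-\tilde u_h\|_{0,\infty,G'}$ and hence by quantities of the correct order. In short: the crude $\|\cdot\|_{3,\infty}$ bound of Lemma \ref{lem:A-1} may only be spent on the auxiliary dual function, never on $u$ itself; your proposal spends it on $u$, and that is the step that fails.
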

\begin{proof}
Let $\omega\in C_0^\infty(\frac12G)$ with $\omega\equiv1$ on $\frac14G$, denote by $\tilde u=\omega u$ and by $\tilde u_h\in S_h(G)$ the solution of
\begin{equation}\label{eq:local1}
  \tilde B_h(\tilde u-\tilde u_h,\chi)=0\quad\forall \chi\in S_h(G).
\end{equation}
According to Lemma \ref{lem:5.1}, we obtain
\begin{equation}\label{ineq:local1}
  \|\tilde u-\tilde u_h\|_{0,\infty,\frac14G}\lesssim \|u\|_{0,\infty,G}+h^2\cdot\Lambda(h)\enorm u_{2,h,\infty,G}^\star.
\end{equation}
Next we have the equation from \eqref{eq:local1} and \eqref{eq:mainsc4} that
\begin{equation}\label{eq:local2}
  B_h(\tilde u_h- u_h,\chi)=-K(\tilde u_h- \tilde u,\chi)\quad \forall \chi\in S_h^0(\tfrac14G).
\end{equation}
Let $G' = c_oG$ with sufficiently small positive constant $c_o$ and let $\psi\in H_0^2(G')$ be such that
\begin{equation}\label{eq:local3}
  B(\psi,v)=-K(\tilde u_h-\tilde u,v)\quad\forall v\in H_0^2(G').
\end{equation}
Equations \eqref{eq:local2} and \eqref{eq:local3} lead to
\begin{equation}\label{eq:local4}
  B_h(\psi-(\tilde u_h- u_h),\chi)=0\quad\forall \chi\in S_h^0(G').
\end{equation}
We deduce from Lemma \ref{lem:A-1} and \eqref{ineq:local1} that
\begin{equation}\label{ineq:local2}
  \begin{split}
    & \|\psi-(\tilde u_h- u_h)\|_{0,\infty,\frac14G'}\\
   \lesssim & \|\psi\|_{3,\infty,G'}+\|\psi-(\tilde u_h-u_h)\|_{-s,q,G'}\\
    \lesssim &\|\psi\|_{3,\infty,G'}+\|u-u_h\|_{-s,q,\frac14G}+\|u\|_{0,\infty,G}+h^2\cdot\Lambda(h)\enorm u_{2,h,\infty,G}^\star.
  \end{split}
\end{equation}
Then by the triangle inequality, \eqref{ineq:local1} and \eqref{ineq:local2}, we have
\begin{equation}\label{ineq:local3}
  \|u-u_h\|_{0,\infty,\frac14G'}\lesssim \|\psi\|_{3,\infty,G'}+\|u-u_h\|_{-s,q,\frac14G}+\|u\|_{0,\infty,G}
  +h^2\cdot\Lambda(h)\enorm u_{2,h,\infty,G}^\star.
\end{equation}
By elliptic regularity Lemma \ref{lem:rglBtilde} and \eqref{ineq:local1}, it holds that
\begin{equation}\label{ineq:local4}
  \|\psi\|_{3,\infty,G'}\lesssim \|\tilde u-\tilde u_h\|_{0,\infty,G'}\lesssim \|u\|_{0,\infty,G}+h^2\cdot\Lambda(h)\enorm u_{2,h,\infty,G}^\star.
\end{equation}
Thus we finally get from \eqref{ineq:local3} and \eqref{ineq:local4} that
\begin{equation}\label{ineq:local5}
   \|u-u_h\|_{0,\infty,\frac14G'}\lesssim \|u\|_{0,\infty,G}+h^2\cdot\Lambda(h)\enorm u_{2,h,\infty,G}^\star+\|u-u_h\|_{-s,q,G}.
\end{equation}
The proof is completed by rephrasing $u-u_h=(u-\chi)-(u_h-\chi)$ for any $\chi\in S_h(\Omega_1)$ and using \eqref{ineq:local5}.
\end{proof}
As a direct consequence of Theorem \ref{thm:local}, we have the following theorem.
\begin{theorem}\label{thm:locinfty}
   Let $\Omega_0\subset\subset\Omega_1\subset\subset \Omega$ be separated by $d=\mathcal O(1)$, and $u\in W_\infty^l(\Omega_1)$, we have
  \begin{equation}\label{ineq:locinfty}
   \|u-u_h\|_{0,\infty,\Omega_0}\lesssim h^l\cdot\Lambda(h)\|u\|_{l,\infty,\Omega_1}+\|u-u_h\|_{-s,q,\Omega_1},
  \end{equation}
  for some $s \geq 0$ and $q \geq 1$.
\end{theorem}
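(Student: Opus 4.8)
The plan is to derive Theorem~\ref{thm:locinfty} as a direct specialization of Theorem~\ref{thm:local}, choosing the free element $\chi \in S_h(\Omega_1)$ to be the nodal interpolant $I_h u$ so that the infimum on the right-hand side of \eqref{ineq:local} becomes a concrete interpolation error that I can bound by the standard estimates already recorded in the excerpt. First I would introduce an intermediate domain $\Omega_0 \subset\subset \Omega_{1/2} \subset\subset \Omega_1$ (possible because the separation is $d = \mathcal O(1)$) to absorb the fact that Theorem~\ref{thm:local} is stated with generic concentric nesting; then I would apply \eqref{ineq:local} with $\chi = I_h u$ to write
\begin{equation*}
\|u - u_h\|_{0,\infty,\Omega_0} \lesssim \|u - I_h u\|_{0,\infty,\Omega_1} + h^2 \cdot \Lambda(h)\, \enorm{u - I_h u}_{2,\infty,h,\Omega_1}^\star + \|u - u_h\|_{-s,q,\Omega_1}.
\end{equation*}

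The second step is to control the two interpolation terms. The $L^\infty$ term is immediate from the interpolation error estimate with $p = \infty$, $j = 0$, and $s = l$, giving $\|u - I_h u\|_{0,\infty,\Omega_1} \lesssim h^l \|u\|_{l,\infty,\Omega_1}$. For the star-seminorm I would unpack $\enorm{\cdot}_{2,\infty,h,\Omega_1}^\star$ into its constituent pieces — the second-derivative interior part $\enorm{\cdot}_{2,\infty,h,\Omega_1}^\sim$, the $W^1_\infty$ part, and the two edge terms — and bound each by the same interpolation estimate with the appropriate choice of $j$. The dominant contribution comes from the second-order derivative pieces, where $j = 2$ yields $h^{l-2}\|u\|_{l,\infty,\Omega_1}$; after multiplication by the prefactor $h^2 \cdot \Lambda(h)$ this becomes $h^l \cdot \Lambda(h)\|u\|_{l,\infty,\Omega_1}$, which matches the target rate. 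The edge/jump terms require a trace inequality on element boundaries combined with the interpolation bound, and the inverse scaling by $|E|^{-1}$ in the jump term is exactly compensated by the $|E| \sim h$ measure factor, so no loss of power occurs; these contribute at the same or higher order and are therefore harmless.

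Collecting the estimates, the first two terms both reduce to $h^l \cdot \Lambda(h)\|u\|_{l,\infty,\Omega_1}$ (using $\Lambda(h) \ge 1$ so that the pure $h^l$ term from the $L^\infty$ interpolation is dominated), and the negative-norm term $\|u - u_h\|_{-s,q,\Omega_1}$ is carried through verbatim, yielding exactly \eqref{ineq:locinfty}. I expect the main obstacle to be the bookkeeping on the star-seminorm: one must verify that every edge term in the definition of $\enorm{\cdot}_{2,\infty,h,\Omega_1}^\star$ — including the averaged normal derivative $\avg{\partial v / \partial n}$ and the raw boundary values $\|v\|_{0,\infty,e}$ — admits an interpolation bound of order at least $h^{l-2}$ in the $L^\infty$ norm, so that after the $h^2 \Lambda(h)$ prefactor nothing degrades the final rate. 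This is conceptually routine but demands care because the seminorm mixes derivatives of different orders on elements and on edges; the key is that each edge trace of $u - I_h u$ or its derivatives, measured in $L^\infty$, inherits the interpolation rate from the elementwise estimate via a discrete trace inequality without any negative power of $h$ surviving, which guarantees the clean $h^l \cdot \Lambda(h)$ outcome.
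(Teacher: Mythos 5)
Your proposal is correct and follows exactly the route the paper intends: the paper states Theorem~\ref{thm:locinfty} as ``a direct consequence of Theorem~\ref{thm:local},'' i.e.\ take $\chi=I_hu$ in \eqref{ineq:local} and bound the resulting interpolation terms, which is precisely what you do. One small note: in the jump term the factor $|E|^{-1}$ is offset not by an $|E|$ measure factor (there is none in the $L^\infty$ edge norm) but by the fact that $\jp{\partial(u-I_hu)/\partial n}$ involves a first-order derivative, whose interpolation error is $\mathcal O(h^{l-1})$, so that $|E|^{-1}\cdot h^{l-1}=h^{l-2}$ matches the element second-derivative contribution.
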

\section{Superconvergence analysis}
\label{sec:error}
In this section, we shall establish the superconvergent result for the recovered Hessian matrix $H_hu_h$ on translation-invariant meshes for equation \eqref{equ:c0ip}. Our main theoretical analysis tool is superconvergence by difference quotient, as discussed in \cite{NS1974, Wa1995}. This is possible partially due to  the pointwise error estimates for the C0IP method for biharmonic equations, as shown in Theorems \ref{thm:locinfty} and \ref{thm:locinest}.

\begin{figure}[!h]
   \centering
  \subcaptionbox{\label{fig:quadratic}}
   {\includegraphics[width=0.47\textwidth]{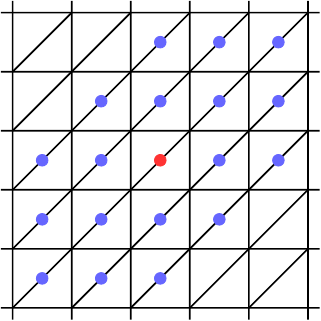}}
   \subcaptionbox{\label{fig:cubic}}
   {\includegraphics[width=0.47\textwidth]{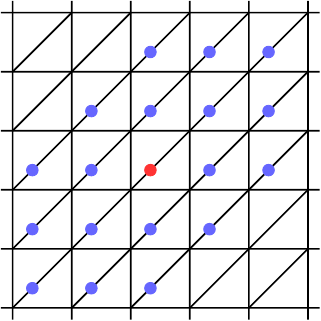}}
   \caption{Sampling point selection: (a) quadratic element;
    (b) cubic element.}\label{fig:sample}
\end{figure}

The key observation is that $H_h$ can be viewed as a finite difference operator. As explained in Remark \ref{rem:sample}, the selection  of sampling points in $\mathcal K_z$ is flexible. To adopt the argument of superconvergence by difference quotient, we chose the sampling points of the same type as the assembly point $z$. It's essential to emphasize that this restriction is solely for theoretical purposes and not for computational purposes. In Figure \ref{fig:sample}, we demonstrate two typical patterns in quadratic and cubic elements to illustrate how to choose sampling points of the same type in regularly patterned uniform meshes. Similar ideas are applicable to other translation-invariant meshes, such as chevron, criss-cross, union-Jack, and equilateral patterned uniform meshes. Then, we can express the recovered second-order derivative as
\begin{equation}\label{equ:diffq}
\left(H_h^{ab} u_h\right)(z)=\sum_{|\nu| \leq M} \sum_{i=1}^6 C_{\nu, h}^i u_h\left(z+\nu h \ell_i\right) = \sum_{|\nu| \leq M} \sum_{i=1}^6 C_{\nu, h}^iT_{\nu \tau}^{\ell_i} u_h(z),
\end{equation}
for $a, b\in \{x, y\}$, some nature number $M$,  and some direction $\ell_i$.

Based on the above observation, we can establish the following estimation for the Hessian recovery operator $H_h$.
\begin{lemma}\label{lem:hessian}
Let $\Omega_1 \subset \subset \Omega$ be separated by $d=\mathcal O(1)$; let the finite element space $S_h$ be translation invariant in the directions required by the Hessian recovery operator $H_h$ on $\Omega_1$; and let $u \in W_{\infty}^{k+1}(\Omega)$.  
 Then  on any interior region $\Omega_0 \subset\subset \Omega_1$, we have
\begin{equation}\label{equ:error}
	\|H_h(u-u_h)\|_{0, \infty, \Omega_0} \lesssim h^{k+1}\cdot\Lambda(h)\|u\|_{k+1,\infty,\Omega}+\|u-u_h\|_{-s,q,\Omega}
\end{equation}
for some $s \geq 0$ and $q \geq 1$.
\end{lemma}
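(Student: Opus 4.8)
The plan is to read off from \eqref{equ:diffq} that each scalar component $H_h^{ab}$ acts on a continuous function as the finite difference operator $D_h:=\sum_{|\nu|\le M}\sum_{i=1}^{6} C_{\nu,h}^i T_{\nu\tau}^{\ell_i}$, so that $H_h^{ab}(u-u_h)=D_h e$ with $e=u-u_h$. Writing $v:=D_h u$ and $v_h:=D_h u_h$, the translation invariance of the mesh guarantees $v_h\in S_h$ on any $\Omega'\subset\subset\Omega_1$, while $v$ is as smooth as $u$; thus $v-v_h=H_h^{ab}(u-u_h)$ and it suffices to estimate $\|v-v_h\|_{0,\infty,\Omega_0}$ for each pair $a,b\in\{x,y\}$, then take the maximum.

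The crucial structural step is that $v_h$ is an \emph{interior Galerkin approximation} of $v$. Because the mesh, and hence $B_h$, is translation invariant, one has $B_h(T_{\nu\tau}^{\ell_i}\phi,\chi)=B_h(\phi,T_{-\nu\tau}^{\ell_i}\chi)$. Consequently, for any $\chi\in S_h^0$ supported well inside $\Omega_1$ (so that every translate $T_{-\nu\tau}^{\ell_i}\chi$ remains in $S_h^0(\Omega_1)$), I would compute $B_h(v-v_h,\chi)=\sum_{\nu,i}C_{\nu,h}^i B_h(e,T_{-\nu\tau}^{\ell_i}\chi)=B_h(e,D_h^{\ast}\chi)=0$, where $D_h^{\ast}:=\sum_{\nu,i}C_{\nu,h}^i T_{-\nu\tau}^{\ell_i}$ is the adjoint difference operator and the final equality is the Galerkin orthogonality \eqref{eq:mainsc4}. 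Hence $v-v_h$ satisfies the hypotheses of the interior maximum norm estimate on a concentric shrinking of $\Omega_1$.

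I would then apply Theorem \ref{thm:local} (equivalently Theorem \ref{thm:locinfty}) to $v-v_h$. The weak term $\|v-v_h\|_{-s,q}=\|D_h e\|_{-s,q}$ is transferred onto the test function: pairing with a dual function and moving the differences through $(D_h e,\phi)=(e,D_h^{\ast}\phi)$ gives $\|D_h e\|_{-s,q}\lesssim\|e\|_{-(s-2),q}$, since $D_h^{\ast}$ is a consistent second difference; taking $s$ large this is absorbed into the admissible term $\|u-u_h\|_{-s',q,\Omega}$. For the approximation term $\inf_{\chi\in S_h}\{\|v-\chi\|_{0,\infty}+h^2\Lambda(h)\enorm{v-\chi}_{2,\infty,h}^{\star}\}$ I would take $\chi=D_h(I_hu)\in S_h$, so that $v-\chi=D_h(u-I_hu)$ is the difference quotient of the interpolation error; here the polynomial reproduction built into the recovery (it reproduces $D^2p$ for $p\in\mathbb{P}_{k+1}$, cf. Theorem \ref{thm:pp}) together with the vanishing of $u-I_hu$ at the sampling nodes must be exploited to extract the order $h^{k+1}\cdot\Lambda(h)\|u\|_{k+1,\infty}$. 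A standard covering argument then upgrades the single-sphere estimate to a general $\Omega_0\subset\subset\Omega_1$.

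The main obstacle is precisely the rate bookkeeping in the approximation term: the operator $D_h$ carries a factor $h^{-2}$, so any termwise bound loses two orders and yields only $h^{k-1}$. The superconvergence must therefore come from genuinely using the Galerkin/translation-invariant structure, namely the high reproduction order of $H_h$ and the cancellation of $u-I_hu$ at the stencil nodes, rather than from estimating $D_h$ directly; making this cancellation quantitative, and verifying that it survives the passage through the only-locally-coercive form $B_h$ (which is why the auxiliary form $\tilde B_h$ and the estimates of Section \ref{sec:interiorestimate} are needed), is the delicate heart of the argument. The remaining ingredients, the translation invariance of the admissible mesh patterns, the interior estimates of Theorems \ref{thm:locinest} and \ref{thm:locinfty}, and the consistency of $H_h$, are already available.
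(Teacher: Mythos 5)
Your proposal follows the paper's own proof essentially step for step: you interpret $H_h^{ab}$ via \eqref{equ:diffq} as a difference operator, use translation invariance of the mesh together with Galerkin orthogonality to get $B_h(H_h^{ab}(u-u_h),\chi)=B_h(u-u_h,(H_h^{ab})^*\chi)=0$ for $\chi\in S_h^0(\Omega_1)$, and then apply Theorem \ref{thm:local} to the pair $(H_h^{ab}u,\,H_h^{ab}u_h)$. The only place you deviate is the negative-norm term: you absorb $\|H_h^{ab}(u-u_h)\|_{-s,q,\Omega_1}$ directly into $\|u-u_h\|_{-(s-2),q,\Omega}$ via $(H_h^{ab}e,\phi)=(e,(H_h^{ab})^*\phi)$ and $\|(H_h^{ab})^*\phi\|_{s-2,q',\Omega}\lesssim\|\phi\|_{s,q',\Omega_1}$, whereas the paper bounds $(e,(H_h^{ab})^*\phi)\lesssim\|e\|_{0,\infty,\Omega_1+Mh}$, using the uniform $L^1$ bound on $(H_h^{ab})^*\phi$, and then invokes Theorem \ref{thm:local} a second time. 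Both variants are legitimate, and yours is the shorter one, since the statement only requires the estimate to hold ``for some $s\geq0$, $q\geq1$''.

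The step you leave open is the approximation term, and two things should be said about it. First, the paper does no more than you do: it dismisses $\inf_{v\in S_h(\Omega_1)}\big\{\|H_h^{ab}u-v\|_{0,\infty,\Omega_1}+h^2\Lambda(h)\enorm{H_h^{ab}u-v}_{2,\infty,h,\Omega_1}^\star\big\}\lesssim h^{k+1}\Lambda(h)\|u\|_{k+1,\infty,\Omega_1}$ with the single phrase ``by using standard approximation theory''. Second, your diagnosis of the obstruction is sound and not merely presentational: with $v=H_h^{ab}(I_hu)$ (which on a translation-invariant mesh coincides with $I_h(H_h^{ab}u)$, since the sampling points are nodes), a termwise estimate gives only $h^{-2}\cdot h^{k+1}|u|_{k+1,\infty}=h^{k-1}|u|_{k+1,\infty}$; interpolating $H_h^{ab}u$ directly is no better, because the commutation $D^\alpha(H_h^{ab}u)=H_h^{ab}(D^\alpha u)$ together with $\|H_h^{ab}w\|_{0,\infty}\lesssim\|w\|_{2,\infty}$ controls only $k-1$ derivatives of $H_h^{ab}u$ when $u\in W_\infty^{k+1}$. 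The two missing orders can only be recovered by the classical translation-invariance expansion: the local profile of $u-I_hu$ is the same at all translated sampling points, so the stencil acts on a smooth modulation of that profile and its degree-$(k+1)$ reproduction property (Theorem \ref{thm:pp}) can bite. That expansion, however, costs two additional derivatives of $u$ --- this is exactly the $W_\infty^{k+1+t}$ bookkeeping of superconvergence by difference quotients with $t=2$, so the $h^{k+1}\Lambda(h)$ rate is naturally a $W_\infty^{k+3}$ statement (or $h^{k}\Lambda(h)$ under $W_\infty^{k+2}$, which is what Theorem \ref{equ:super} actually consumes). In short: your proposal is incomplete at precisely this point, but the gap you flag sits equally in the paper's one-line treatment, and closing it requires either carrying out this expansion under strengthened regularity or weakening the claimed exponent; everything else in your outline matches the paper's argument.
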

\begin{proof}
	Note that  the bilinear form $B_h(\cdot,\cdot)$ are associated with constant coefficient fourth-order elliptic equation \eqref{equ:model}. The definition of translation invariant finite element space \eqref{equ:traninvariant} and the Galerkin orthogonality \eqref{equ:galorth} imply that
	\begin{equation}
	B_h(T_{\nu \tau}^{\ell}(u-u_h), v_h)=B_h(u-u_h, T_{-\nu \tau}^{\ell} v_h)=B_h(u-u_h,(T_{\nu \tau}^{\ell})^* v_h)=0,
\end{equation}
for any $v_h\in S_h^0(\Omega_1)$. From \eqref{equ:diffq}, we can deduce that
\begin{equation}
		B_h(H_h^{xx}(u-u_h), v_h)=B_h(u-u_h, (H_h^{xx})^* v_h)=0 \quad \forall v_h\in S_h^0(\Omega_1).
\end{equation}

Then, Theorem \ref{thm:local} implies that
\begin{equation}\label{equ:hessianerror}
\begin{split}
	&\|H_h^{xx}(u-u_h)\|_{0, \infty, \Omega_0} \\
\lesssim &  \inf_{v\in S_h(\Omega_1)}\Big\{\|H_h^{xx}u-v\|_{0,\infty,\Omega_1}+ h^2\cdot\Lambda(h)\enorm{H_h^{xx}u-v}_{2,\infty,h,\Omega_1}^\star\Big\}\\
&+\|H_h^{xx}(u-u_h)\|_{-s,q,\Omega_1}.
\end{split}
\end{equation}
The first term in the righthand of \eqref{equ:hessianerror} can be bounded by $h^{k+1}\cdot\Lambda(h)\|u\|_{k+1,\infty,\Omega_1}$ by using standard approximation theory. For the second term, we observe that
\begin{equation}\label{eq:Hxxdual}
  \|H_h^{xx}(u-u_h)\|_{-s,q,\Omega_1}=\sup_{\phi\in C_0^\infty(\Omega_1),\|\phi\|_{s,q',\Omega_1}=1}(H_h^{xx}(u-u_h),\phi)
\end{equation}
with $\frac1q+\frac{1}{q'}=1$. Let $\Omega_1+Mh$ be a subdomain  stretches out $Mh$ from $\Omega_1$,
 we use the fact that $\|(H_h^{xx})^*\phi\|_{0,1,\Omega_1+Mh}$ is bounded uniformly with respect to $h$ when $s\geq1$ to obtain
\begin{equation}\label{ineq:Hxx}
  \begin{split}
     (H_h^{xx}(u-u_h),\phi)&=(u-u_h,(H_h^{xx})^*\phi)\\
     &\lesssim \|u-u_h\|_{0,\infty,\Omega_1+Mh}\|(H_h^{xx})^*\phi\|_{0,1,\Omega_1+Mh}\\
     &\lesssim \|u-u_h\|_{0,\infty,\Omega_1+Mh}.
  \end{split}
\end{equation}
Applying Theorem \ref{thm:local} again, we have
\begin{equation}\label{ineq:localHxx}
\begin{split}
  \|u-u_h\|_{0,\infty,\Omega_1+Mh}\lesssim& \inf_{v\in S_h(\Omega_1)}\Big\{\|u-v\|_{0,\infty,\Omega}+ h^2\cdot\Lambda(h)\enorm{ u-v}_{2,\infty,h,\Omega}^\star\Big\}\\
  &\quad+\|u-u_h\|_{-s,q,\Omega}.
\end{split}
\end{equation}
Consequently, we combine \eqref{equ:hessianerror}-\eqref{ineq:localHxx} and  approximation theory to get
\[\|H_h^{xx}(u-u_h)\|_{0, \infty, \Omega_0}\lesssim h^{k+1}\cdot\Lambda(h)\|u\|_{k+1,\infty,\Omega}+\|u-u_h\|_{-s,q,\Omega}.\]
We can obtain similar error estimates for the $H_h^{xy}, H_h^{yx}$, and $H_h^{yy}$ by using the same argument. Thus, the error bound also holds when we replace $H_h^{xx}$ in $H_h$, which completes our proof.
\end{proof}

With the above preparation, we are now in the position to prove our main superconvergence results.
\begin{theorem}\label{equ:super}
	Let $\Omega_1 \subset \subset \Omega$ be separated by $d=\mathcal O(1)$; let the finite element space $S_h$ be translation invariant in the directions required by the Hessian recovery operator $H_h$ on $\Omega_1$; and let $u \in W_{\infty}^{k+2}(\Omega)$.
Then  on any interior region $\Omega_0 \subset\subset \Omega_1$,  we have for $k\geq3$ that
	\begin{equation}\label{equ:sup}
		\|Hu - H_hu_h\|_{0, \infty, \Omega_0} \lesssim h^{k}\|u\|_{k+2, \infty, \Omega}+\|u-u_h\|_{-s, q, \Omega},
	\end{equation}
for some $s \geq 0$ and $q \geq 1$.
\end{theorem}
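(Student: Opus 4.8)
The plan is to reduce the recovery error to two already-established estimates through the splitting
\[
Hu - H_hu_h = \big(Hu - H_hu_I\big) + H_h(u - u_h), \qquad u_I := I_hu,
\]
together with the enabling structural observation that $H_h$ is the finite-difference operator \eqref{equ:diffq}, which samples only nodal values. Because we have chosen the sampling points of the same type as the assembly node (Remark \ref{rem:sample}), the stencil points $z+\nu h\ell_i$ are themselves nodes, and since $u_I$ interpolates $u$ at every node we get the pointwise identity $H_hu = H_hu_I$. Consequently $H_hu_I - H_hu_h = H_h(u-u_h)$ is exactly the quantity controlled in Lemma \ref{lem:hessian}, while the remaining bracket is the pure consistency error $Hu - H_hu_I$ governed by Theorem \ref{thm:pp}.

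First I would control the consistency term. Applying the general estimate of Theorem \ref{thm:pp} on each patch $\mathcal K_z$ as $z$ ranges over the nodes lying in $\Omega_0$, and then summing by a standard covering argument that absorbs the local seminorms $|u|_{k+2,\infty,\mathcal K_z}$ into a global norm, gives
\[
\|Hu - H_hu_I\|_{0,\infty,\Omega_0} \lesssim h^{k}\|u\|_{k+2,\infty,\Omega}.
\]
The mild over-regularity $u\in W_\infty^{k+2}(\Omega)$ assumed in the theorem is precisely what this branch of Theorem \ref{thm:pp} requires; note that I only use the first (non-symmetric) estimate, so no symmetry hypothesis on the nodes is needed here.

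Next I would invoke Lemma \ref{lem:hessian} directly for the discretization term, obtaining
\[
\|H_h(u-u_h)\|_{0,\infty,\Omega_0} \lesssim h^{k+1}\cdot\Lambda(h)\,\|u\|_{k+1,\infty,\Omega} + \|u-u_h\|_{-s,q,\Omega},
\]
and then simply add the two bounds. The only delicate point, and the reason the statement is restricted to $k\geq 3$, is verifying that $h^{k+1}\Lambda(h)\lesssim h^{k}$. For $k=3$ one has $\Lambda(h)=\ln(1/h)$, so $h^{k+1}\Lambda(h)=h^{4}\ln(1/h)\lesssim h^{3}$; for $k\geq 4$ one has $\Lambda(h)=1$, so $h^{k+1}\lesssim h^{k}$. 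In either case the discretization term is dominated by $h^{k}\|u\|_{k+2,\infty,\Omega}$ (using $\|u\|_{k+1,\infty,\Omega}\le\|u\|_{k+2,\infty,\Omega}$), and combining with the consistency bound yields \eqref{equ:sup}. By contrast, for $k=2$ the factor $\Lambda(h)=h^{-3/2}$ would leave $h^{k+1}\Lambda(h)=h^{3/2}$, which is of strictly larger order than the target $h^{2}$, so this route genuinely breaks down there. This absorption of the logarithmic/negative-power factor $\Lambda(h)$ is the crux of the argument and the precise origin of the hypothesis $k\geq 3$.
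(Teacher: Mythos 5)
Your proposal is correct and takes essentially the same route as the paper: the paper decomposes $Hu - H_hu_h = (Hu - H_hu_I) + (H_hu_I - H_hu) + (H_hu - H_hu_h)$, bounds the first term by Theorem \ref{thm:pp}, the middle term by approximation theory, and the last term by Lemma \ref{lem:hessian}, and then (implicitly) absorbs $h^{k+1}\Lambda(h)\lesssim h^k$ for $k\geq 3$ exactly as you do. Your observation that $H_hu = H_hu_I$ (because $H_h$ samples only nodal values, where $u_I$ and $u$ agree) simply collapses the paper's middle term, so your two-term splitting is a harmless streamlining of the same argument rather than a different approach.
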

\begin{proof}
 To establish the superconvergence of recovered Hessian matrix $H_hu_h$, we decompose the error as
 \begin{equation}\label{equ:decomp}
 	Hu - H_hu_h = (Hu  - H_hu_I) + (H_hu_I-H_hu) + (H_hu - H_hu_h).
 \end{equation}
 with $u_I=I_hu$ and we shall estimate errors term by term  in  \eqref{equ:decomp}. The first term can be estimated by using the
 consistency of $H_h$ as in Theorem \ref{thm:pp}, namely,
 \begin{equation}\label{equ:firstt}
 	\|Hu  - H_hu_I\|_{0, \infty, \Omega_0} \lesssim h^{k}\|u\|_{k+2, \infty, \Omega_0}.
 \end{equation}
 The second term can be bounded by the approximation theory as
 \begin{equation}\label{equ:secondt}
	\|H_hu_I-H_hu\|_{0,\infty,\Omega_0}\lesssim h^{k+1}\|u\|_{k+1,\infty,\Omega_0}.
\end{equation}
For the last term, we have
\begin{equation}\label{equ:thirdt}
	\|H_h\left(u-u_h\right)\|_{0, \infty, \Omega_0} \lesssim h^{k+1}\cdot\Lambda(h)\|u\|_{k+1, \infty, \Omega}+\|u-u_h\|_{-s, q, \Omega},
\end{equation}
 by Lemma \ref{lem:hessian}. Substituting \eqref{equ:firstt} - \eqref{equ:thirdt} into \eqref{equ:decomp} gives
 the desired results.
 \end{proof}
\begin{remark}\label{rmk:order}
	Compared with the superconvergence results of Hessian recovery for second-order elliptic equations, the observed superconvergence rate is $\mathcal{O}(h^{k})$ instead of $\mathcal{O}(h^{k+1})$. This limitation is due to the convergence rate of $Hu-H_hu_I$. In the specific case when $k=2$, we can only assert a superconvergence rate of $\mathcal{O}(h^{\frac32})$ based on Lemma \ref{lem:hessian}. However, numerical tests indicate that $H_hu_h$ exhibits superconvergence towards $D^2u$ in $L^\infty$ norm with a superconvergence rate of $\mathcal O(h^2).$
\end{remark}

Since the Hessian recovery operator is polynomial preserving, similar as Theorem \ref{thm:pp}, we have
    \begin{equation*}
      \|H u-H_hu_I\|_{0, \Omega}\lesssim h^{k}\|u\|_{k+2,\Omega}
    \end{equation*}
for any $u\in H^{k+2}(\Omega)$, see also section 2.4 in \cite{NZ2005} for a similar estimate for gradient recovery operator. Then using the interior
estimate in $L^2$ norm \eqref{ineq:main1locinestl2}  and an analogous argument of Theorem \ref{equ:super}, we have the following $\mathcal O(h^k)$ superconvergence result in $L^2$ norm,
which is also valid for $k=2$.
\begin{theorem}\label{equ:superL2}
	Let $\Omega_1 \subset \subset \Omega$ be separated by $d=\mathcal O(1)$; let the finite element space $S_h$ be translation invariant in the directions required by the Hessian recovery operator $H_h$ on $\Omega_1$; and let $u \in H^{k+2}(\Omega)$.
Then  on any interior region $\Omega_0 \subset\subset \Omega_1$, we have
	\begin{equation}\label{equ:supL2}
		\|Hu - H_hu_h\|_{0, \Omega_0} \lesssim h^{k}\|u\|_{k+2, \Omega}+\|u-u_h\|_{-s, q, \Omega}
	\end{equation}
for some $s \geq 0$ and $q \geq 1$.
\end{theorem}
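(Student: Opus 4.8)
My plan is to run the proof of Theorem~\ref{equ:super} almost verbatim, replacing each maximum-norm ingredient by its $L^2$ counterpart; the only essential new input is the interior $L^2$ estimate \eqref{ineq:main1locinestl2}, which, unlike its maximum-norm analogue Theorem~\ref{thm:local}, carries no factor $\Lambda(h)$. This is precisely what will let the argument reach down to $k=2$. I would begin from the same splitting as \eqref{equ:decomp},
\[
Hu-H_hu_h=(Hu-H_hu_I)+(H_hu_I-H_hu)+(H_hu-H_hu_h),\qquad u_I=I_hu,
\]
and treat the three pieces in turn. The first is dealt with by the polynomial-preservation estimate recalled just before the statement, $\|Hu-H_hu_I\|_{0,\Omega}\lesssim h^{k}\|u\|_{k+2,\Omega}$, which already supplies the claimed $h^{k}$ rate and, carrying no $\Lambda(h)$, is valid for $k=2$. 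The second is a pure approximation term: since $H_h$ acts as the fixed stencil \eqref{equ:diffq} and $u-u_I$ is of interpolation order, one obtains $\|H_hu_I-H_hu\|_{0,\Omega_0}\lesssim h^{k+1}\|u\|_{k+1,\Omega_0}$, which is of higher order.

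The crux is the third piece, for which I would prove an $L^2$ analogue of Lemma~\ref{lem:hessian}. As in that lemma, the constant-coefficient structure of $B_h$, the translation invariance \eqref{equ:traninvariant}, and the Galerkin orthogonality \eqref{equ:galorth} yield
\[
B_h\bigl(H_h^{ab}(u-u_h),v_h\bigr)=B_h\bigl(u-u_h,(H_h^{ab})^{*}v_h\bigr)=0\qquad\forall v_h\in S_h^{0}(\Omega_1),
\]
for $a,b\in\{x,y\}$. Regarding $H_h^{ab}(u-u_h)=H_h^{ab}u-H_h^{ab}u_h$ as the error of the difference-quotient object $H_h^{ab}u$ against its finite element partner $H_h^{ab}u_h\in S_h$, I apply \eqref{ineq:main1locinestl2} in place of Theorem~\ref{thm:local}. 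This returns an approximation term, to be controlled by approximation theory without any $\Lambda(h)$, plus the weak term $\|H_h^{ab}(u-u_h)\|_{-s,q,\Omega_1}$. The latter I would handle by duality as in \eqref{eq:Hxxdual}--\eqref{ineq:localHxx}: writing it as $\sup_{\phi}(u-u_h,(H_h^{ab})^{*}\phi)$, moving the discrete adjoint onto the smooth test function $\phi$ (for which $(H_h^{ab})^{*}\phi$ is uniformly bounded once $s\ge1$), and estimating by Cauchy--Schwarz to reduce it to $\|u-u_h\|_{0,\Omega_1+Mh}$; a final application of \eqref{ineq:main1locinestl2} to $u-u_h$ itself then bounds this by $h^{k}\|u\|_{k+2,\Omega}+\|u-u_h\|_{-s,q,\Omega}$, uniformly in $k\ge2$. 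Summing the three pieces and absorbing the $h^{k+1}$ contributions into $h^{k}$ gives \eqref{equ:supL2}.

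The step I expect to be the main obstacle is the approximation term produced by \eqref{ineq:main1locinestl2} in the third piece, which scales like $h^{\min\{l,l+k-3\}}\|H_h^{ab}u\|_{l,\Omega_1}$ and requires choosing an admissible Sobolev index $l$, $3\le l\le k+1$, for which $\|H_h^{ab}u\|_{l}$ is controlled by $\|u\|_{k+2,\Omega}$. For $k\ge3$ the choice $l=k$ is comfortable and yields exactly $h^{k}$, whereas the borderline case $k=2$ forces $l=3$ and is the delicate point where the bookkeeping must be sharpest; here one exploits that $H_h^{ab}u$ is a high-order reconstruction of the smooth $\partial_{ab}u$ rather than estimating the stencil crudely. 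The decisive structural difference from Theorem~\ref{equ:super} is that \eqref{ineq:main1locinestl2} contains no $\Lambda(h)$: the third piece then stays genuinely of order $h^{k+1}$ even at $k=2$, so that, together with the $O(h^{k})$ consistency term, a uniform $O(h^{k})$ rate follows, in contrast to the $O(h^{3/2})$ limitation at $k=2$ in the maximum norm recorded in Remark~\ref{rmk:order}.
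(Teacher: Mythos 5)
Your proposal is correct and takes essentially the same route as the paper: the paper's (very brief) proof is exactly the decomposition \eqref{equ:decomp} with the $L^2$ polynomial-preservation bound for $Hu-H_hu_I$, combined with an ``analogous argument of Theorem~\ref{equ:super}'' in which the interior $L^2$ estimate \eqref{ineq:main1locinestl2} replaces Theorem~\ref{thm:local} inside the Lemma~\ref{lem:hessian} machinery, including the same duality treatment of the negative-norm term. The details you supply---in particular that the absence of the factor $\Lambda(h)$ in \eqref{ineq:main1locinestl2} is what extends the result down to $k=2$---are precisely what the paper leaves implicit.
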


	Although we can only prove the superconvergence of the recovered Hessian matrix on translation-invariant meshes, the numerical experiments in the next section show that this conclusion also holds for general unstructured meshes, including the Delaunay triangle meshes. Analogous to the recovery-type \textit{a posteriori} error estimator using the recovered gradient for second-order elliptic equations, we can construct a recovery-type \textit{a posteriori} error estimator using the recovered Hessian matrix for the fourth-order elliptic equation as:
\begin{equation}\label{equ:errind}
\eta_{h, T} = \|H_hu_h - D^2 u_h\|_{0, T}.
\end{equation}
Similarly, we can define the effectiveness index of the recovery-type \textit{a posteriori} error estimator as:
\begin{equation}
\kappa_h = \frac{\left(\sum_{T\in\mathcal{T}_h}\eta_{h,T}^2\right)^{1/2}}{\left(\sum_{T\in\mathcal T_h}\|D^2u - D^2 u_h\|_{0, T}^2\right)^{\frac12}}.
\end{equation}
The error estimator is considered asymptotically exact if $\lim\limits_{h\rightarrow 0}\kappa_h=1$.

\section{Numerical Experiments}
\label{sec:numer}
In this section, we present two numerical examples to support the theoretical results in Section \ref{sec:error}. The first investigation aims to demonstrate the superconvergence performance of the Hessian recovery method for C0IP discretization. The second one is designed to illustrate the asymptotic exactness of the recovery based {\it a posteriori} error estimator.

To demonstrate the superconvergence property, we split the set $\mathcal N_h$ into $\mathcal N_{1,h}$ and $\mathcal N_{2,h}$, where
\[\mathcal N_{1,h}:=\{z\in \mathcal N_h: \mathrm{dist}(z,\partial \Omega)\leq L\}\]
represents the set of nodes close to the boundary. Additionally, we introduce the boundary domain $\Omega_{1,h}$ as the domain associated with the union of elements whose vertices all lie in $\mathcal N_{1,h}$. The interior domain, denoted by $\Omega_{2,h}=\Omega\setminus \Omega_{1,h}$, is the complement of $\Omega_{1,h}$. For ease of notation, we define:
\begin{flalign*}
& (He)_0:=\Bigg(\sum_{T\in \mathcal T_h}\|D^2u-D^2u_h\|_{0,T}^2\Bigg)^{\frac12},\\
& (H^re)_0:=\Bigg(\sum_{T\subset\Omega_{2,h}}\|D^2u-H_hu_h\|_{0,T}^2\Bigg)^{\frac12},\\
& (H^re)_\infty:=\max_{T\subset\Omega_{2,h}}\|D^2u-H_hu_h\|_{0,\infty,T}.
\end{flalign*}

\subsection{Superconvergent results}
We consider the biharmonic equation $\Delta^2u=f$ on the domain $\Omega=(0,1)^2$. The true solution is given by
\[u=\sin^2(\pi x)\sin^2(\pi y),\]
where the source term and boundary condition are determined by the exact solution. The distance $L$ is taken as 0.1. We compute finite element errors on regular, Chevron, Criss-cross, Union-Jack patterns, as well as the Delaunay triangulation with regular refinement. In Tables \ref{table1}-\ref{table5}, we present the numerical results for the quadratic element.

From Tables \ref{table1}-\ref{table4}, we observe that $D^2u_h$ converges to $D^2u$ in the $L^2$ norm with a convergence rate of $\mathcal O(h)$, a well-known result in the literature. Meanwhile, the recovered Hessian $H_hu_h$ superconverges at the rate of $\mathcal O(h^2)$, as predicted by our Theorem \ref{equ:superL2}. Regarding the $L^\infty$ error, $H_hu_h$ converges at the rate of $\mathcal O(h^2)$ towards $D^2u$.

In Table 5, the errors for  the Delaunay triangulation meshes, an example of non-translation invariant meshes, are presented. Although our superconvergence theory is established on translation-invariant meshes, we observe the superconvergence phenomenon in these unstructured meshes.

Next, we report the convergence rates for the cubic and quartic elements, testing the convergence behavior on four types of translation-invariant meshes in the interior $L^\infty$ norm. From Fig. \ref{fig:sc_result}, we observe that for $k=3$, $H_hu_h$ superconverges to $D^2u$ at a rate of $\mathcal O(h^3)$ in $L^\infty$ norm, and for $k=4$, it superconverges to $D^2u$ at a rate of $\mathcal O(h^4)$. These results are consistent with our Theorem \ref{equ:super}.

\begin{table}[htb!]
\centering
\footnotesize
\caption{Numerical results for
the quadratic element on regular pattern uniform mesh}
\begin{tabular}{|r|c|c|c|c|c|c|}
\hline
 $1/h$ & $(He)_0$ & order& $(H^re)_0$ & order& $(H^re)_\infty$ & order\\ \hline\hline
 16 &1.94e+00&-&5.16e-01&-&8.38e-01&-\\ \hline
 32 &9.56e-01&1.02&1.22e-01&2.08&2.01e-01&2.06\\ \hline
 64 &4.75e-01&1.01&3.10e-02&1.98&4.96e-02&2.02\\ \hline
 128 &2.37e-01&1.00&7.83e-03&1.98&1.24e-02&2.00\\ \hline
 256 &1.19e-01&1.00&1.96e-03&2.00&3.10e-03&2.00\\ \hline
\end{tabular}
\label{table1}
\end{table}

\begin{table}[htb!]
\centering
\footnotesize
\caption{Numerical results for the quadratic element on Chevron pattern uniform mesh}
\begin{tabular}{|r|c|c|c|c|c|c|}
\hline
 $1/h$ & $(He)_0$ & order& $(H^re)_0$ & order& $(H^re)_\infty$ & order\\ \hline\hline
 16 &1.88e+00&-&3.67e-01&-&7.11e-01&-\\ \hline
 32 &9.30e-01&1.01&8.66e-02&2.09&1.77e-01&2.01\\ \hline
 64 &4.63e-01&1.00&2.19e-02&1.98&4.42e-02&2.00\\ \hline
 128 &2.32e-01&1.00&5.55e-03&1.98&1.11e-02&2.00\\ \hline
 256 &1.16e-01&1.00&1.39e-03&2.00&2.77e-03&2.00\\ \hline
\end{tabular}
\label{table2}
\end{table}

\begin{table}[htb!]
\centering
\footnotesize
\caption{Numerical results for the quadratic element on Criss-cross pattern uniform mesh}
\begin{tabular}{|r|c|c|c|c|c|c|}
\hline
 $1/h$ & $(He)_0$ & order& $(H^re)_0$ & order& $(H^re)_\infty$ & order\\ \hline\hline
 16 &1.26e+00&-&1.01e-01&-&1.94e-01&-\\ \hline
 32 &6.32e-01&1.00&2.54e-02&1.99&4.86e-02&2.00\\ \hline
 64 &3.16e-01&1.00&6.49e-03&1.97&1.22e-02&2.00\\ \hline
 128 &1.58e-01&1.00&1.65e-03&1.97&3.09e-03&1.98\\ \hline
 256 &7.90e-02&1.00&4.33e-04&1.93&8.36e-04&1.88\\ \hline
\end{tabular}
\label{table3}
\end{table}

\begin{table}[htb!]
\centering
\footnotesize
\caption{Numerical results for the quadratic element on Union-Jack pattern uniform mesh}
\begin{tabular}{|r|c|c|c|c|c|c|}
\hline
 $1/h$ & $(He)_0$ & order& $(H^re)_0$ & order& $(H^re)_\infty$ & order\\ \hline\hline
 16 &1.89e+00&-&2.50e-01&-&4.94e-01&-\\ \hline
 32 &9.44e-01&1.00&6.06e-02&2.04&1.20e-01&2.05\\ \hline
 64 &4.73e-01&1.00&1.55e-02&1.97&2.97e-02&2.01\\ \hline
 128 &2.36e-01&1.00&3.93e-03&1.98&7.50e-03&1.99\\ \hline
 256 &1.18e-01&1.00&1.00e-03&1.97&1.96e-03&1.94\\ \hline
\end{tabular}
\label{table4}
\end{table}

\begin{table}[htb!]
\centering
\footnotesize
\caption{Numerical results for the quadratic element Delaunay mesh with regular refinement}
\begin{tabular}{|r|c|c|c|c|c|c|}
\hline
 $1/h$ & $(He)_0$ & order& $(H^re)_0$ & order& $(H^re)_\infty$ & order\\ \hline\hline
 16 &2.35e+00&-&4.87e-01&-&1.10e+00&-\\ \hline
 32 &1.17e+00&1.01&1.17e-01&2.06&2.18e-01&2.34\\ \hline
 64 &5.82e-01&1.00&3.48e-02&1.74&9.36e-02&1.22\\ \hline
 128 &2.91e-01&1.00&1.03e-02&1.75&4.29e-02&1.13\\ \hline
 256 &1.45e-01&1.00&3.10e-03&1.74&2.00e-02&1.10\\ \hline
\end{tabular}
\label{table5}
\end{table}

\begin{figure}
   \centering
   \subcaptionbox{\label{fig:sc-rg}}
  {\includegraphics[width=0.49\textwidth]{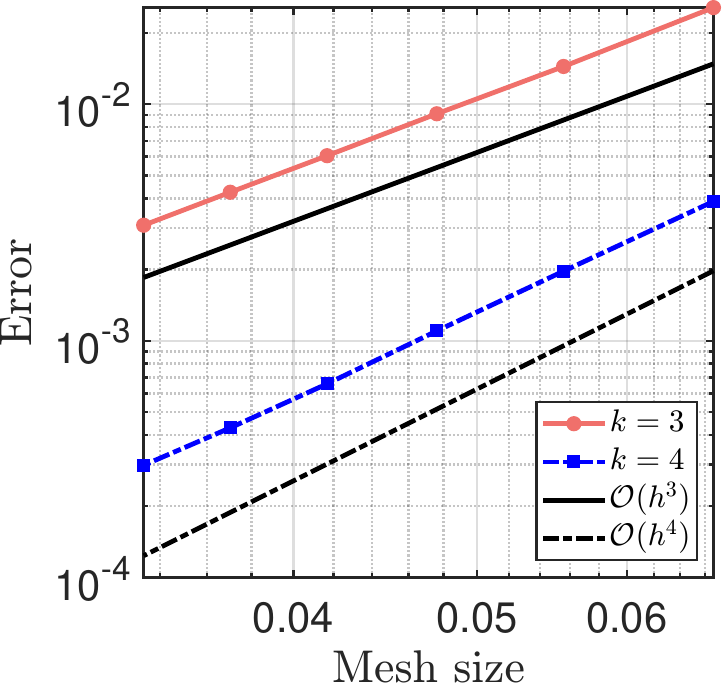}}
  \subcaptionbox{\label{fig:sc-ch}}
   {\includegraphics[width=0.485\textwidth]{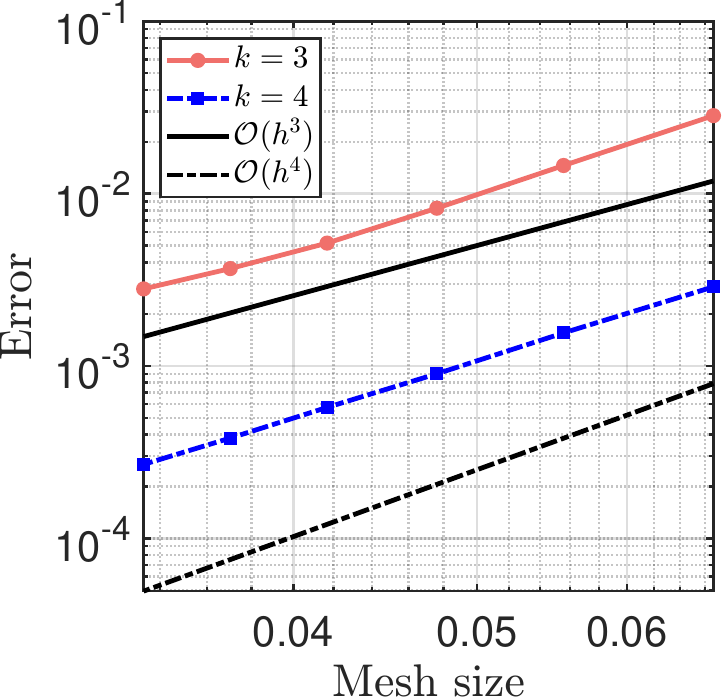}}
  \subcaptionbox{\label{fig:sc-cc}}
  {\includegraphics[width=0.49\textwidth]{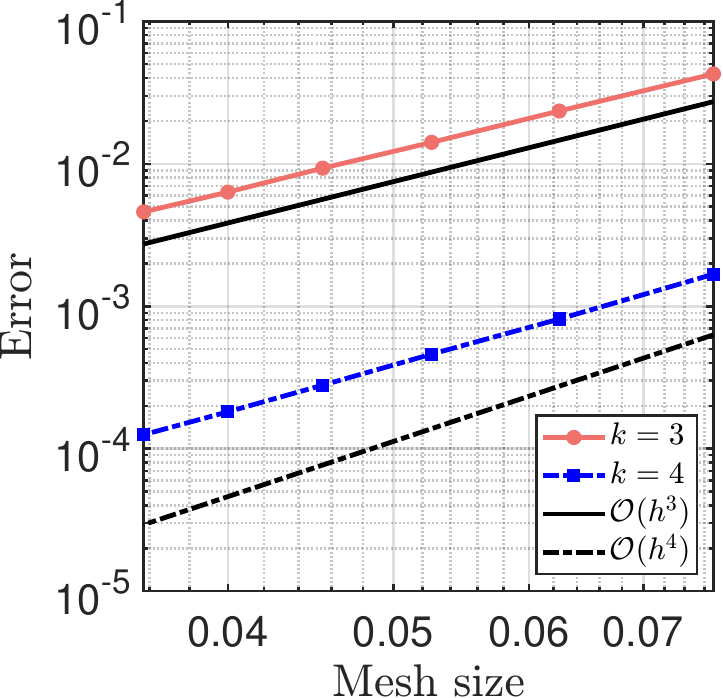}}
  \subcaptionbox{\label{fig:sc-uj}}
   {\includegraphics[width=0.485\textwidth]{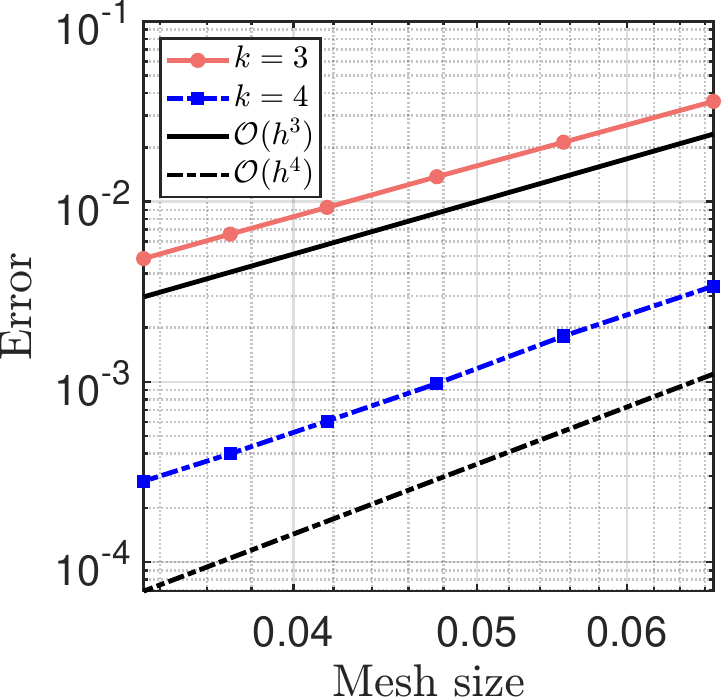}}

   \caption{The loglog plots of errors w.r.t the mesh sizes: (a)regular pattern; (b)Chevron pattern; (c) Criss-cross pattern; (d)Union-Jack pattern.
   }
   \label{fig:sc_result}
\end{figure}

\subsection{Adaptive C0IP methods}
In this test, we consider the biharmonic equation
\[ \Delta^2 u = 0\]
 on the L-shaped domain $\Omega = (-1,1)\times (-1,1)\setminus (0,1)\times (-1,0)$. The  exact solution in polar coordinate is  $u = r^{\frac{5}{3}}\sin(\frac53\theta)$. We see that the true solution $u$ has a singularity point at the origin. To resolve the singularity, our idea is to use the adaptive finite element method to solve this problem, employing the recovery-based {\it a posteriori} 
 error estimator defined in \eqref{equ:errind}.  The initial mesh and the mesh adaptively refined by
the Do\"{r}fler marking strategy with parameter 0.5 are shown in Fig.\ref{fig:adaptivemesh}.  As  depicted in Fig. \ref{fig:lshape_result}(a), an optimal convergence rate for the $H^2$ error can be observed, and we find that the recovered Hessian superconverges to the exact Hessian at a rate
 $\mathcal O(h^{1.3})$. In Fig. \ref{fig:lshape_result}(b), we plot the effectivity index $\kappa_h$ and observe that
$\kappa_h$ converges asymptotically to 1, demonstrating that the {\it a posteriori} error estimator \eqref{equ:errind} is asymptotically exact.
\begin{figure}
   \centering
   \subcaptionbox{\label{fig:initialmesh}}
  {\includegraphics[width=0.47\textwidth]{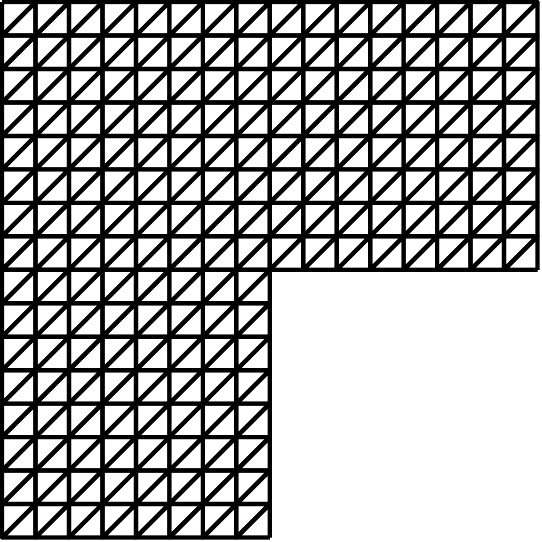}}
  \subcaptionbox{\label{fig:Gaussian_adaptive}}
   {\includegraphics[width=0.47\textwidth]{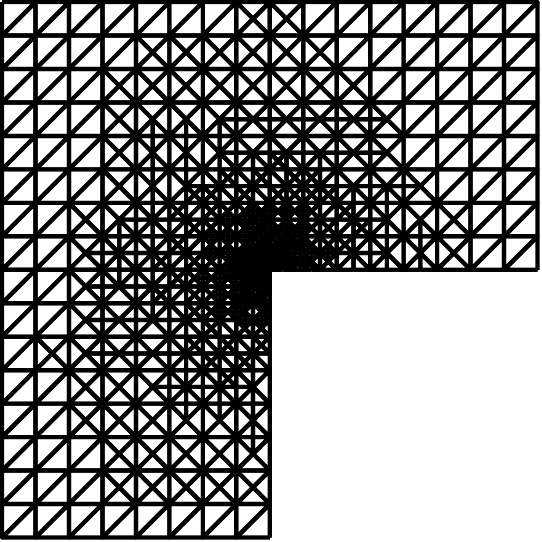}}
   \caption{Meshes for Lshape domain problem: (a) Initial mesh; (b) Adaptively refined mesh.}
   \label{fig:adaptivemesh}
\end{figure}

\begin{figure}
   \centering
   \subcaptionbox{\label{fig:gaussian_error}}
  {\includegraphics[width=0.485\textwidth]{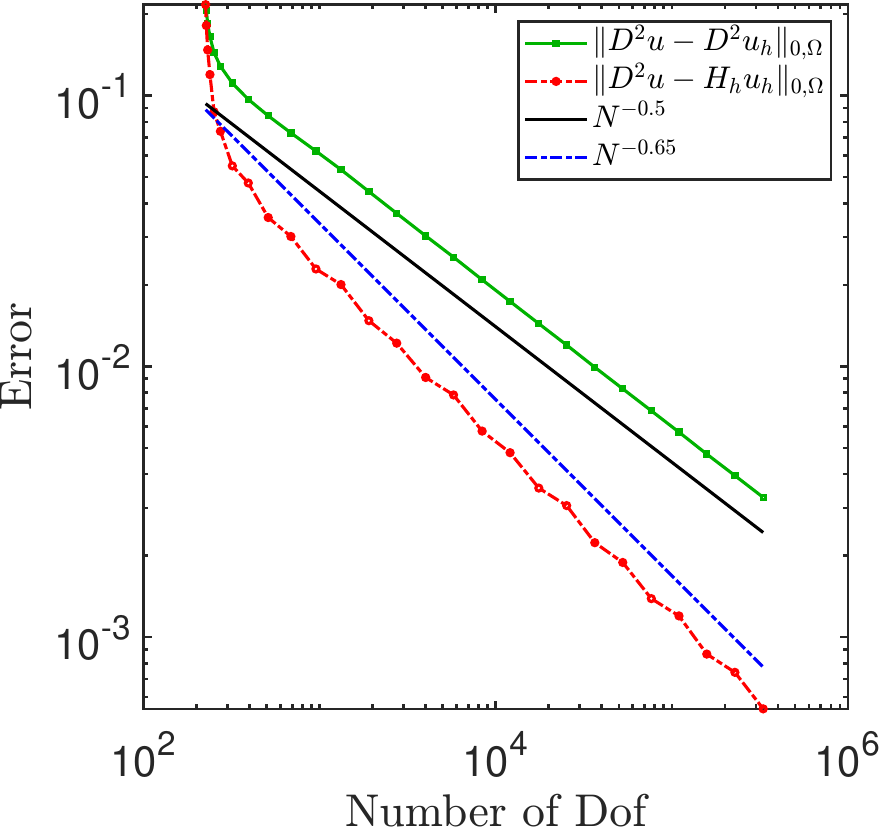}}
  \subcaptionbox{\label{fig:gaussian_ind}}
   {\includegraphics[width=0.485\textwidth]{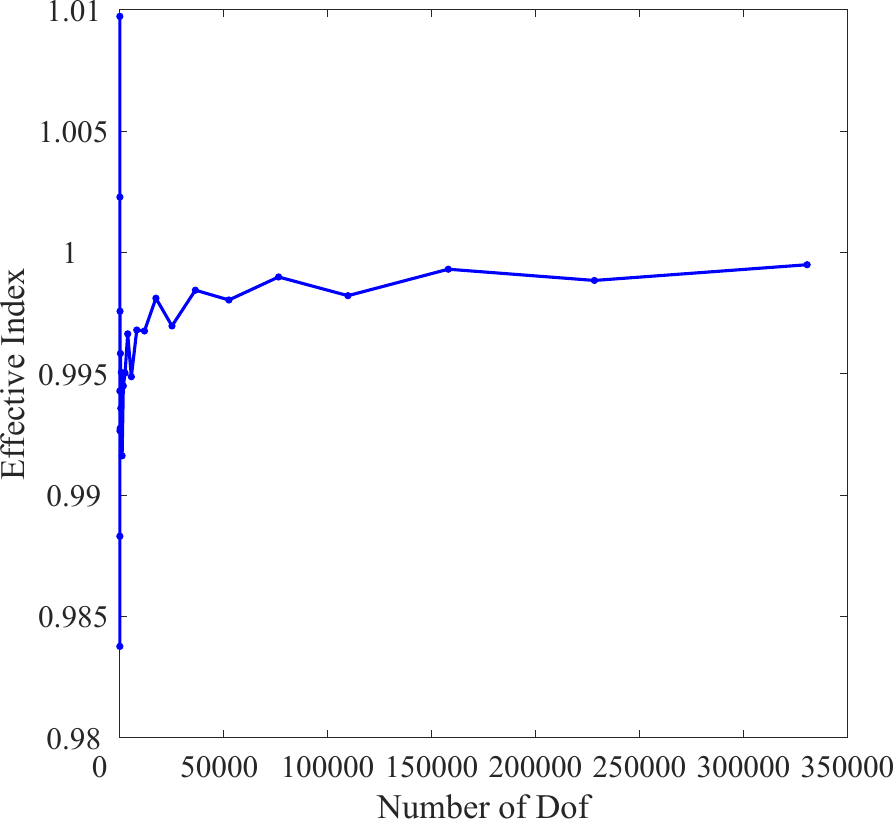}}
   \caption{Numerical result for test case 3: (a) Numerical errors; (b) Effective index.}
   \label{fig:lshape_result}
\end{figure}

\section{Conclusion}
\label{sec:con}
In this work, we conducted a superconvergent analysis of the Hessian recovery method for the C0IP  discretization of the biharmonic problem. The primary tools used for analyzing the superconvergence property are the established interior estimates theory and the application of the difference quotient on translation-invariant meshes. As a byproduct,  we developed an asymptotically exact {\it a posteriori} error  estimator.


\section*{Acknowledgment}
This work was supported in part by the Andrew Sisson Fund, Dyason Fellowship, the Faculty Science Researcher Development Grant of the University of Melbourne, and the NSFC grant 12131005.

\bibliographystyle{siamplain}
\bibliography{mybibfile}

\begin{thebibliography}{10}

\bibitem{AR1975}
{\sc R.~A. Adams}, {\em Sobolev spaces}, vol.~Vol. 65 of Pure and Applied
  Mathematics, Academic Press [Harcourt Brace Jovanovich, Publishers], New
  York-London, 1975.

\bibitem{AgVa2002}
{\sc A.~Agouzal and Y.~Vassilevski}, {\em On a discrete {H}essian recovery for
  {$P_1$} finite elements}, J. Numer. Math., 10 (2002), pp.~1--12.

\bibitem{AO2000}
{\sc M.~Ainsworth and J.~T. Oden}, {\em A posteriori error estimation in finite
  element analysis}, Pure and Applied Mathematics (New York),
  Wiley-Interscience [John Wiley \& Sons], New York, 2000.

\bibitem{BS2001}
{\sc I.~Babu{\v{s}}ka and T.~Strouboulis}, {\em The finite element method and
  its reliability}, Numerical Mathematics and Scientific Computation, The
  Clarendon Press, Oxford University Press, New York, 2001.

\bibitem{BaZl1973}
{\sc I.~Babu\v{s}ka and M.~Zl\'{a}mal}, {\em Nonconforming elements in the
  finite element method with penalty}, SIAM J. Numer. Anal., 10 (1973),
  pp.~863--875.

\bibitem{Baker1977}
{\sc G.~A. Baker}, {\em Finite element methods for elliptic equations using
  nonconforming elements}, Math. Comp., 31 (1977), pp.~45--59.

\bibitem{BGSu2010}
{\sc S.~C. Brenner, T.~Gudi, and L.-y. Sung}, {\em An a posteriori error
  estimator for a quadratic {$C^0$}-interior penalty method for the biharmonic
  problem}, IMA J. Numer. Anal., 30 (2010), pp.~777--798.

\bibitem{BS2008}
{\sc S.~C. Brenner and L.~R. Scott}, {\em The mathematical theory of finite
  element methods}, vol.~15 of Texts in Applied Mathematics, Springer, New
  York, third~ed., 2008.

\bibitem{BS2005}
{\sc S.~C. Brenner and L.-Y. Sung}, {\em {$C^0$} interior penalty methods for
  fourth order elliptic boundary value problems on polygonal domains}, J. Sci.
  Comput., 22/23 (2005), pp.~83--118.

\bibitem{BWZ2004}
{\sc S.~C. Brenner, K.~Wang, and J.~Zhao}, {\em Poincar\'{e}-{F}riedrichs
  inequalities for piecewise {$H^2$} functions}, Numer. Funct. Anal. Optim., 25
  (2004), pp.~463--478.

\bibitem{CNRS2023}
{\sc C.~Carstensen, N.~Nataraj, G.~C. Remesan, and D.~Shylaja}, {\em Unified a
  priori analysis of four second-order {FEM} for fourth-order quadratic
  semilinear problems}, Numer. Math., 154 (2023), pp.~323--368.

\bibitem{CGZZ2017}
{\sc H.~Chen, H.~Guo, Z.~Zhang, and Q.~Zou}, {\em A {$C^0$} linear finite
  element method for two fourth-order eigenvalue problems}, IMA J. Numer.
  Anal., 37 (2017), pp.~2120--2138.

\bibitem{Ci2002}
{\sc P.~G. Ciarlet}, {\em The finite element method for elliptic problems},
  vol.~40 of Classics in Applied Mathematics, Society for Industrial and
  Applied Mathematics (SIAM), Philadelphia, PA, 2002.
\newblock Reprint of the 1978 original [North-Holland, Amsterdam; MR0520174 (58
  \#25001)].

\bibitem{CiRa1974}
{\sc P.~G. Ciarlet and P.-A. Raviart}, {\em A mixed finite element method for
  the biharmonic equation}, in Mathematical aspects of finite elements in
  partial differential equations ({P}roc. {S}ympos., {M}ath. {R}es. {C}enter,
  {U}niv. {W}isconsin, {M}adison, {W}is., 1974), Academic Press, New
  York-London, 1974, pp.~125--145.

\bibitem{EGHLMT2002}
{\sc G.~Engel, K.~Garikipati, T.~J.~R. Hughes, M.~G. Larson, L.~Mazzei, and
  R.~L. Taylor}, {\em Continuous/discontinuous finite element approximations of
  fourth-order elliptic problems in structural and continuum mechanics with
  applications to thin beams and plates, and strain gradient elasticity},
  Comput. Methods Appl. Mech. Engrg., 191 (2002), pp.~3669--3750.

\bibitem{GR1986}
{\sc V.~Girault and P.-A. Raviart}, {\em Finite element methods for
  {N}avier-{S}tokes equations}, vol.~5 of Springer Series in Computational
  Mathematics, Springer-Verlag, Berlin, 1986.
\newblock Theory and algorithms.

\bibitem{GZZ2017b}
{\sc H.~Guo, Z.~Zhang, and R.~Zhao}, {\em Hessian recovery for finite element
  methods}, Math. Comp., 86 (2017), pp.~1671--1692.

\bibitem{GZZ2018}
{\sc H.~Guo, Z.~Zhang, and Q.~Zou}, {\em A {$C^0$} linear finite element method
  for biharmonic problems}, J. Sci. Comput., 74 (2018), pp.~1397--1422.

\bibitem{HL2002}
{\sc P.~Hansbo and M.~G. Larson}, {\em A discontinuous {G}alerkin method for
  the plate equation}, Calcolo, 39 (2002), pp.~41--59.

\bibitem{John1973}
{\sc C.~Johnson}, {\em On the convergence of a mixed finite-element method for
  plate bending problems}, Numer. Math., 21 (1973), pp.~43--62.

\bibitem{KVV2019}
{\sc V.~V. Karachik}, {\em The {G}reen function of the {D}irichlet problem for
  the biharmonic equation in a ball}, Comput. Math. Math. Phys., 59 (2019),
  pp.~66--81.

\bibitem{KVV2021}
{\sc V.~V. Karachik}, {\em Green's functions of the {N}avier and
  {R}iquier-{N}eumann problems for the biharmonic equation in the ball},
  Differ. Equ., 57 (2021), pp.~654--668.
\newblock Translation of Differ. Uravn. {\bf 57} (2021), no. 5, 673--686.

\bibitem{LL1975}
{\sc P.~Lascaux and P.~Lesaint}, {\em Some nonconforming finite elements for
  the plate bending problem}, Rev. Fran\c{c}aise Automat. Informat. Recherche
  Op\'{e}rationnelle S\'{e}r. Rouge Anal. Num\'{e}r., 9 (1975), pp.~9--53.

\bibitem{Le2021}
{\sc D.~Leykekhman}, {\em Pointwise error estimates for {$C^0$} interior
  penalty approximation of biharmonic problems}, Math. Comp., 90 (2021),
  pp.~41--63.

\bibitem{MC2023}
{\sc C.~Mittelstedt}, {\em Theory of plates and shells}, Springer Vieweg,
  Berlin, 2023.

\bibitem{Mo1968}
{\sc L.~S.~D. Morley}, {\em The triangular equilibrium element in the solution
  of plate bending problems}, Aero. Quart., 19 (1968), pp.~149--169.

\bibitem{NZ2004}
{\sc A.~Naga and Z.~Zhang}, {\em A posteriori error estimates based on the
  polynomial preserving recovery}, SIAM J. Numer. Anal., 42 (2004),
  pp.~1780--1800 (electronic).

\bibitem{NZ2005}
{\sc A.~Naga and Z.~Zhang}, {\em The polynomial-preserving recovery for higher
  order finite element methods in 2{D} and 3{D}}, Discrete Contin. Dyn. Syst.
  Ser. B, 5 (2005), pp.~769--798.

\bibitem{NS1974}
{\sc J.~A. Nitsche and A.~H. Schatz}, {\em Interior estimates for
  {R}itz-{G}alerkin methods}, Math. Comp., 28 (1974), pp.~937--958.

\bibitem{PABG2011}
{\sc M.~Picasso, F.~Alauzet, H.~Borouchaki, and P.-L. George}, {\em A numerical
  study of some {H}essian recovery techniques on isotropic and anisotropic
  meshes}, SIAM J. Sci. Comput., 33 (2011), pp.~1058--1076.

\bibitem{SW1977}
{\sc A.~H. Schatz and L.~B. Wahlbin}, {\em Interior maximum norm estimates for
  finite element methods}, Math. Comp., 31 (1977), pp.~414--442.

\bibitem{SW1995}
{\sc A.~H. Schatz and L.~B. Wahlbin}, {\em Interior maximum-norm estimates for
  finite element methods. {II}}, Math. Comp., 64 (1995), pp.~907--928.

\bibitem{VR2013}
{\sc R.~Verf\"{u}rth}, {\em A posteriori error estimation techniques for finite
  element methods}, Numerical Mathematics and Scientific Computation, Oxford
  University Press, Oxford, 2013.

\bibitem{Wa1995}
{\sc L.~Wahlbin}, {\em Superconvergence in {G}alerkin finite element methods},
  vol.~1605 of Lecture Notes in Mathematics, Springer-Verlag, Berlin, 1995.

\bibitem{WangXu2006}
{\sc M.~Wang and J.~Xu}, {\em The {M}orley element for fourth order elliptic
  equations in any dimensions}, Numer. Math., 103 (2006), pp.~155--169.

\bibitem{ZN2005}
{\sc Z.~Zhang and A.~Naga}, {\em A new finite element gradient recovery method:
  superconvergence property}, SIAM J. Sci. Comput., 26 (2005), pp.~1192--1213
  (electronic).

\bibitem{ZZ1992a}
{\sc O.~C. Zienkiewicz and J.~Z. Zhu}, {\em The superconvergent patch recovery
  and a posteriori error estimates. {I}. {T}he recovery technique}, Internat.
  J. Numer. Methods Engrg., 33 (1992), pp.~1331--1364.

\end{thebibliography}

\appendix

\section{Some basic estimates and the proof of Lemma \ref{lem:5.3}}

On the smooth domain, we recall the following regularity estimate, cf. \cite{GR1986}.
\begin{lemma}\label{lem:rgl}
Let $G\subset \Omega$ be a sphere, and $\varphi\in H^m(G)$, $m\geq-2$, then the unique solution $u\in H_0^2(G)$ to the equation
\begin{equation}\label{eq:rgl}
 B(u,v)=(\varphi,v)\quad\forall v\in H_0^2(G)
\end{equation}
has the regularity $u\in H_0^2(G)\cap H^{m+4}(G)$. Moreover, it holds the estimate
\begin{equation}\label{ineq:rgl}
  \|u\|_{m+4,G}\lesssim \|\varphi\|_{m,G}.
\end{equation}
\end{lemma}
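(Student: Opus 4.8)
The plan is to treat this as the classical shift theorem for the clamped biharmonic (plate) problem, organized into a variational base case, an Agmon--Douglis--Nirenberg elliptic-regularity step for nonnegative orders, and an interpolation step to fill in the negative orders. First I would settle the base case $m=-2$. Since $B(u,u)=\|D^2u\|_{0,G}^2$ and any $u\in H_0^2(G)$ has vanishing trace and vanishing normal derivative, two applications of the Poincar\'e--Friedrichs inequality give $\|u\|_{0,G}\lesssim\|\nabla u\|_{0,G}\lesssim\|D^2u\|_{0,G}$, whence $B(u,u)\gtrsim\|u\|_{2,G}^2$; that is, $B$ is coercive on $H_0^2(G)$, and it is trivially continuous. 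The Lax--Milgram lemma then produces the unique solution $u\in H_0^2(G)$ of \eqref{eq:rgl} together with the bound $\|u\|_{2,G}\lesssim\|\varphi\|_{-2,G}$, which is exactly \eqref{ineq:rgl} for $m=-2$.

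Next, for integer $m\ge0$, I would identify the weak equation \eqref{eq:rgl} with the strong clamped problem $\Delta^2u=\varphi$ in $G$, $u=\partial u/\partial n=0$ on $\partial G$ (obtained by integration by parts, using that for $H_0^2$ functions $\int D^2u:D^2v=\int\Delta u\,\Delta v$). Because $G$ is a ball, $\partial G$ is $C^\infty$, and the Dirichlet boundary operators $\{u,\partial_n u\}$ satisfy the complementing (Lopatinskii--Shapiro) condition for the elliptic operator $\Delta^2$. The Agmon--Douglis--Nirenberg theory, as recorded in \cite{GR1986}, then yields $u\in H^{m+4}(G)$ together with the a priori estimate $\|u\|_{m+4,G}\lesssim\|\varphi\|_{m,G}+\|u\|_{0,G}$. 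The lower-order term is controlled through the base case by $\|u\|_{0,G}\le\|u\|_{2,G}\lesssim\|\varphi\|_{-2,G}\lesssim\|\varphi\|_{m,G}$, so it can be absorbed, giving \eqref{ineq:rgl} for every $m\ge0$.

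Finally, to cover the remaining integer value $m=-1$ (and any non-integer orders if desired), I would interpolate. By the two preceding steps the solution operator $\varphi\mapsto u$ is bounded from $H^{-2}(G)$ into $H^2(G)$ and from $H^0(G)$ into $H^4(G)$; real interpolation of these Sobolev scales at the midpoint, using $[H^{-2}(G),H^0(G)]_{1/2}=H^{-1}(G)$ and $[H^2(G),H^4(G)]_{1/2}=H^3(G)$, gives boundedness from $H^{-1}(G)$ into $H^3(G)$, which completes the full range $m\ge-2$. The main obstacle is not any explicit computation but the structural fact underlying the ADN step, namely the verification that the clamped conditions are genuinely covering for $\Delta^2$ so that the full-scale shift estimate is available; this is classical for the plate operator on a smooth domain, and once it is in hand the remaining coercivity, absorption, and interpolation arguments are routine.
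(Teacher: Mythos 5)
Your proposal is correct, but note that the paper does not prove this lemma at all: it is stated in the appendix as a recalled classical result, with a citation to \cite{GR1986}, and no argument is given. What you have written is essentially a reconstruction of the standard proof that underlies such a citation, and it is sound: the coercivity of $B$ on $H_0^2(G)$ via two applications of Poincar\'e--Friedrichs and Lax--Milgram handles $m=-2$; the identification of the weak problem with the strong clamped problem $\Delta^2 u=\varphi$, $u=\partial u/\partial n=0$ on the smooth boundary of the ball, together with Agmon--Douglis--Nirenberg theory (the clamped boundary operators do satisfy the complementing condition for $\Delta^2$), gives the shift estimate for $m\geq 0$ after absorbing the lower-order term $\|u\|_{0,G}$ using uniqueness; and real interpolation between the $H^{-2}\to H^2$ and $L^2\to H^4$ bounds of the solution operator fills in $m=-1$, using $[H^{-2}(G),L^2(G)]_{1/2}=H^{-1}(G)$ (which follows from duality of the interpolation scale $[L^2,H_0^2]_{1/2}=H_0^1$) and $[H^2(G),H^4(G)]_{1/2}=H^3(G)$. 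The only points worth being careful about, were this written out in full, are (i) interpreting $(\varphi,v)$ as a duality pairing for negative $m$, and (ii) the compatibility of duality and interpolation for the negative-order Sobolev scale on a bounded smooth domain; both are standard. In short, your proof supplies the content the paper delegates to the literature, which is a reasonable thing to do but more than the paper itself attempts.
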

Simultaneously, for the ``Neumann'' boundary condition, we assume the following regularity lemma holds.
\begin{lemma}\label{lem:rglBtilde}
Let $G\subset \Omega$ be a sphere, and $\varphi\in W^{p,q}(G)$, $p,q\geq0$, then $u\in H^2(G)$, the solution of the equation:
\begin{equation}\label{eq:rglBtilde}
\tilde B(u,v)=(\varphi,v)\quad\forall v\in H^2(G)
\end{equation}
has the regularity $u\in W^{p+4,q}(G)$ and it holds the estimate
\begin{equation}\label{ineq:rglBtilde}
  \|u\|_{p+4,q,G}\lesssim \|\varphi\|_{p,q,G}.
\end{equation}
Moreover, if $\varphi\in (H^1(G))^*$, then $u\in H^3(G)$ and $\|u\|_{3,G}\lesssim\|\varphi\|_{(H^1(\Omega))^*}$, where $(H^1(G))^*$ is the dual space of
$H^1(G)$.
\end{lemma}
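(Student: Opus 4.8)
The plan is to read \eqref{eq:rglBtilde} as the weak form of a fourth-order elliptic boundary value problem on the smooth domain $G$ and to obtain the regularity from the Agmon--Douglis--Nirenberg (ADN) theory, deferring the dual estimate to an interpolation argument. First I would record well-posedness in $H^2(G)$. Since $\tilde B(v,v)=|v|_{2,G}^2+\|v\|_{0,G}^2$, an Ehrling-type interpolation inequality bounding $|v|_{1,G}$ by $\varepsilon|v|_{2,G}+C_\varepsilon\|v\|_{0,G}$ shows that $\tilde B$ is coercive on $H^2(G)$; the Lax--Milgram lemma then produces a unique $u\in H^2(G)$ with $\|u\|_{2,G}\lesssim\|\varphi\|_{(H^2(G))^*}$. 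The resulting solution operator $T\colon\varphi\mapsto u$ is injective because $\tilde B(v,v)=0$ forces $v\equiv0$; this injectivity will be used twice below.

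Next I would identify the strong form. Integrating $\int_G D^2u:D^2v\,dx$ by parts twice turns \eqref{eq:rglBtilde} into the boundary value problem $\Delta^2 u+u=\varphi$ in $G$, supplemented by the two \emph{natural} (free-plate) boundary conditions read off from the vanishing of the boundary integrals, namely a bending-moment condition and an effective-shear (Kirchhoff) condition on $\partial G$. The principal symbol of $\Delta^2$ being $|\xi|^4$, the operator is uniformly elliptic of order four, and the boundary data are homogeneous.

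The heart of the matter is to verify that these natural boundary operators satisfy the Lopatinski--Shapiro complementing condition relative to $\Delta^2$; this is the classical free-plate computation (see, e.g., \cite{GR1986}) and is the step I expect to be the main obstacle, since it requires the explicit form of the natural operators and an algebraic check along the boundary symbol. Granting it, the ADN a priori estimates on the smooth domain $G$ give, for $1<q<\infty$ and integer $p\ge0$, the bound $\|u\|_{p+4,q,G}\lesssim\|\varphi\|_{p,q,G}+\|u\|_{0,q,G}$, with no contribution from the homogeneous boundary data. To remove the lower-order term I would run the standard compactness/contradiction argument: were the clean estimate to fail, a sequence $u_n$ with $\|u_n\|_{p+4,q,G}=1$ and $\|\varphi_n\|_{p,q,G}\to0$ would, by the compact embedding $W^{p+4,q}(G)\hookrightarrow\hookrightarrow L^q(G)$ and injectivity of $T$, converge to the zero solution, contradicting $\|u_n\|_{0,q,G}\gtrsim1$. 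This establishes \eqref{ineq:rglBtilde}; the $W^{3,\infty}$ bound actually invoked in \eqref{ineq:local4} then follows by choosing $q>2$ and using the Sobolev embedding $W^{4,q}(G)\hookrightarrow W^{3,\infty}(G)$.

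For the final assertion I would avoid repeating ADN and argue by interpolation. The operator $T$ is bounded from $(H^2(G))^*$ to $H^2(G)$ by Lax--Milgram and, by the case $p=0$, $q=2$ just proved, from $L^2(G)$ to $H^4(G)$. Complex interpolation with $\theta=\tfrac12$, together with $[H^2,H^4]_{1/2}=H^3$ and the duality identity $[(H^2)^*,L^2]_{1/2}=\big([H^2,L^2]_{1/2}\big)^*=(H^1)^*$, then yields boundedness of $T\colon(H^1(G))^*\to H^3(G)$, that is $\|u\|_{3,G}\lesssim\|\varphi\|_{(H^1(G))^*}$. Apart from the complementing-condition check, every ingredient here is standard elliptic regularity and interpolation machinery.
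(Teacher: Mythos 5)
Your proposal cannot be compared against a proof in the paper for a simple reason: the paper does not prove this lemma at all. In the appendix it is introduced with the words ``for the `Neumann' boundary condition, we \emph{assume} the following regularity lemma holds,'' so Lemma \ref{lem:rglBtilde} is a standing hypothesis, not a proved result. Your outline therefore supplies content the paper leaves blank, and it is the right outline: coercivity of $\tilde B$ on $H^2(G)$ via an Ehrling inequality plus Lax--Milgram; identification of the strong form $\Delta^2u+u=\varphi$ with the homogeneous free-plate (bending moment and Kirchhoff effective-shear) natural conditions on $\partial G$; verification of the Lopatinski--Shapiro complementing condition, which is indeed the classical free-plate computation (carried out, e.g., in Gazzola--Grunau--Sweers, \emph{Polyharmonic Boundary Value Problems}, and unaffected by the zeroth-order term $K(\cdot,\cdot)$, which does not touch the principal symbol); the ADN a priori estimate with the lower-order term $\|u\|_{0,q,G}$ removed by the standard compactness--uniqueness contradiction, where injectivity follows from $\tilde B(v,v)=|v|_{2,G}^2+\|v\|_{0,G}^2$; and Calder\'on duality plus complex interpolation, $[H^2,H^4]_{1/2}=H^3$ and $[(H^2)^*,L^2]_{1/2}=(H^1)^*$ (legitimate here since all endpoint spaces are Hilbert), for the final assertion. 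Two remarks are worth adding. First, your restriction to $1<q<\infty$ is not a technicality but a genuine correction to the lemma as stated: ADN regularity fails at the endpoints $q=1$ and $q=\infty$, so the blanket claim for all $q\geq 0$ is too strong; your observation that the paper's only endpoint use, the $W^3_\infty$ bound in \eqref{ineq:local4}, is recovered from $\|u\|_{4,q,G}\lesssim\|\varphi\|_{0,q,G}$ with $q>2$ and the embedding $W^{4,q}(G)\hookrightarrow W^{3}_\infty(G)$ in two dimensions repairs exactly the gap that matters downstream. Second, since $G$ is a disk the boundary is smooth and no Kirchhoff corner terms arise, so the strong-form derivation is clean; had the paper needed the lemma on a polygon, that step would be where the argument breaks. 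In short: the proposal is correct modulo the deferred (and classical) complementing-condition check, and it strengthens the paper rather than deviating from it.
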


We also collect some noteworthy superapproximation lemmas.
\begin{lemma}\label{lem:app}
Let $G_1\subset\subset G$ with $\dist(G_1,\partial G)\geq\kappa h$, and let $\omega\in C_0^\infty(G_1)$. Then for any $v\in S_h(G_2)$, when mesh size $h$ is small enough, there exists a $\xi\in S_h^0(G_1)$ such that
\begin{equation}\label{ineq:app1}
  \enorm{\omega v-\xi}_{2,h,G}^2\lesssim h^2\sum_{T\subset G}\|v\|_{2,T}^2.
\end{equation}
Moreover, let $G_4\subset\subset G_3\subset\subset G_2\subset\subset G_1$ with $\dist(G_4,\partial G_3)\geq\kappa h$, $\dist(G_3,\partial G_2)\geq\kappa h$. Then if $\omega\equiv 1$ on
$G_2$, we have $\xi\equiv v$ on $G_3$ and
\begin{equation}\label{ineq:app2}
  \enorm{\omega v-\xi}_{2,h,G}^2\lesssim h^2\sum_{T\subset G\setminus G_4}\|v\|_{2,T}^2.   
\end{equation}
\end{lemma}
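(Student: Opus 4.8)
The plan is to take the canonical choice $\xi=I_h(\omega v)$ and to bound the three contributions to $\enorm{\omega v-\xi}_{2,h,G}^2$ (the volume second-derivative term, the weighted jump of $\partial/\partial n$, and the weighted average of $\partial^2/\partial n^2$) element by element, exploiting the smoothness of $\omega$ to gain one power of $h$ and the inverse inequality to trade the high-order derivatives of the piecewise polynomial $v$ for its $H^2$-seminorm. First I would settle the support and reproduction claims. Since $\omega\in C_0^\infty(G_1)$, the nodal values $\omega(z)v(z)$ vanish at every node $z$ outside $\operatorname{supp}\omega$, so $\xi$ is supported on the union of elements touching $\operatorname{supp}\omega$; for $h$ small enough this union lies in $G_1$, giving $\xi\in S_h^0(G_1)$. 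When $\omega\equiv1$ on $G_2$, every element meeting $G_3$ has all its nodes in $\{\omega=1\}$ once $h$ is small, so there $\omega v=v$ and $\xi=I_hv=v$; hence $\xi\equiv v$ on $G_3$ and $w:=\omega v-\xi$ vanishes on a neighborhood of $G_4$, which is exactly what lets me drop the elements $T\subset G_4$ from the final sum and obtain \eqref{ineq:app2} from the same per-element bounds that yield \eqref{ineq:app1}.

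For the per-element estimate I would observe that $\omega v$ is smooth on each $T$, so the interpolation estimate gives $\|w\|_{j,T}\lesssim h^{k+1-j}|\omega v|_{k+1,T}$ for $j=1,2,3$. The Leibniz rule together with the crucial cancellation $D^{k+1}(v|_T)=0$ yields $|\omega v|_{k+1,T}\lesssim\|v\|_{k,T}$, and the inverse inequality gives $\|v\|_{k,T}\lesssim h^{2-k}\|v\|_{2,T}$. Combining these produces $|w|_{2,T}\lesssim h\|v\|_{2,T}$, $\|\nabla w\|_{0,T}\lesssim h^2\|v\|_{2,T}$, and $|w|_{3,T}\lesssim\|v\|_{2,T}$, so the volume term is controlled directly. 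For the two edge terms I would apply the scaled trace inequality $\|\phi\|_{0,E}\lesssim h^{-1/2}\|\phi\|_{0,T}+h^{1/2}\|\nabla\phi\|_{0,T}$ with $\phi=\partial w/\partial n$ and $\phi=\partial^2 w/\partial n^2$, obtaining $\|\partial w/\partial n\|_{0,E}\lesssim h^{3/2}\|v\|_{2,T}$ and $\|\partial^2 w/\partial n^2\|_{0,E}\lesssim h^{1/2}\|v\|_{2,T}$. Multiplying by the weights $|E|^{-1}\sim h^{-1}$ and $|E|\sim h$ respectively shows both edge contributions are $\lesssim h^2\|v\|_{2,T}^2$, so summing over all $T$ (and over $T\subset G\setminus G_4$ in the refined case, where $w$ is supported) gives the claimed bounds.

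The interpolation, trace, and inverse inequalities are routine; the two points needing care are the cancellation $D^{k+1}(v|_T)=0$ paired with the inverse inequality, which is precisely the mechanism converting the naive $\mathcal O(1)$ energy-norm interpolation error into the extra factor of $h$, and the bookkeeping on the edge terms confirming that the weighted jump and average of the normal derivatives both land at order $h^2\|v\|_{2,T}^2$. I expect this edge-term bookkeeping to be the main obstacle, since it is where the mesh-dependent weights, the trace inequality, and the superapproximation gain must be balanced simultaneously; the volume term and the support/reproduction arguments are comparatively direct.
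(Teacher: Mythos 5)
Your proposal is correct and follows essentially the same route as the paper's proof: the choice $\xi=I_h(\omega v)$, the interpolation and trace inequalities for the mesh-dependent norm, the Leibniz rule with the cancellation $D^{k+1}(v|_T)=0$, and the inverse inequality $\|v\|_{k,T}\lesssim h^{2-k}\|v\|_{2,T}$, plus the same reproduction argument ($\xi\equiv v$ where $\omega\equiv1$ at all nodes) to localize the sum to $G\setminus G_4$. The paper compresses the edge-term bookkeeping into a single global estimate $\enorm{\omega v-\xi}_{2,h,G}^2\lesssim h^{2(k-1)}\sum_{T_i}\|\omega v\|_{k+1,T_i}^2$, whereas you carry it out element by element, but the mechanism is identical.
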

\begin{proof}
Let $\xi=I_h(\omega v_h)$, by the interpolation error estimate and the trace inequality, we have for $h$ small enough that
\begin{equation}\label{ineq:app1-1}
  \enorm{\omega v-\xi}_{2,h,G}^2\lesssim h^{2(k-1)}\sum_{T_i}\|\omega v\|_{k+1,T_i}^2,
\end{equation}
where $\text{supp}\,\omega\subset\cup_i T_i\subset G$. By the fact that $D^{k+1}v=0$, the Leibniz rule and the inverse inequality
\[\|v\|_{k,T_i}\lesssim h^{2-k}\|v\|_{2,T_i},\]
we know the inequality \eqref{ineq:app1} holds by utilizing  \eqref{ineq:app1-1}. For \eqref{ineq:app2}, since $\omega\equiv 1$ on $G_2$, $\xi\equiv v$ on $G_3$ is apparent.
Similar as \eqref{ineq:app1-1}, we arrive at
\begin{equation}\label{ineq:app2-1}
  \enorm{\omega v-\xi}_{2,h,G}^2\lesssim h^{2(k-1)}\sum_{T_i'}\|\omega v\|_{k+1,T_i'}^2,
\end{equation}
with $\text{supp}\,\omega\setminus G_4\subset\cup_i T_i'\subset G$, then \eqref{ineq:app2} follows and which completes the proof.
\end{proof}
\begin{lemma}\label{lem:5.2-lem1}
Let $G_3\subset\subset G_2\subset\subset G_1\subset\subset G\subset\Omega$ be separated by $\kappa h$, and
suppose that $h$ is small enough. then for each $v\in S_h(G)$ there exists a $\xi\in S_h^0(G_1)$ with $\xi\equiv v$ on $G_2$ and
\begin{equation}\label{ineq:5.2-lem1}
  \enorm{v-\xi}_{2,h,G}^\sim\lesssim \enorm v_{2,h,G\setminus G_3}^\sim.
\end{equation}
\end{lemma}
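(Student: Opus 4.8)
The plan is to realize $\xi$ as a discrete cut-off of $v$ and to reduce the bound to the superapproximation estimate already proved in Lemma~\ref{lem:app}. First I would fix a smooth cut-off $\omega\in C_0^\infty(G_1)$ with $\omega\equiv 1$ on a neighborhood of $\overline{G_2}$ and $\operatorname{supp}\omega\subset\subset G_1$, chosen with $|D^\alpha\omega|\lesssim 1$ for $|\alpha|\le k+1$ (possible because the nested spheres are separated by a fixed distance, so that $\omega$ transitions across the order-one-wide region $G_1\setminus G_2$); this bounded size of the cut-off derivatives is what will keep all powers of $h$ balanced below. Set $\xi:=I_h(\omega v)$. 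Then $\xi\in S_h^0(G_1)$ by construction, and since $v\in S_h$ gives $v=I_hv$, on every element lying in $G_2$ we have $\omega\equiv1$, hence $\xi=I_h(\omega v)=I_hv=v$ there, i.e. $\xi\equiv v$ on $G_2$. Writing $v-\xi=I_hv-I_h(\omega v)=I_h\big((1-\omega)v\big)$ and using that $1-\omega$ vanishes on $G_2$ with $G_3\subset\subset G_2$, the nodal values of $v-\xi$ vanish at all nodes of any element contained in $G_3$ (the buffer $G_3\subset\subset G_2$ accommodating the interpolation stencil); thus $v-\xi$ is supported in $G\setminus G_3$, and only that region contributes to $\enorm{v-\xi}_{2,h,G}^\sim$.

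For the estimate I would split $v-\xi=(1-\omega)v+(\omega v-\xi)$ and treat the two pieces separately. The term $\omega v-\xi=\omega v-I_h(\omega v)$ is precisely the interpolation/superapproximation error analyzed in Lemma~\ref{lem:app}: invoking its localized bound \eqref{ineq:app2} (with the nested spheres relabelled so that the support sits in $G\setminus G_3$) yields $\enorm{\omega v-\xi}_{2,h,G}^2\lesssim h^2\sum_{T\subset G\setminus G_3}\|v\|_{2,T}^2$. Since $h<1$ and, by the inverse inequality, $h^2\|v\|_{2,T}^2\lesssim\|D^2v\|_{0,T}^2+\|v\|_{0,T}^2$, this gives $\enorm{\omega v-\xi}_{2,h,G}^\sim\lesssim\enorm{v}_{2,h,G\setminus G_3}^\sim$; the built-in factor $h^{2(k-1)}$ coming from the interpolation error estimate is exactly what cancels the negative powers of $h$ produced by the inverse inequalities, so no power of $h$ survives.

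The remaining piece $(1-\omega)v$ is supported in $G\setminus G_2\subset G\setminus G_3$. Expanding $D^2\big((1-\omega)v\big)=(1-\omega)D^2v-2\nabla\omega\otimes\nabla v-vD^2\omega$, and expanding analogously the edge averages $\avg{\partial^2(\cdot)/\partial n^2}$ and jumps $\jp{\partial(\cdot)/\partial n}$ entering $\enorm{\cdot}_{2,h}^\sim$, every resulting term is a derivative of $v$ of order at most two multiplied by a derivative of $\omega$ of order one. The volume contributions are thus controlled by $\|D^2v\|_{0,G\setminus G_3}+\|\nabla v\|_{0,G\setminus G_3}+\|v\|_{0,G\setminus G_3}$ and the edge contributions by the corresponding traces. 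The principal obstacle is to absorb the genuinely lower-order factor $\|\nabla v\|$ (and the edge traces it generates) into $\enorm{v}_{2,h,G\setminus G_3}^\sim$ without creating a negative power of $h$; I would do this with a piecewise Poincar\'e--Friedrichs/interpolation inequality of the type in \cite{BWZ2004}, which on the fixed, order-one-wide region bounds $\|\nabla v\|_{0}$ by $\|v\|_{0}$ plus the broken Hessian and the jump terms, i.e. by $\enorm{v}_{2,h,G\setminus G_3}^\sim$, together with the trace inequality to pass from volume to the $|E|$-weighted edge norms at the matching scale. This is exactly the step where the order-one separation of the spheres, hence the bounded derivatives of $\omega$, is essential. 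Combining the two pieces gives \eqref{ineq:5.2-lem1}.
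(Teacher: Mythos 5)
Your construction is essentially the paper's own proof. The paper likewise inserts intermediate spheres ($G_2\subset\subset G_2'\subset\subset G_2''\subset\subset G_1$), takes a cut-off $\omega$ equal to $1$ on $G_2'$ and supported in $G_2''$, obtains $\xi=I_h(\omega v)\in S_h^0(G_1)$ with $\xi\equiv v$ on $G_2$ from Lemma \ref{lem:app}, splits $v-\xi=(1-\omega)v+(\omega v-\xi)$, bounds $\omega v-\xi$ by the superapproximation estimate \eqref{ineq:app2}, and concludes by the triangle inequality after asserting in a single line that $\enorm{(1-\omega)v}_{2,h,G}^\sim\lesssim\enorm{v}_{2,h,G\setminus G_3}^\sim$. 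Your product-rule/Poincar\'e--Friedrichs/trace argument for the piece $(1-\omega)v$, and your inverse-inequality conversion of $h^2\sum_T\|v\|_{2,T}^2$ into the $\sim$-norm, are exactly the justifications the paper leaves implicit, so on this level you are more detailed than the paper.

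One point of tension with the statement should be flagged, although it is shared by --- indeed inherited from --- the paper's own proof. The lemma separates the spheres only by $\kappa h$, whereas your cut-off is required to have $h$-independent derivative bounds, which forces order-one separation; you say so explicitly twice. At genuine $\mathcal{O}(h)$ separation one only has $|D^j\omega|\lesssim(\kappa h)^{-j}$, and then both your estimate for $(1-\omega)v$ and the paper's corresponding one-line assertion break down. Worse, the claimed inequality cannot hold with an $h$-independent constant in that regime: take $v\equiv1$; any admissible $\xi$ must fall from $1$ on $G_2$ to $0$ outside $G_1$ across an annulus of width $\mathcal{O}(h)$, which forces broken second derivatives or normal-derivative jumps of size $h^{-2}$ on a set of area $\mathcal{O}(h)$, hence $\enorm{v-\xi}_{2,h,G}^\sim\gtrsim h^{-3/2}$, while $\enorm{v}_{2,h,G\setminus G_3}^\sim=\|1\|_{0,G\setminus G_3}$ stays bounded. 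So what you (and the paper) actually prove is the lemma under order-one separation --- which is the only form ever invoked, since in Lemma \ref{lem:5.2} the spheres involved, such as $\tfrac12G\subset\subset\tfrac34G\subset\subset G$, are separated by fixed fractions of the radius. Your proposal is therefore as correct as the paper's proof, and more transparent about where the separation scale enters.
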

\begin{proof}
Let $G_2'$ and $G_2''$ satisfy $G_2\subset\subset G_2'\subset\subset G_2''\subset\subset G_1$ and set $\omega\in C_0^\infty(G_2'')$ with $\omega\equiv1$ on $G_2'$.
It follows from Lemma \ref{lem:app} that, there exists a $\xi\in S_h^0(G_1)$ with $\xi\equiv v$ on $G_2$ satisfies
\begin{equation}\label{ineq:ineq:5.2-lem1-1}
  \enorm{\omega v-\xi}_{2,h,G}^\sim\lesssim h\enorm v_{2,h,G_1\setminus G_3}^\sim.
\end{equation}
We conclude the result by using $\enorm{(1-\omega)v}_{2,h,G}^\sim\lesssim \enorm v_{2,h,G\setminus G_3}^\sim$ and the triangle inequality.
\end{proof}

In the next lemma, we state the error estimates for C0IP discretization of the problem \eqref{eq:rglBtilde}.
\begin{lemma}\label{lem:c0ip}
Suppose $\varphi\in H^m(G)$, $m\geq0$. Let $u$ be the solution to the problem \eqref{eq:rglBtilde}, and $u_h\in S_h(G)$ be such that
\begin{equation}\label{eq:coip}
  \tilde B_h(u_h,v_h)=(\varphi,v_h)\quad \forall v_h\in S_h(G).
\end{equation}
Then we have
\begin{equation}\label{ineq:engnorm}
  \enorm{u-u_h}_{2,h,G}^\sim\lesssim h^{\min\{k-1,m+2\}}\|u\|_{m+4,G}.
\end{equation}
Furthermore, we have
\begin{equation}\label{ineq:L2}
  \|u-u_h\|_{0,G}\lesssim
  \begin{cases}
    h^2\|u\|_{3,G}, & k=2 \\
    h^{\min\{k+1,m+4\}}\|u\|_{m+4,G}, & k\geq 3.
  \end{cases}
\end{equation}
\end{lemma}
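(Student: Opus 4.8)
The plan is to prove the two estimates separately: first the energy-norm bound \eqref{ineq:engnorm} by a C\'ea-type argument, and then the $L^2$ bound \eqref{ineq:L2} by Aubin--Nitsche duality. For the energy estimate I would first record that $\tilde B_h(\cdot,\cdot)=B_h(\cdot,\cdot)+K(\cdot,\cdot)$ is coercive and continuous with respect to $\enorm{\cdot}_{2,h,G}^\sim$: continuity follows from that of $B_h$ plus Cauchy--Schwarz on $K$, while coercivity follows by combining the semi-norm coercivity $\enorm{v}_{2,h,G}^2\lesssim B_h(v,v)$ (valid on all of $S_h(G)$ for $\gamma$ large, since the null spaces of $B_h$ and of $\enorm{\cdot}_{2,h,G}$ agree) with the lower-order control $K(v,v)=\|v\|_{0,G}^2$ that completes the full $\sim$-norm. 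Since the regularity Lemma~\ref{lem:rglBtilde} gives $u\in H^{m+4}(G)$, the standard consistency computation for the C0IP form yields the Galerkin orthogonality $\tilde B_h(u-u_h,v_h)=0$ for all $v_h\in S_h(G)$. A C\'ea argument then gives $\enorm{u-u_h}_{2,h,G}^\sim\lesssim \inf_{v_h\in S_h(G)}\enorm{u-v_h}_{2,h,G}^\sim$, and taking $v_h=I_hu$ with the interpolation error estimate at $s=\min\{k+1,m+4\}$ produces the factor $h^{s-2}=h^{\min\{k-1,m+2\}}$; bounding $\|u\|_{s,G}\le\|u\|_{m+4,G}$ gives \eqref{ineq:engnorm}.

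For the $L^2$ estimate I would set $e=u-u_h$ and let $w\in H^2(G)$ solve the dual problem $\tilde B(w,v)=(e,v)$ for all $v\in H^2(G)$; by Lemma~\ref{lem:rglBtilde} with right-hand side $e\in L^2(G)$ this yields $w\in H^4(G)$ with $\|w\|_{4,G}\lesssim\|e\|_{0,G}$. Because $w$ is smooth, the consistency identity extends to every continuous, piecewise-$H^2$ test function, in particular to $v=e$ (recall $u\in H^4$ and $u_h\in S_h\subset C^0$, so $e$ has no jump in its trace), giving $\|e\|_{0,G}^2=(e,e)=\tilde B_h(e,w)$. Subtracting $I_hw$ and using Galerkin orthogonality $\tilde B_h(e,I_hw)=0$, continuity yields $\|e\|_{0,G}^2=\tilde B_h(e,w-I_hw)\lesssim \enorm{e}_{2,h,G}^\sim\,\enorm{w-I_hw}_{2,h,G}^\sim$. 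For $k\ge3$ the interpolant gives $\enorm{w-I_hw}_{2,h,G}^\sim\lesssim h^2\|w\|_{4,G}\lesssim h^2\|e\|_{0,G}$, so cancelling one power of $\|e\|_{0,G}$ and using \eqref{ineq:engnorm} gives $\|e\|_{0,G}\lesssim h^{2+\min\{k-1,m+2\}}\|u\|_{m+4,G}=h^{\min\{k+1,m+4\}}\|u\|_{m+4,G}$. For $k=2$ the interpolation of $w$ is capped at $s=k+1=3$, so one only has $\enorm{w-I_hw}_{2,h,G}^\sim\lesssim h\|w\|_{3,G}\lesssim h\|e\|_{0,G}$, which together with $\enorm{e}_{2,h,G}^\sim\lesssim h\|u\|_{3,G}$ produces $\|e\|_{0,G}\lesssim h^2\|u\|_{3,G}$, accounting for the separate case in \eqref{ineq:L2}.

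The step I expect to require the most care is the consistency identity $\tilde B_h(v,w)=(e,v)$ with the \emph{nonsmooth} test function $v=e$, since $e$ is only piecewise $H^2$. I would justify it by elementwise integration by parts: because $w$ is smooth the jump factors $\jp{\partial w/\partial n}$ and the penalty terms vanish, the edge averages $\avg{\partial^2 w/\partial n^2}$ pair with $\jp{\partial v/\partial n}$, and the free-plate (``Neumann'') boundary conditions satisfied by $w$ annihilate the boundary edge contributions, leaving $\tilde B_h(v,w)=\int_G v(\Delta^2 w+w)=(v,e)$. The remaining points are bookkeeping: tracking the minima $\min\{k-1,m+2\}$ and $\min\{k+1,m+4\}$ through the interpolation exponents, and confirming that the regularity constant invoked is the $H^4$-to-$L^2$ one, so that $\|w\|_{4,G}\lesssim\|e\|_{0,G}$ is legitimate. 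Everything else is routine C\'ea and Aubin--Nitsche machinery.
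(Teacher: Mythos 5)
Your proposal is correct and follows essentially the same route as the paper's (sketched) proof: Galerkin orthogonality from consistency, coercivity of $\tilde B_h$ plus a C\'ea argument with the Lagrange interpolant for \eqref{ineq:engnorm}, and Aubin--Nitsche duality with the $H^4$ regularity of Lemma \ref{lem:rglBtilde} for \eqref{ineq:L2}, including the correct case bookkeeping for $k=2$. The one ingredient the paper invokes that you omit is the Sobolev extension operator $\mathfrak E\colon H^\ell(G)\to H^\ell(\mathbb R^2)$: since $\partial G$ cuts through elements of $\mathcal T_h$, the interpolants $I_h u$ and $I_h w$ require nodal values outside $G$, so one interpolates $\mathfrak E u$ over full elements and then reduces the bound to $\|u\|_{m+4,G}$ via $\|\mathfrak E u\|_{m+4,\mathbb R^2}\lesssim\|u\|_{m+4,G}$ --- a technicality, not a flaw in the structure of your argument.
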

\begin{proof}
  The proof of this lemma is standard, we only provide a sketch. Using integration by parts, we have the consistent relation
  \begin{equation}\label{eq:consis}
    \tilde B_h(u-u_h,v_h)=0\quad\forall v_h\in S_h(G).
  \end{equation}
  According to the well known Sobolev extension Theorem, see \cite{AR1975}, there exists an operator $\mathfrak E: H^\ell(G)\to H^\ell(\mathbb R^2)$, $\ell\in \mathbb Z_+$, such that
  \[\mathfrak E v=v\quad\text{ on }G,\quad\text{and}\quad \|\mathfrak Ev\|_{\ell,\mathbb R^2}\lesssim \|v\|_{\ell,G}\]
  for any $v\in H^\ell(G)$. Consequently,  applying the coercivity of $\tilde B(\cdot,\cdot)$ and C\'ea Lemma, we have
  \begin{equation}\label{ineq:cea}
  \begin{split}
    \enorm{u-u_h}_{2,h,G}^\sim&\lesssim \enorm{u-I_hu}_{2,h,G}^\sim \lesssim   \enorm{\mathfrak E u-I_h(\mathfrak Eu)}_{2,h,\mathbb R^2}^\sim  \\
    &\lesssim h^{\min\{k-1,m+2\}}\|\mathfrak Eu\|_{m+4,\mathbb R^2}\lesssim h^{\min\{k-1,m+2\}}\|u\|_{m+4,G},
  \end{split}
  \end{equation}
 and \eqref{ineq:engnorm} is obtained. For the $L^2$ error estimate \eqref{ineq:L2},
based on \eqref{ineq:engnorm}, \eqref{ineq:cea} and consistency \eqref{eq:consis}, an application of the Aubin-Nitsche trick implies  \eqref{ineq:L2}.  The proof is completed.
\end{proof}

\begin{lemma}\label{lem:green}
Let $v\in H^2(G)$ and $\varphi\in L^1(G)$ satisfy
\begin{equation}\label{equ:green}
  \tilde B(v,w)=(\varphi,w)\quad\forall w\in H^2(G).
\end{equation}
 Then  we have
\begin{equation}\label{ineq:green1}
  \|v\|_{2,1,G}\lesssim \|\varphi\|_{0,1,G}.
\end{equation}
\end{lemma}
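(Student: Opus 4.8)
The plan is to represent $v$ through the Green's function of $\tilde B(\cdot,\cdot)$ on the sphere $G$ and to exploit the fact that, in two dimensions, the fundamental solution of the fourth-order operator has only a logarithmic (hence locally integrable) singularity in its second derivatives, so that the endpoint $L^1\to L^1$ bound holds without any Calder\'on--Zygmund machinery.

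First I would construct the kernel. Since we are in $\mathbb R^2$, the Sobolev embedding $H^2(G)\hookrightarrow C^0(\bar G)$ makes the point evaluation $w\mapsto w(y)$ a bounded functional on $H^2(G)$; because $\tilde B(\cdot,\cdot)$ is symmetric and coercive on $H^2(G)$ (the very reason the term $K(\cdot,\cdot)$ was introduced), Lax--Milgram yields a unique $\mathcal G(\cdot,y)\in H^2(G)$ with $\tilde B(w,\mathcal G(\cdot,y))=w(y)$ for all $w\in H^2(G)$, and symmetry of $\tilde B$ gives $\mathcal G(x,y)=\mathcal G(y,x)$. Testing the defining relation $\tilde B(v,w)=(\varphi,w)$ against $w=\mathcal G(\cdot,y)$ and using the reproducing property produces
\[v(x)=\int_G \mathcal G(x,y)\varphi(y)\,dy,\qquad D^\alpha_x v(x)=\int_G D^\alpha_x\mathcal G(x,y)\varphi(y)\,dy\quad(|\alpha|\le 2).\]
By Tonelli's theorem it then suffices to establish the uniform kernel bound $\sup_{y\in G}\int_G |D^\alpha_x\mathcal G(x,y)|\,dx\lesssim 1$ for $|\alpha|\le 2$, since
\[\|D^\alpha v\|_{0,1,G}\le \int_G\Big(\int_G |D^\alpha_x\mathcal G(x,y)|\,dx\Big)|\varphi(y)|\,dy\lesssim \|\varphi\|_{0,1,G},\]
and summing over $|\alpha|\le 2$ gives \eqref{ineq:green1}.

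To verify the kernel bound I would split $\mathcal G(x,y)=\Phi(x-y)+R(x,y)$, where $\Phi(z)=c|z|^2\log|z|$ is the fundamental solution of $\Delta^2$ in $\mathbb R^2$ and $R(\cdot,y)$ is the correction that restores the natural boundary conditions and absorbs the lower-order term $K$. The decisive point is that $|D^2\Phi(z)|\lesssim 1+|\log|z||$ is locally integrable while $D\Phi$ and $\Phi$ are even less singular, so $\int_G|D^\alpha_x\Phi(x-y)|\,dx\lesssim 1$ uniformly in $y$ by a direct computation. For the remainder, $R(\cdot,y)$ solves a fourth-order problem whose data are the (smooth, $y$-dependent) boundary traces of $\Phi(\cdot-y)$ together with the volume term coming from $K$; the elliptic regularity estimate of Lemma \ref{lem:rglBtilde} then bounds $\|R(\cdot,y)\|_{2,G}$, and $\|D^\alpha_x R(\cdot,y)\|_{0,1,G}\lesssim |G|^{1/2}\|R(\cdot,y)\|_{2,G}$. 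The main obstacle will be making this bound on $R$ \emph{uniform in} $y$ up to $\partial G$, where the reflected singularities accumulate as $y\to\partial G$; here I would use the smoothness of the sphere and the fact that the boundary traces of $\Phi(\cdot-y)$ and its derivatives remain controlled (their second derivatives being only logarithmically singular, hence integrable on the one-dimensional curve $\partial G$) to keep the data of $R$ bounded independently of $y$, after which Lemma \ref{lem:rglBtilde} on the fixed domain $G$ closes the argument and the three displays combine to give $\|v\|_{2,1,G}\lesssim\|\varphi\|_{0,1,G}$.
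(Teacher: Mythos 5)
Your proposal follows essentially the same route as the paper's proof: represent $v$ through the Green's function of $\tilde B$ on $G$, observe that the second derivatives of the biharmonic kernel are only logarithmically singular, deduce the uniform kernel bound $\int_G |D_x^\alpha \mathcal G(x,y)|\,dx \lesssim 1$ for $|\alpha|\le 2$, and conclude by Tonelli. The difference is in how that kernel bound is justified. The paper does not prove it: it cites Karachik's explicit Green's function for the Dirichlet biharmonic problem on a ball, records the dominant part $\frac{|x-y|^2}{4}(\ln|x-y|-1)$ and the resulting estimate $|D_x^\alpha \mathrm G(x,y)|\lesssim |\ln|x-y||$, and then explicitly \emph{assumes} this transfers to $\tilde B$ (biharmonic plus the $K$-term, natural boundary conditions). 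You instead attempt a self-contained proof via the splitting $\mathcal G(\cdot,y)=\Phi(\cdot-y)+R(\cdot,y)$; your Lax--Milgram construction of $\mathcal G(\cdot,y)$ and the treatment of the $\Phi$-part are fine, and this is a genuinely more honest plan than a citation-plus-assumption.

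The gap is exactly where you flag it, and your proposed fix does not close it. The correction $R(\cdot,y)$ solves a problem with \emph{inhomogeneous} natural boundary data (the traces needed to cancel those of $\Phi(\cdot-y)$), whereas Lemma \ref{lem:rglBtilde} is stated only for the variational problem with a volume right-hand side and the natural boundary conditions built in; it does not apply to $R$ as written. Moreover, to conclude $\|R(\cdot,y)\|_{2,G}\lesssim 1$ you would need the boundary data controlled in the relevant trace norms (roughly $H^{1/2}(\partial G)$ at the level of second derivatives), and those norms blow up as $\dist(y,\partial G)\to 0$; integrability of the logarithmic singularity on the one-dimensional curve $\partial G$, i.e.\ an $L^1(\partial G)$ bound, is far too weak an input for an $H^2$ regularity estimate. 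What is really needed is a boundary-uniform pointwise Green's-function estimate for the Neumann-type operator $\tilde B$ itself --- precisely the statement the paper assumes without proof. So if you, like the paper, simply assume the kernel estimate, your argument is complete at the same level of rigor; the part where you try to do better than the paper is the part that does not yet work.
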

\begin{proof}
Due to the recent advance about the research on the Green's function of biharmonic operator
 on the ball by Karachik \cite{KVV2019,KVV2021}, we know
the dominated part of the Green's function $\mathrm G(x,y)$ of biharmonic equation is
\[\mathfrak G(x,y)=\frac{|x-y|^2}{4}(\ln|x-y|-1).\]
Therefore \[|D_x^\alpha\mathrm G(x,y)|\lesssim |\ln|x-y||\lesssim \frac{1}{|x-y|} \quad |\alpha|\leq2.\]
We assume this estimate is applicable for our case and note that
\begin{equation}\label{eq:green1-1}
D_x^\alpha v(x)=D_x^\alpha\int_G\mathrm G(x,y)\varphi(y)dy=\int_GD_x^\alpha\mathrm G(x,y)\varphi(y)dy,
\end{equation}
we arrive at
\begin{equation}\label{ineq:green1-1}
  \begin{split}
     \int_G|D_x^\alpha v(x)|dx&\leq\int_G\int_G| D_x^\alpha\mathrm G(x,y)\varphi(y)|dxdy\lesssim\int_G\int_G\frac{|\varphi(y)|}{|x-y|}dxdy\\
     &\lesssim \int_G|\varphi|dy \quad \forall |\alpha|\leq2.
  \end{split}
\end{equation}
The proof is completed.
\end{proof}

Finally, we can provide the proof of Lemma \ref{lem:5.3}.
\begin{proof}[Proof of Lemma \ref{lem:5.3}]
Let $\Omega_j$ denote the annuli
\[\Omega_j=\{x:2^{-j-1}<|x|<2^{-j}\},\]
and let $J$ be the largest integer such that $2^{-2J}\geq C_*h^{\bar k}$, with
\[\bar k=
\begin{cases}
           1, &  k=2 \\
           2, &  k\geq3,
         \end{cases}
\]
and $C_*$  is a large positive constant that will be specified later. Set $d_j=2^{-j}$, $\Omega_h=G\setminus\cup_{j=0}^J\Omega_j$, and let
\[\Omega_j^\ell=G\cap\bigcup_{i=j-\ell}^{j+\ell}\Omega_i,\quad \ell=1,2,\cdots.\]
Furthermore, we denote $e = v - v_h$. We have the basic relation
\begin{equation}\label{eq:5.3-3}
  \enorm{e}_{2,1,h,G}^\sim=\sum_{j=0}^J\enorm{e}^\sim_{2,1,h,\Omega_j}+\enorm{e}^\sim_{2,1,h,\Omega_h}.
\end{equation}
Using the Cauchy-Schwarz inequality and Lemma \ref{lem:c0ip} , it holds that
\begin{equation}\label{ineq:5.3-1}
  \enorm{e}_{2,1,h,\Omega_h}^\sim\leq CC_*^{\frac12}h^{\bar k/2}\enorm e^\sim_{2,h,\Omega_h}\leq CC_*^{\frac12}h^{\frac32\bar k}\|\varphi\|_{0,G_h}.
\end{equation}

Next we focus on the first term in \eqref{eq:5.3-3}. By Lemma \ref{lem:scaling}, we have
\begin{equation}\label{ineq:5.3-2}
  \enorm{e}^\sim_{2,1,h,\Omega_j}\leq Cd_j\enorm{e}^\sim_{2,h,\Omega_j}\leq Cd_j(h^{k-1}\|v\|_{k+1,\Omega_j^1}+d_j^{-3}\|e\|_{0,1,\Omega_j^1}).
\end{equation}

For the term $\|v\|_{k+1,\Omega_j^1}$ in the above inequality, we employ the estimate of the Green's function in the proof of Lemma \ref{lem:green} to obtain
that, for any $x\in \Omega_j^1$ and $3\leq|\alpha|\leq k+1$, we have
\begin{equation}\label{ineq:5.3-3}
\begin{split}
  |D^\alpha v(x)| & \leq\int_{G_h}|D_x^\alpha\mathrm G(x,y)||\varphi(y)|dy \\
   & \leq\left(\int_{G_h}|x-y|^{2(2-|\alpha|)}dy\right)^{\frac12}\|\varphi\|_{0,G_h}\lesssim h d_j^{1-k}\|\varphi\|_{0,G_h}.
\end{split}
\end{equation}
From \eqref{ineq:5.3-3}, we have
\begin{equation}\label{ineq:5.3-4}
\|v\|_{k+1,\Omega_j^1}\lesssim d_j^{2-k}h\|\varphi\|_{0,G_h}.
\end{equation}
Therefore \eqref{ineq:5.3-4} and \eqref{ineq:5.3-2} give that
\begin{equation}\label{ineq:5.3-5}
\begin{split}
  \sum_{j=0}^J\enorm{e}^\sim_{2,1,h,\Omega_j}&\lesssim \Bigg(\sum_{j=0}^Jd_j^{3-k}h^{k}\Bigg)\|\varphi\|_{0,G_h}+
  \sum_{j=0}^Jd_j^{-2}\|e\|_{0,1,\Omega_j^1}\\
  &\lesssim h^3\cdot\mathcal R(h)\|\varphi\|_{0,G_h}+\sum_{j=0}^Jd_j^{-2}\|e\|_{0,1,\Omega_j^1},
  \end{split}
\end{equation}
with
\[\mathcal R(h)=\begin{cases}
  h^{-1}, & k=2 \\
  \ln(1/h), & k=3 \\
  1, & k\geq4.
\end{cases}\]

Next we shall bound \[\Theta:=\sum_{j=0}^Jd_j^{-2}\|e\|_{0,1,\Omega_j^1}\]
in \eqref{ineq:5.3-5}.
Let \[\widetilde\Theta=\frac{1}{C_*h^{\bar k}}\|e\|_{0,1,\Omega_h}+\sum_{j=0}^Jd_j^{-2}\|e\|_{0,1,\Omega_j},\]
and inequality $\Theta\lesssim\widetilde\Theta$ holds. Now  Lemma \ref{lem:c0ip} gives
\begin{equation}\label{ineq:5.3-6}
\frac{1}{C_*h^{\bar k}}\|e\|_{0,1,\Omega_h}\leq Ch^{-\bar k/2}\|e\|_{0,\Omega_h}\leq Ch^{\frac32\bar k}\|\varphi\|_{0,G_h}.
\end{equation}

The main task below is to estimate the second term in $\widetilde\Theta$. For any $\eta\in C_0^\infty(\Omega_j)$, we consider the auxiliary problem: Find  $\psi\in H^2(G)$ such that
\begin{equation}\label{eq:5.3-*}
  \tilde B_h(\psi,w)=(\eta,w)\qquad \forall w\in H^2(G).
\end{equation}
We have for any $\chi\in S_{h}(G)$ that
\begin{equation}\label{ineq:5.3-7}
\tilde B_h(e,\psi-\chi)\lesssim \enorm{e}_{2,1,h,G\setminus\Omega_j^2}^\sim\enorm{\psi-\chi}_{2,\infty,h,G\setminus\Omega_j^2}^\sim+
\enorm{e}_{2,h,\Omega_j^2}^\sim\enorm{\psi-\chi}_{2,h,\Omega_j^2}^\sim.
\end{equation}
Taking $\chi=I_h\psi$, we obtain
\begin{equation}\label{ineq:5.3-8}
\enorm{\psi-\chi}_{2,\infty,h,G\setminus\Omega_j^2}^\sim \lesssim h^{\bar k}\|\psi\|_{\bar k+2,\infty,G\setminus \Omega_j^1}.
\end{equation}
For estimating $\|\psi\|_{\bar k+2,\infty,G\setminus \Omega_j^1}$ in the above inequality. Note that $\eta\in C_0^\infty(\Omega_j)$, then for any $x\in G\setminus \Omega_j^1$,
we derive by employing the Green's function of biharmonic operator that
\begin{equation}\label{ineq:5.3-9}
  \begin{split}
  |D^\alpha\psi(x)|&\leq\int_{\Omega_j}|D_x^\alpha\mathrm G(x,y)||\eta(y)|dy\\
  &\lesssim \|\eta\|_{0,\infty,\Omega_j}\int_{\Omega_j}\frac{1}{|x-y|^2}dy\lesssim \|\eta\|_{0,\infty,\Omega_j}\quad\forall|\alpha|\leq\bar k+2.
  \end{split}
\end{equation}
Thus $\|\psi\|_{\bar k+2,\infty,G\setminus \Omega_j^1}\lesssim \|\eta\|_{0,\infty,\Omega_j}$ and we further arrive at
\begin{equation}\label{ineq:5.3-10}
\enorm{\psi-\chi}_{2,\infty,h,G\setminus\Omega_j^2}^\sim \lesssim h^{\bar k}\|\eta\|_{0,\infty,\Omega_j}.
\end{equation}
Moreover, by Lemma \ref{lem:scaling}, we have
\begin{equation}\label{ineq:5.3-11}
  \enorm{e}_{2,h,\Omega_j^2}^\sim\lesssim h^{k-1}\|v\|_{k+1,\Omega_j^3}+d_j^{-3}\|e\|_{0,1,\Omega_j^3}.
\end{equation}
By the similar derivation of \eqref{ineq:5.3-4}, we have $\|v\|_{k+1,\Omega_j^3}\lesssim d_j^{2-k}h\|\varphi\|_{0,G_h}$, which leads to
\begin{equation}\label{ineq:5.3-12}
  \enorm{e}_{2,h,\Omega_j^2}^\sim\lesssim h^{k}d_j^{2-k}\|\varphi\|_{0,G_h}+d_j^{-3}\|e\|_{0,1,\Omega_j^3}.
\end{equation}
On the other hand, we have
\begin{equation}\label{ineq:5.3-13}
  \enorm{\psi-\chi}_{2,h,\Omega_j^2}^\sim\lesssim h^{\bar k}\|\psi\|_{\bar k+2,G}\lesssim h^{\bar k}\|\eta\|_{0,\Omega_j}\lesssim h^{\bar k}d_j\|\eta\|_{0,\infty,\Omega_j}.
\end{equation}
Therefore, we get from \eqref{ineq:5.3-12} and \eqref{ineq:5.3-13} that
\begin{equation}\label{ineq:5.3-14}
     \enorm{e}_{2,h,\Omega_j^2}^\sim \enorm{\psi-\chi}_{2,h,\Omega_j^2}^\sim \lesssim \Big(h^{k+\bar k}d_j^{3-k}\|\varphi\|_{0,G_h}+h^{\bar k}d_j^{-2}\|e\|_{0,1,\Omega_j^3}\Big)\|\eta\|_{0,\infty,\Omega_j}.
\end{equation}
Consequently, from \eqref{eq:5.3-*}, \eqref{ineq:5.3-7}, \eqref{ineq:5.3-10} and \eqref{ineq:5.3-14}, we arrive at:
\begin{equation}\label{ineq:5.3-15}
  \begin{split}
     \|e\|_{0,1,\Omega_j} & =\sup_{\eta\in C_0^\infty(\Omega_j)}\frac{\tilde B_h(e,\psi-\chi)}{\|\eta\|_{0,\infty,\Omega_j}} \\
       & \lesssim h^{\bar k}\enorm{e}_{2,1,h,G\setminus\Omega_j^2}^\sim+h^{k+\bar k}d_j^{3-k}\|\varphi\|_{0,G_h}+h^{\bar k}d_j^{-2}\|e\|_{0,1,\Omega_j^3}.
  \end{split}
\end{equation}
Now using the definition of $\widetilde\Theta$, estimates \eqref{ineq:5.3-6} and \eqref{ineq:5.3-15}, we have
\begin{equation}\label{ineq:5.3-16}
  \begin{aligned}
    \widetilde\Theta\leq&\, C\Bigg(h^{\frac32\bar k}+h^{k+\bar k}\sum_{j=0}^Jd_j^{1-k}\Bigg)\|\varphi\|_{0,G_h}\\
    &\quad+C\sum_{j=0}^J\frac{h^{\bar k}}{d_j^2}\Big(\enorm{e}_{2,1,h,G\setminus\Omega_j^2}^\sim+d_j^{-2}\|e\|_{0,1,\Omega_j^3}\Big).
  \end{aligned}
\end{equation}
Note that $\frac{h^{\bar k}}{d_j^2}\leq\frac1{C_*}$ and $\frac32\bar k\leq h^{k+\bar k}\sum_{j=0}^Jd_j^{1-k}$, inequality \eqref{ineq:5.3-16} leads us to
\begin{equation}\label{ineq:5.3-17}
    \widetilde\Theta\leq Ch^{\frac32\bar k}\|\varphi\|_{0,G_h}+\frac{C}{C_*}\enorm e_{2,1,h,G}^\sim+\frac{C_1}{C_*}\widetilde\Theta.
\end{equation}
Taking $C_*=2C_1$, there holds that
\begin{equation}\label{ineq:5.3-18}
    \widetilde\Theta\leq Ch^{\frac32\bar k}\|\varphi\|_{0,G_h}+\frac{C}{C_*}\enorm e_{2,1,h,G}^\sim.
\end{equation}
Therefore, we conclude from \eqref{eq:5.3-3}, \eqref{ineq:5.3-1}, \eqref{ineq:5.3-5}, \eqref{ineq:5.3-17} that 
\begin{equation}\label{ineq:5.3-19}
    \enorm e_{2,1,h,D}^\sim\leq C\Big(C_*^{\frac12}h^{\frac32\bar k}+h^3\cdot\mathcal R(h)\Big)\|\varphi\|_{0,G_h}+\frac{C_2}{C_*}\enorm e_{2,1,h,G}^\sim.
\end{equation}
Setting $\Lambda(h)=h^{\frac32\bar k}+h^3\cdot\mathcal R(h)$ and choosing $C_*=\max\{2C_1,2C_2\}$ in \eqref{ineq:5.3-19}, we finally obtain our desired result.
\end{proof}

\end{document}